\newtheorem{remark}{Remark}[section]
  \newtheorem{assumption}{Assumption}[section]
\renewcommand{\d}{\mathrm{d}}
\newcommand{\R}{\mathbb{R}}
\newcommand{\fy}{\varphi}
\def\II{(\Omega)}
\def\al{\alpha}
\def\Dal{\partial_t^\alpha}
\def\bDal{\bar \partial_\tau}
\def\T{\mathcal{T}}
\def\Q{\mathcal{Q}}
\def\DK{\mathcal{D}(K)}
\def\Ih{\mathcal{I}_h}
\begin{document}

\title{Identification of potential in diffusion equations from terminal observation: analysis and discrete approximation}

\author{Zhengqi Zhang\thanks{Department of Applied Mathematics, The Hong Kong Polytechnic University, Kowloon, Hong Kong.
Email address: \texttt{zhengqi.zhang@connect.polyu.hk}}
\and Zhidong Zhang\thanks{School of Mathematics (Zhuhai), Sun Yat-sen University, Zhuhai 519082, Guangdong, China.
Email address: \texttt{zhangzhidong@mail.sysu.edu.cn}}
\and Zhi Zhou\thanks{Department of Applied Mathematics, The Hong Kong Polytechnic University, Kowloon, Hong Kong.
Email address: \texttt{zhizhou@polyu.edu.hk}}}
\date{\today}

\maketitle
\begin{abstract}
The aim of this paper is to study  the recovery of a spatially dependent potential in a (sub)diffusion equation from overposed final time data. We construct a monotone operator one of whose fixed points is the unknown potential. The uniqueness of the identification is theoretically verified by using the monotonicity of the operator and a fixed point argument. Moreover, we show a  conditional stability in Hilbert spaces under some suitable conditions on the problem data. Next, a completely discrete scheme is developed, by using Galerkin finite element method in space and finite difference method in time, and then a fixed point iteration is applied to reconstruct the potential. We prove the linear convergence of the iterative algorithm by the contraction mapping theorem, and present a thorough error analysis for the reconstructed potential. Our derived \textsl{a priori} error estimate provides a guideline to choose discretization parameters according to the noise level. The analysis relies heavily on some suitable nonstandard error estimates for the direct problem as well as the aforementioned conditional stability. Numerical experiments are provided to illustrate and complement our theoretical analysis.
\\

{\bf Keywords:}  inverse potential problem, parameter identification, terminal observation, conditional stability, iterative algorithm, error estimate.\\


\end{abstract}

\setlength\abovedisplayskip{3.5pt}
\setlength\belowdisplayskip{3.5pt}

\section{Introduction}
This work is concerned with an inverse potential problem for the diffusion model with a space-dependent
potential and its rigorous numerical analysis. Let $\Omega\subset\mathbb{R}^d$ ($d=1,2,3$) be a convex
polyhedral domain with a boundary $\partial\Omega$. Fixing $T>0$ as the final time, we  consider the following  initial-boundary  value problem for the  diffusion model with $\alpha\in(0,1]$:
 \begin{equation}\label{eqn:pde}
 \begin{cases}
  \begin{aligned}
     \Dal u(x,t) - \Delta u(x,t) +q(x) u(x,t)&=f(x), &&(x,t)\in \Omega\times(0,T],\\
    u(x,t)&=b(x),&&(x,t)\in \partial\Omega\times(0,T],\\
    u(x,0)&=v(x),&&x\in\Omega,
  \end{aligned}
  \end{cases}
 \end{equation}
where $v$ denotes the initial condition, $b$ and $f$ are space-dependent boundary data and source term, respectively .
The function $q$ refers to the radiativity or
reaction coefficient or potential in the standard parabolic case ($\alpha=1$), dependent of the specific applications.
Throughout, we assume that the potential $q$ is space-dependent.

The notation $\Dal u$ denotes the conventional first-order derivative when $\alpha=1$, and the Djrbashian-Caputo fractional derivative in time $t$ for
$\alpha\in(0,1)$ \cite[p. 92]{KilbasSrivastavaTrujillo:2006}, namely, 
\begin{equation*}
  \Dal u (t) =
 \begin{cases}
  \begin{aligned}
  &\partial_t u(t),&&\text{for}~~\alpha=1;\\
  &\frac{1}{\Gamma(1-\alpha)}\int_0^t (t-s)^{-\alpha}  u'(s)\ {\rm d}s,&&\text{for}~~\alpha\in(0,1);
  \end{aligned}
  \end{cases}
\end{equation*}
where $\Gamma(z)=\int_0^\infty s^{z-1}e^{-s}\d s$ (for $\Re(z)>0$) denotes Euler's Gamma function.
The fractional derivative $\partial_t^\alpha u$ recovers the usual first order derivative $u'$
as the order $\alpha\to1^-$ for a sufficiently smooth function $u$.
The model \eqref{eqn:pde} with $\alpha\in(0,1)$ has been drawing increasing
attention over the past several decades,
due to the extraordinary capability of the model for describing anomalously slow diffusion processes,
also known as subdiffusion. At a microscopical level, it
can be described by continuous time random walk, where the waiting time distribution between consecutive jumps is heavy
tailed with a divergent mean, in a manner similar to Brownian motion for the standard diffusion equation ($\alpha=1$).
The model \eqref{eqn:pde} can be viewed as the governing equation for the probability density function of the particle
appearing at certain time instance $t$ and space location $x$.  It has found many applications in physics, biology and finance etc.
One may consult the review \cite{MetzlerKlafter:2000} for physical motivation and an extensive list of applications.

In this work, we study the following \textbf{inverse potential problem} for the (sub)diffusion model \eqref{eqn:pde}:
setting appropriate problem data $v, f, b$ and measuring the final time data $g(x):=u(x,T;q^\dag)$,
then we aim to recover the unknown potential term $q^\dag(x)\in L^\infty(\Omega)$ such that
\begin{equation*}
   u(x,T; q^\dag)=g(x)\quad \mbox{in }\Omega.
\end{equation*}
Here we denote the solution corresponding to the potential $q$ by $u(x,t;q)$.
We also consider the numerical reconstruction from a noisy data
\begin{equation*}
    g_\delta(x) = u(x,T; q^\dag) + \xi(x) \quad \mbox{in }\Omega,
\end{equation*}
and $\xi$ denotes the measurement noise.
The accuracy of the observational data $g_\delta$ is measured by the noise level $\|g_\delta - g\|_{C(\overline\Omega)} = \delta$.
This inverse potential problem arises in many practical applications, where
$q^\dag$ represents the radiativity coefficient in heat conduction \cite{YZ:2001} and perfusion coefficient in Pennes' bio-heat
equation in human physiology \cite{Pennes:1948}.

The theoretical analysis of inverse potential problem in diffusion equation from final time observational data has a long history, see e.g, \cite{Isakov:1991,
ChoulliYamamoto:1996,ChoulliYamamoto:1997,ChenJiangZou:2020,KlibanovLiZhang:2020} and the references therein.
In \cite{Isakov:1991} Isakov showed the uniqueness and (conditional) existence of the inverse potential problem for parabolic equations,
by developing a unique continuation principle and a constructive fixed point iteration. A similar strategy was then adopted in \cite{ZhangZhou:2017}
by Zhang and Zhou for a one-dimensional time-fractional subdiffusion model. Using the spectrum perturbation argument
(\cite[Lemma 2.2]{ZhangZhou:2017} and \cite{Trubowitz:1987}) they proved that the fixed point iteration
is a contraction, from which the uniqueness and existence followed immediately.
Choulli and Yamamoto  proved a generic
well-posedness result in a H\"older space \cite{ChoulliYamamoto:1996}, and then proved a conditional stability result in a Hilbert space setting \cite{ChoulliYamamoto:1997}
for sufficiently small $T$. By using refined
properties of two-parameter Mittag--Leffler functions, e.g., complete monotonicity and asymptotics, a similar result was proved in \cite{JinZhou:IP2021-a}
for the case that $\alpha\in(0,1)$.
Kaltenbacher and Rundell \cite{KaltenbacherRundell:2019} proved the invertibility
of the linearized map (of the direct problem) from the space $L^2(\Omega)$ to $H^2(\Omega)$ under the condition $u_0>0$ in $\Omega$
and $q\in L^\infty(\Omega)$ using a Paley-Wiener type result and a type of strong maximum principle.
In \cite{KR:2020}, they studied the recovery of several parameters simultaneously from overposed data consisting of $u(T)$.
Chen et al \cite{ChenJiangZou:2020} considered the observational data in $[T_0, T_1]\times\Omega$ for the parabolic equation,
and proved conditional stability of the inverse problem in negative Sobolev spaces.
Most recently, Jin et al \cite{JLQZ:2021} used the same observational data
and showed a weighted $L^2$ stability which  leads  to a H\"older type stability
in the standard $L^2$ norm  under a positivity condition.
We also refer interested readers to \cite{KianYamamoto:2019,MillerYamamoto:2013,KR:2020-b}
and references therein for the inverse potential problem for (sub)diffusion models from different types of observational data.

In this work, we construct an operator $K$ from the PDE \eqref{eqn:pde} as follows:
\begin{equation*}
 K\psi(x)=\frac{f(x)-\Dal u(x,T;\psi)+\Delta g(x)}{g(x)}.
\end{equation*}
From the observational data $g(x):=u(x,T;q)$, we see that the exact potential $q^\dag$ is one of the
fixed points of $K$.
We show the monotonicity of $K$ and use it to construct a decreasing sequence converging to one fixed point.
With this monotone sequence, we prove that there is at most one fixed point,
which immediately leads to the uniqueness result of the inverse problem (Theorem \ref{thm:uniqueness2}).
Besides, this argument also deduces a simple reconstruction algorithm.
Noting that such the operator $K$ has been considered in \cite{Isakov:1991, ZhangZhou:2017},
but the argument is substantially different.
For instance, in \cite{Isakov:1991}, the proof of uniqueness relied on a unique continuation
result of the solution $u$, while the proof in \cite{ZhangZhou:2017} used some inverse spectral estimates,
which are only valid in the one-dimensional case (cf. \cite[Lemma 2.2]{ZhangZhou:2017}).
In this work, our analysis mainly relies on the monotonicity of the operator $K$,
which works for convex polyhedral domains in higher dimensions.
This novel argument also provides the feasibility of applying the approach in other PDE models.
Moreover, under some conditions on problem data, we show a Lipschitz-type stability  in Hilbert spaces (Theorem \ref{thm:cond-stab})
$$ \| q_1 - q_2  \|_{L^2\II} \le C \| u(T;q_1) - u(T;q_2) \|_{H^2\II},\qquad \text{for all}~~ q_1, q_2 \in \Q. $$
The proof relies heavily on the smoothing properties and asymptotics of solution operators.
This conditional stability plays an essential role in the numerical analysis of our reconstruction algorithm with fully discretization in space and time.

The ill-posed nature of inverse potential problems usually poses big
challenges to construct accurate and stable numerical approximations.
Regularization, especially Tikhonov regularization, is
designed to overcome the ill-posed nature \cite{EnglKunischNeubauer:1989,YZ:2001,DengYuYang:2008,YangYuDeng:2008}.
In practical computation, one still needs to discretize the continuous regularized formulation
and hence introduces the discretization error.
See \cite{YZ:2001} for the convergence of
the discrete approximations in the parabolic case. However, the convergence rates of discrete approximations
are generally very challenging to obtain, due to the strong nonconvexity of the regularized functional, which
itself stems from the high degree nonlinearity of the parameter-to-state map.
So far there have been only very few error bounds on discrete approximations, even though
an optimal \textsl{a priori} estimate provides a useful guideline to choose suitable discretization parameters according to the noise level.
See \cite{JLQZ:2021} for an $L^2$ estimate under a positivity condition,
where the observational data is required to be known in $[T-\sigma, T]\times\Omega$ for some positive parameter $\sigma$.
Moreover, in case that $\alpha\in(0,1)$, due to the presence of the nonlocal fractional differential operator,
the subdiffusion model \eqref{eqn:pde} differs considerably from the normal diffusion problem. For example,
many powerful tools, e.g. energy argument and integration by parts formula, are not directly applicable,
and the solution has only limited spatial and temporal regularity, even for smooth problem data.
Both of them often result in additional difficulties to the mathematical and numerical analysis for both direct and inverse problems.
See a related inverse conductivity problem in \cite{Wang2010ErrorEO} and \cite{JinZhou:SICON} respectively for normal diffusion and subdiffusion model,
where the error estimate requires the observational data in $(0,T]\times\Omega$.

In this work, we discretize the continuous problem \eqref{eqn:pde} by using Galerkin finite element method
with conforming piecewise bilinear finite elements in space and backward Euler method in time for $\alpha=1$.
In case that $\alpha\in(0,1)$, we apply the convolution quadrature generated by backward Euler method for the time discretization.
To numerically reconstruct the potential from the noisy observation, we develop a constructive iteration
and show that it generates a sequence
linearly converging to a fixed point $q^*$, provided that $T$ is relatively large.
Besides, we show the following \textsl{a priori} error estimate for any parameter $\epsilon \in (0, \min(1,2-\frac{d}{2}))$ (Theorem \ref{thm:err-fully})
\begin{align*}
\| q^\dag - q^* \|_{L^2\II} &\le   \frac{c}{1- c T^{{-(1-\epsilon)\alpha}}} \Big(\frac{\delta}{h^2} + h + \tau \Big) \le c\Big(\frac{\delta}{h^2} + h+ \tau\Big)
\end{align*}
if $c T^{-(1-\epsilon)\alpha}\le c_0<1$ for some constant $c_0$. Here $h$ and $\tau$  denote the space mesh size and time step size respectively.
This \textsl{a priori} error estimate provides guidelines to choose discretization parameters $h$ and $\tau$ according to the noise level $\delta$.
For example, the choice $\tau = h = O(\delta^{\frac13})$
leads to a best convergence rate $O(\delta^{\frac13})$. This is fully supported by our numerical results in Section \ref{sec:numerics}.
Note that at the continuous level with exact data, the iteration converges without any requirement on the terminal time $T$ (Theorem \ref{thm:uniqueness2}).
However, at the discrete level with noisy data, our theory indicates that the accuracy of the numerical reconstruction requires that $T$ cannot be too small.
The necessity of this requirement on $T$ is supported by our numerical experiments.
In Figure \ref{fig:1D:err-ite}, we observe that for a small $T$, the iteration might converge to a limit far away from the exact potential.
Our analysis relies heavily on some nonstandard error estimates (in terms of data regularity) for the
direct problem as well as the aforementioned conditional stability.
The argument works for both normal diffusion equations ($\alpha=1$) and the subdiffusion equations ($0<\alpha<1$).

The rest of the paper is organized as follows. In Section \ref{sec:uni}, we provide some preliminary results
and show the uniqueness of the inverse potential problem by constructing a monotone fixed point iteration.
Then in Section \ref{sec:stability}, we prove a conditional stability of the inverse problem in Hilbert spaces by
using the smoothing properties and asymptotics of solution operators.
The numerical reconstruction with fully discretization is developed and analyzed in Section \ref{sec:fully},
where we show the linear convergence of the iterative algorithm and
establish \textsl{a priori} error estimates (in terms of discretization parameters and noise level) for the reconstructed potential.
Finally, in Section \ref{sec:numerics}, we present illustrative one- and two-dimensional numerical results to complement the analysis.

Now we conclude with some useful notations.
For any $k\geq 0$ and $p\geq1$, the
space $W^{k,p}(\Omega)$ denotes the standard Sobolev spaces of the $k$th order, and we write $H^k(\Omega)$,
when $p=2$. The notation $(\cdot,\cdot)$ denotes the $L^2(\Omega)$ inner product.
We use the Bochner spaces $W^{k,p}(0,T;B)$ etc, with $B$ being a
Banach space. Throughout, the notations $c$ and $C$, with or without a subscript, denote generic constants
which may change at each occurrence, but they are  always independent of space mesh size $h$, time step size $\tau$ and noise level $\delta$.

\section{Unique identification by the monotone iteration}\label{sec:uni}
The aim of this section is to investigate the uniqueness of the inverse potential problem.
Our approach is to propose a monotone operator which generates a pointwise
decreasing sequence converging to the exact potential.

To begin with, we collect some preliminary setting for the controllable
conditions $v ,b, f$, and the (unknown) exact potential $q^\dag$. Throughout,
we assume that the exact potential  $q^\dag$ belongs to the admissible set
\begin{equation}\label{admissible_q}
 q^\dag \in \mathcal Q:=\{\psi\in C(\overline \Omega): 0\le  \psi\le M_1\}.
\end{equation}

Now we recall the maximum principle for the diffusion model \eqref{eqn:pde}.
See \cite{Friedman:1958} for the normal diffusion, \cite{LuchkoYamamoto:2017} and \cite[Section 6.5]{Jin:2021book} for the subdiffusion.

\begin{lemma}\label{positive}
Let $q\in \mathcal{Q}$, $v, f \in L^2\II$ and $b\in H^{\frac32}(\partial\Omega)$ with $v, f, b \ge 0$ a.e. in $\Omega$.
Then the solution $u$ to  equation  \eqref{eqn:pde} satisfies $u \ge 0$ a.e. in $(0,T)\times \Omega$.
Moreover, if $v,b > 0$, then   $u > 0$ in $(0,T)\times \Omega$.
\end{lemma}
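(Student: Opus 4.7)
The plan is to reduce both assertions to the weak and strong maximum principles for the (sub)diffusion operator cited in the statement, handling $\alpha=1$ and $\alpha\in(0,1)$ in parallel.

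For non-negativity, the cleanest self-contained path at the PDE level is a Stampacchia-type energy argument. Testing \eqref{eqn:pde} against $-u^-$, where $u^-:=\max(-u,0)$, and noting that $u^-|_{\partial\Omega}=0$ (because $b\ge 0$) together with $u^-(0)=0$ (because $v\ge 0$), integration by parts yields
\begin{equation*}
(\Dal u,-u^-) + \|\nabla u^-(t)\|_{L^2\II}^2 + \int_\Omega q\,(u^-)^2\,\d x \le - \int_\Omega f\,u^-\,\d x \le 0.
\end{equation*}
Using the classical chain rule for $\alpha=1$ and an Alikhanov-type convexity inequality $(\Dal u,-u^-)\ge \tfrac12\Dal\|u^-(t)\|_{L^2\II}^2$ for $\alpha\in(0,1)$, one obtains a (fractional) differential inequality in $\|u^-(t)\|_{L^2\II}^2$ with zero initial datum; a Gr\"onwall or Mittag--Leffler comparison argument then gives $u^-\equiv 0$, i.e.\ $u\ge 0$. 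An equivalent, more direct route is to decompose $u=u_v+u_b+u_f$ by linearity and to apply the weak maximum principles of Friedman \cite{Friedman:1958} for $\alpha=1$ and of Luchko--Yamamoto \cite{LuchkoYamamoto:2017} (see also \cite[Section 6.5]{Jin:2021book}) for $\alpha\in(0,1)$ to each summand, after a mollification of the data $v,f$ to satisfy the regularity required by those references and a passage to the limit using linear continuous dependence.

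For the strict positivity under the strengthened hypotheses $v,b>0$, I would invoke the strong maximum principle: classical for $\alpha=1$, and for $\alpha\in(0,1)$ the fractional version in \cite{LuchkoYamamoto:2017}, which precludes an interior zero of a non-negative solution unless it vanishes identically—an alternative ruled out here by the strict positivity of the data. The main obstacle to a fully self-contained treatment is precisely this strong principle in the subdiffusion regime, since such parabolic tools as Hopf's lemma and the Harnack inequality do not transfer verbatim to the nonlocal-in-time operator $\Dal$. Consequently, I would cite the available fractional strong maximum principle rather than re-derive it, and reserve the energy argument above for the non-negativity portion where it is most transparent.
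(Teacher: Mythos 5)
Your proposal is correct in substance, but it is worth noting that the paper does not actually prove this lemma at all: it is stated as a known maximum principle with citations to \cite{Friedman:1958} for $\alpha=1$ and to \cite{LuchkoYamamoto:2017} and \cite[Section 6.5]{Jin:2021book} for $\alpha\in(0,1)$. You therefore go further than the paper on the non-negativity half, by offering a self-contained Stampacchia-type energy argument: testing with $-u^-$, using the convexity inequality $(\Dal u,-u^-)\ge\tfrac12\Dal\|u^-(t)\|_{L^2\II}^2$, and closing with the Caputo extremum/comparison principle to conclude $u^-\equiv0$. This route is sound and has the advantage of working uniformly in the dimension and for $q\in\mathcal Q$ (where $q\ge0$ makes the zeroth-order term harmless), but it carries two technical caveats you should acknowledge: (i) with only $v,f\in L^2\II$ the solution has limited temporal regularity near $t=0$ (typically $\partial_t u\sim t^{\alpha-1}$), so the fractional chain-rule inequality and the admissibility of $-u^-$ as a test function require either a density/mollification argument or a statement of the inequality valid for such solutions; (ii) the step from $\Dal\|u^-(t)\|^2\le0$ and $\|u^-(0)\|=0$ to $u^-\equiv0$ should be justified via the quantitative form of the Caputo extremum principle, $\Dal y(t_0)\ge\frac{t_0^{-\alpha}}{\Gamma(1-\alpha)}(y(t_0)-y(0))$ at a maximizer $t_0$. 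For the strict positivity you defer, exactly as the paper does, to the strong maximum principle in the cited references; this is the appropriate choice, since no elementary replacement for Hopf-type arguments is available in the nonlocal-in-time setting. In short, your argument is a legitimate and somewhat more explicit alternative to the paper's pure citation, and your alternative decomposition-plus-weak-maximum-principle route coincides with what the paper implicitly intends.
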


Now we present the solution representation of the initial-boundary value problem \eqref{eqn:pde}.
For the simplicity of notations, we let $I$ be the identity operator, and  $A(q)$ be the realization of $ -\Delta + q I$   with the homogeneous Dirichlet boundary condition with the domain
$ \text{Dom}(A(q)) = \{ \psi \in H_0^1\II:\, A(q)  \psi \in L^2\II \}=H_0^1\II\cap H^2\II $.
If $q\in\mathcal Q$, for any $ \psi \in H_0^1\II\cap H^2\II$, the full elliptic regularity implies
(see e.g.  \cite[Lemma 2.1]{LiSun:2017} and \cite[Theorems 3.3 and 3.4]{GruterWidman:1982})
\begin{equation}\label{eqn:equiv-n}
c_1\|  \psi  \|_{H^2\II} \le \| A(q)  \psi  \|_{L^2\II} + \|  \psi  \|_{L^2\II} \le c_2\|  \psi  \|_{H^2\II}
\end{equation}
with constants $c_1$ and $c_2$ independent of $q$.

Let $D(q)$ be the Dirichlet map by $\phi=D(q)\psi$
with $\phi$ satisfying
$$ -\Delta \phi + q \phi = 0 ~~\text{in } \Omega ~~\text{and}~~ \phi = \psi ~~\text{in } \partial\Omega. $$
In particular, for any $q\in \Q$, there exists a constant $c$ independent of $q$ such that
\begin{equation}\label{eqn:Dqb2}
\| D(q) \psi \|_{H^2\II} \le C \|  \psi \|_{H^{\frac{3}{2}}(\partial\Omega)}\qquad \text{for all}~~ \psi\in H^{\frac{3}{2}}(\partial\Omega).
\end{equation}
This is a direct result of the regularity of the Dirichlet operator $D(0)$ \cite[(1.2.2)]{Lasiecka:1980} and a simple shift argument.

Then the solution $u$ of problem \eqref{eqn:pde} could be represented by \cite[eq. (2.2)]{Lasiecka:1980}
\begin{equation}\label{eqn:sol-rep}
\begin{aligned}
u(t) &= F(t;q)v + A(q) \int_0^t E(s;q)D(q)b \d s +   \int_0^t E(s;q)f  \d s \\
&= F(t;q)v + (I-F(t;q))D(q)b +(I-F(t;q)) A(q)^{-1}f,
\end{aligned}
\end{equation}
where the operators  $F(t;q)$ and $E(t;q)$ are defined by  \cite[eq. (6.25) and (6.26)]{Jin:2021book}
\begin{equation}\label{eqn:sol-op-1}
 \begin{aligned}
 F(t;q) = \frac{1}{2\pi\mathrm{i}} \int_{\Gamma_{\theta,\kappa}} e^{zt} z^{\alpha-1} (z^\alpha
 + A(q))^{-1}  \,\d z~~\text{and}~~
E(t;q) = \frac{1}{2\pi\mathrm{i}} \int_{\Gamma_{\theta,\kappa}} e^{zt} (z^\alpha +A(q))^{-1}\,\d z,
  \end{aligned}
\end{equation}
respectively.
Here  $\Gamma_{\theta,\kappa}$ denotes the integral contour in the complex plane $\mathbb{C}$ oriented counterclockwise, defined by
$
\Gamma_{\theta,\kappa} =\{z \in \mathbb{C}: |z| =  \kappa , |\arg z|\le \theta\}
\cup \{ z\in \mathbb{C}: z = \kappa e^{\pm i\theta},\rho\ge \kappa\},
$
with $ \kappa\ge 0$ and $\theta \in (\frac{\pi}{2}, \pi)$.
Throughout, we fix $\theta \in(\frac{\pi}{2},\pi)$ so that $z^{\al} \in \Sigma_{\al\theta}
\subset \Sigma_{\theta}:=\{0\neq z\in\mathbb{C}: {\rm arg}(z)\leq\theta\},$ for all $z\in\Sigma_{\theta}$.
Note that $E(t;q)=-A(q)\frac{d}{dt}F(t;q)$, and in case that $\alpha=1$ there holds $F(t;q)=E(t;q)$.

The next lemma gives smoothing properties and asymptotics of  $F(t;q)$ and $E(t;q)$. The proof follows from
the resolvent estimate (for any $q\in \Q$) \cite[Example 3.7.5 and Theorem 3.7.11]{ArendtBattyHieber:2011}:
\begin{equation} \label{eqn:resol}
  \| (z+A(q))^{-1} \|\le c_\phi (|z|^{-1},\lambda^{-1})   \quad \forall z \in \Sigma_{\phi},
  \,\,\,\forall\,\phi\in(0,\pi) ,
\end{equation}
where $\|\cdot\|$ denotes the operator norm from $L^2(\Omega)$ to $L^2(\Omega)$,
and $\lambda$ denotes the smallest eigenvalue of $-\Delta$ with homogeneous Dirichlet boundary condition.
In case that $q\in\Q$, the constant $c_\phi$ can be chosen independent of $q$.
The full proof of the following lemma has been given in \cite[Theorems 6.4 and 3.2]{Jin:2021book}.
\begin{lemma}\label{lem:op}
Let $\lambda$ be the smallest eigenvalue of $-\Delta$ with homogeneous boundary condition.
Let $F(t;q)$ and $E(t;q)$ be the solution operators defined in \eqref{eqn:sol-op-1} with potential coefficient $q\in\mathcal{Q}$.
Then they satisfy the following properties:
\begin{itemize}
\item[$\rm(i)$] $\|A(q) F (t;q)v\|_{L^2\II} + t^{\alpha-1} \| A(q) E (t;q)v  \| \le c  t^{-\alpha} \|v\|_{L^2\II},\quad\forall\,t\in(0,T]$;
\item[$\rm(ii)$] $\|F(t;q)v\|_{L^2\II}+ t^{1-\alpha}\|E(t;q)v\|_{L^2\II} \le c \min(1, \lambda^{-1} t^{-\alpha}) \|v\|_{L^2\II}  , \quad\forall\,t\in(0,T]$,
\end{itemize}
where the constants are independent of $q$ and $t$.
\end{lemma}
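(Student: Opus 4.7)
The plan is to exploit the Dunford calculus representations \eqref{eqn:sol-op-1} together with the uniform resolvent bound \eqref{eqn:resol}, following a standard contour-integration template. The key feature is that the constant $c_\phi$ in \eqref{eqn:resol} can be chosen independently of $q\in\Q$ (a zero-order perturbation of $-\Delta$ by a bounded potential preserves the sectoriality constants up to a constant depending only on $M_1$), so all the resulting estimates inherit this uniformity. Throughout, I would parametrize $\Gamma_{\theta,\kappa}$ with the $t$-adapted choice $\kappa=1/t$: on the two rays $z=\rho e^{\pm\mathrm{i}\theta}$ with $\rho\ge1/t$ one has $|e^{zt}|=e^{\rho t\cos\theta}$ decaying exponentially since $\cos\theta<0$, and on the arc $|z|=1/t$ one has $|e^{zt}|\le e$ with arclength $O(1/t)$.

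For part (i), I would use the algebraic identity $A(q)(z^\alpha+A(q))^{-1}=I-z^\alpha(z^\alpha+A(q))^{-1}$. Combined with \eqref{eqn:resol} applied at $z^\alpha\in\Sigma_{\alpha\theta}$, this gives the uniform bounds $\|A(q)(z^\alpha+A(q))^{-1}\|\le c$ and $\|z^\alpha(z^\alpha+A(q))^{-1}\|\le c$ for $z$ on the contour. Substituting into
\[
A(q)F(t;q)v=\frac{1}{2\pi\mathrm{i}}\int_{\Gamma_{\theta,1/t}}e^{zt}z^{\alpha-1}A(q)(z^\alpha+A(q))^{-1}v\,\mathrm{d}z,
\]
and using $\int_{\Gamma}e^{zt}\,\mathrm{d}z=0$ by Cauchy's theorem to replace $A(q)(z^\alpha+A(q))^{-1}$ by $-z^\alpha(z^\alpha+A(q))^{-1}$ in the analogous expression for $A(q)E(t;q)v$, both quantities are controlled by scalar integrals of the form $\int_{\Gamma}|e^{zt}||z|^{\alpha-1}\,|\mathrm{d}z|$. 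With the substitution $s=\rho t$ on the rays, this integral is seen to be $O(t^{-\alpha})$; the arc gives the same scaling.

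For part (ii), the idea is to play the two facets of \eqref{eqn:resol} against each other: $\|(z^\alpha+A(q))^{-1}\|\le c|z|^{-\alpha}$ on one hand, and $\|(z^\alpha+A(q))^{-1}\|\le c\lambda^{-1}$ on the other (the latter because $0$ lies outside the spectrum of $A(q)$ at distance $\ge\lambda$ uniformly in $q\ge0$). Inserting the first bound into the integrals representing $F(t;q)v$ and $E(t;q)v$ and running the same contour estimate yields the unconditional bounds $\|F(t;q)v\|_{L^2}\le c\|v\|_{L^2}$ and $\|E(t;q)v\|_{L^2}\le ct^{\alpha-1}\|v\|_{L^2}$. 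Inserting instead the second bound produces a factor $\lambda^{-1}$ outside and leaves the integrand $|z|^{\alpha-1}$ for $F$ (and $1$ for $E$), whose contour estimate delivers the sharper $c\lambda^{-1}t^{-\alpha}$ scaling. Taking the minimum of the two yields (ii).

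The main bookkeeping obstacle is confirming that the resolvent constant in \eqref{eqn:resol} is genuinely uniform in $q\in\Q$; once this is established (either by a direct perturbation argument from the $q=0$ case or by invoking \cite{ArendtBattyHieber:2011}), the rest is routine scalar integration and no further difficulty arises. The computations are carried out in full in \cite[Theorems~6.4 and 3.2]{Jin:2021book}, so I would simply cite those and outline the contour choice.
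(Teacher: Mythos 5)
Your proposal is correct and follows essentially the same route as the paper, which itself only records the uniform-in-$q$ resolvent estimate \eqref{eqn:resol} and defers the contour computations to \cite[Theorems 6.4 and 3.2]{Jin:2021book}; your sketch of the $\kappa=1/t$ contour, the identity $A(q)(z^\alpha+A(q))^{-1}=I-z^\alpha(z^\alpha+A(q))^{-1}$, and the interplay of the two facets of the resolvent bound is exactly the standard argument being cited. No gaps.
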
\vskip5pt



We also need the following assumption on the problem data.
\begin{assumption}\label{ass:cond-1}
Let the initial data $v$, the boundary data $b$ and the source term $f$ satisfy the following conditions:
\begin{itemize}
\item[(i)] $v \in H^2\II$, $v \ge M_2 >0$ in $\Omega$,  $v(x) = b(x)$ for all $x \in \partial\Omega$;
\item[(ii)] $b \in H^{2}(\partial\Omega)$, $b \ge M_2 >0$ in $\partial\Omega$;
\item[(iii)] $f\in W^{1,p}\II \subset C( \overline \Omega)$ (with some  $p>\max(d,2)$), $f \ge 0$ and $f  +\Delta v  - M_1 v  \ge 0$ in $\Omega$.
\end{itemize}
\end{assumption}\vskip5pt

Under Assumption \ref{ass:cond-1}, we have the following results about the solution regularity and behaviours for the direct problem \eqref{eqn:pde}.
\begin{lemma}\label{lem:u-reg}
Let $q\in \mathcal{Q}$ and Assumption \ref{ass:cond-1} be valid. Then
the solution $u(t)$ to problem \eqref{eqn:pde} with potential $q$ satisfies the following properties:
\begin{itemize}
\item[(i)] $u(t) \in H^2 \II$ for all $t>0$, and there exists a constant $C$ independent of $q$ such that
$\max_{t\in[0,T]} \| u(t)  \|_{L^\infty\II} \le C$;
\item[(ii)] $\Dal u(t)\in H^2\II \cap H_0^1\II $, $\Delta u(t) \in C(\overline \Omega)$, and $\Dal u(x,t) \ge 0$, $u(x,t)\ge M_2$ for all $(x,t)\in\overline\Omega\times[0,T]$;
\item[(iii)] $ f(x)+ \Delta u(x,t) \ge q(x)M_2$ for all $t>0$ and $x\in \overline \Omega$.
\end{itemize}
\end{lemma}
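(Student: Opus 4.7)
The plan is to exploit the solution representation \eqref{eqn:sol-rep} by decomposing
\[
u(t) = F(t;q) w_0 + w_1, \qquad w_0 := v - D(q)b - A(q)^{-1} f, \quad w_1 := D(q)b + A(q)^{-1} f,
\]
so that all time dependence is concentrated in the first summand $F(t;q)w_0$. My first step will be to verify $w_0 \in H^2\II \cap H_0^1\II$: the compatibility condition $v=b$ on $\partial\Omega$ from Assumption \ref{ass:cond-1}(i), together with $D(q)b = b$ and $A(q)^{-1}f = 0$ on $\partial\Omega$, yields $w_0|_{\partial\Omega} = 0$, while the $H^2$-regularity follows from $v \in H^2\II$, the lift bound \eqref{eqn:Dqb2} applied to $b \in H^2(\partial\Omega) \hookrightarrow H^{3/2}(\partial\Omega)$, and \eqref{eqn:equiv-n} applied to $A(q)^{-1}f$ with $f \in L^2\II$. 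A direct calculation then yields $A(q)w_0 = -\Delta v + qv - f \in L^2\II$, which will be the central regularity input for everything downstream.

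For part (i), I would apply \eqref{eqn:equiv-n} to $F(t;q)w_0$, together with the functional-calculus identity $A(q)F(t;q)w_0 = F(t;q)A(q)w_0$ and the uniform $L^2$-bound on $F(t;q)$ from Lemma \ref{lem:op}(ii), to obtain $\|F(t;q)w_0\|_{H^2\II} \le C\|w_0\|_{H^2\II}$. Since $\|w_1\|_{H^2\II} \le C$ by \eqref{eqn:Dqb2} and \eqref{eqn:equiv-n}, this shows $\|u(t)\|_{H^2\II} \le C$ uniformly in $t \in [0,T]$, and the $L^\infty$-bound is then immediate from the embedding $H^2\II \hookrightarrow C(\overline\Omega)$, valid for $d \le 3$.

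For part (ii), differentiating in time gives $\Dal u = -A(q)F(t;q)w_0 = -F(t;q)A(q)w_0$, since $\Dal w_1 = 0$. Because $A(q)w_0 \in L^2\II$, Lemma \ref{lem:op}(i) applied to $A(q)F(t;q)\bigl(A(q)w_0\bigr)$ places $\Dal u(t) \in \text{Dom}(A(q)) = H^2\II \cap H_0^1\II$ for $t>0$. Rearranging the PDE gives $\Delta u = \Dal u + qu - f$, and each summand lies in $C(\overline\Omega)$ (using $q, f \in C(\overline\Omega)$ and $H^2\II \hookrightarrow C(\overline\Omega)$), so $\Delta u \in C(\overline\Omega)$. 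For the sign, I rewrite $\Dal u = F(t;q) h$ with $h := f + \Delta v - qv$; Assumption \ref{ass:cond-1}(iii) combined with $q \le M_1$ gives $h \ge f + \Delta v - M_1 v \ge 0$, and the maximum principle Lemma \ref{positive}, applied to the homogeneous problem solved by $F(t;q) h$ (zero source, zero boundary data, initial datum $h \ge 0$), delivers $\Dal u \ge 0$. To upgrade this to $u \ge M_2$, I apply the Riemann--Liouville fractional integral $I^\alpha$ and invoke $I^\alpha \Dal u = u(t) - u(0)$; since $I^\alpha$ has a positive kernel and therefore preserves non-negativity, $u(t) \ge v \ge M_2$ follows at once. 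Part (iii) is then immediate from the equation: $f + \Delta u = \Dal u + qu \ge 0 + q M_2 = q M_2$.

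The main obstacle I anticipate is the careful bookkeeping required to establish $w_0 \in H^2\II \cap H_0^1\II$ together with $A(q)w_0 \in L^2\II$, since everything downstream hinges on this identification; the compatibility $v = b$ on $\partial\Omega$ is used precisely to absorb the otherwise unmatched boundary traces of the Dirichlet lift. Once this is in place, the smoothing estimates of Lemma \ref{lem:op} and the maximum principle of Lemma \ref{positive} apply essentially verbatim, and the Riemann--Liouville step providing the uniform positive lower bound on $u$ is a standard consequence of Caputo non-negativity.
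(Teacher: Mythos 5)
Your proof is correct. Parts (ii) and (iii) coincide with the paper's argument: both identify $\Dal u(t)=F(t;q)(f+\Delta v-qv)$, use the smoothing estimate of Lemma \ref{lem:op} together with elliptic regularity to place it in $H^2\II\cap H_0^1\II$, apply the maximum principle (Lemma \ref{positive}) to the homogeneous problem with nonnegative initial datum $f+\Delta v-qv\ge f+\Delta v-M_1v\ge0$ to obtain the sign, and recover $u\ge M_2$ by fractional integration of $\Dal u\ge 0$. Where you genuinely diverge is the uniform $L^\infty$ bound in part (i). The paper introduces the auxiliary solution $\phi$ of the potential-free problem \eqref{PDE_q_0}, invokes maximal $L^p$ regularity and space--time interpolation to show $\phi\in C([0,T]\times\overline\Omega)$, and then dominates $u$ by $\phi$ via the comparison principle. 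You instead prove the stronger statement $\sup_{t\in[0,T]}\|u(t)\|_{H^2\II}\le C$ directly from the representation $u=F(t;q)w_0+w_1$: the compatibility $v=b$ on $\partial\Omega$ places $w_0=v-D(q)b-A(q)^{-1}f$ in $\mathrm{Dom}(A(q))$ with $A(q)w_0=-\Delta v+qv-f\in L^2\II$, so the commutation $A(q)F(t;q)w_0=F(t;q)A(q)w_0$, the $q$-uniform bound of Lemma \ref{lem:op}(ii) and the norm equivalence \eqref{eqn:equiv-n} yield a $t$- and $q$-uniform $H^2$ bound, and the embedding $H^2\II\hookrightarrow C(\overline\Omega)$ for $d\le3$ finishes. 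Your route is more self-contained (it avoids maximal regularity, the space--time interpolation, and the comparison principle altogether) and even delivers a uniform $H^2$ bound rather than just an $L^\infty$ one; its only extra burden is the careful verification that $w_0$ has zero trace, which you carry out explicitly.
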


\begin{proof}
By the smoothing property in Lemma \ref{lem:op}, we observe that
\begin{equation*}
A(q)[F(t;q)v   -F(t;q)D(q)b-F(t;q) A(q)^{-1}f] \in L^2\II.
\end{equation*}
Then the elliptic regularity (see \cite[Lemma 2.1]{LiSun:2017} and \cite[Theorems 3.3 and 3.4]{GruterWidman:1982})  implies that
$F(t;q)v   -F(t;q)D(q)b  -F(t;q) A(q)^{-1}f \in H^2\II $.
Besides, we observe that
$D(q)b$ and $A(q)^{-1}f$ belong to $H^2\II $ (see e.g. \cite[Proposition 2.12]{Acquistapace:1991}
and \cite[Theorem B.54]{ern-guermond}).
These together with \eqref{eqn:sol-rep}
imply that $u(t)\in H^2\II$. Finally, we define an auxiliary function $\phi(x,t)$ satisfying
\begin{equation}\label{PDE_q_0}
 \begin{cases}
  \begin{aligned}
     \Dal \phi(x,t)-\Delta \phi(x,t)  &=f(x), &&(x,t)\in \Omega\times(0,T],\\
    \phi(x,t)&=b(x),&&(x,t)\in\partial\Omega\times(0,T],\\
    \phi(x,0) &=v(x)  ,&&x\in\Omega.
  \end{aligned}
  \end{cases}
 \end{equation}
By  Assumption \ref{ass:cond-1} and the maximal $L^p$ regularity
(see e.g. \cite[Lemma 2.1]{LiSun:2017} for parabolic equation and \cite[Theorem 6.11]{Jin:2021book} for fractional evolution equations),
we know that $\phi \in W^{\alpha, q}(0,T;L^2\II)\cap L^q(0,T; H^2\II)$ for any $q\in[2,\infty)$.
 Then by means of the Sobolev embedding and the interpolation between $W^{\alpha, q}(0,T;L^2\II)$ and $L^q(0,T;H^2\II)$,  we have $\phi \in C([0,T]\times\overline\Omega)$.
 As a result, the comparison principle implies
 $ \| u   \|_{C([0,T]\times\overline\Omega)} \le  \| \phi  \|_{C([0,T]\times\overline\Omega)} \le C $,
 where the constant $C$ is  independent of potential $q$. Then we complete  the proof of (i).

Next, we let $w=\Dal u$, which is the solution to the following initial-boundary value problem
\begin{equation}\label{PDE_w_1}
 \begin{cases}
  \begin{aligned}
     \Dal w(x,t)-\Delta w(x,t) +q(x) w(x,t) &=0, &&(x,t)\in \Omega\times(0,T],\\
    w(x,t)&=0,&&(x,t)\in\partial\Omega\times(0,T],\\
    w(x,0)=f(x)+\Delta v (x) &- q(x)v (x),&&x\in\Omega.
  \end{aligned}
  \end{cases}
 \end{equation}
 Noting that $w(x,0)\in L^2\II$ by Assumption \ref{ass:cond-1},
 then we apply  Lemma \ref{lem:op} to arrive that
 $$ A(q)w(t) = A(q) F(t) [f +\Delta v  - q v  ]  \in L^2\II. $$
 Then the elliptic regularity
 implies $\Dal u(t) = w(t) \in H^2\II \cap H_0^1\II \subset C(\overline \Omega)$ for $t>0$.
 Recalling Assumption \ref{ass:cond-1} (i) and (iii), we have
 $f(x)+\Delta v (x)  - q(x) v (x) \ge 0$.
 This and Lemma \ref{positive} indicate the
 positivity of $\Dal u(x,t)$.
Meanwhile, the facts that $u(t), \Dal u(t), q, f \in C( \overline \Omega)$ lead to $\Delta  u(t) \in C(\overline \Omega)$.
 Besides, by means of the facts that $v(x),b(x)\ge M_2$ in Assumption \ref{ass:cond-1} and $\Dal u(x,t) \ge 0$, we derive
 $$ u(x,t) = u(x,0) +  \int_0^t \frac{(t-s)^{\alpha-1}}{\Gamma(\alpha)} \partial_s^\alpha u(x,s)\,\d s \ge u(x,0) \ge M_2$$
for all $(x,t)\in \bar \Omega \times [0,T]$.

Finally, by the positivity of $\Dal u(x,t)$ we conclude that
\begin{equation}\label{eqn:qinD}
 f(x)+ \Delta  u(x,t)=\Dal u(x,t)+q(x)u(x,t)\ge q(x)u(x,t) \ge q(x) M_2.
\end{equation}
This completes the proof of (ii) and (iii).
\end{proof}\vskip5pt

From now on, we use the notation $u(q)$ to denote the solution to \eqref{eqn:pde} with the potential $q$.
Let $q^\dag$ be the exact potential to be reconstructed.
Under Assumption \ref{ass:cond-1}, according to Lemma \ref{lem:u-reg},
the (exact) observation $g(x) = u(x,T; q^\dag)$ satisfies
\begin{equation}\label{ass:g}
 g \in C(\overline\Omega), ~~ \Delta g \in C(\overline\Omega),~~ f(x) + \Delta g(x) \ge 0, ~~\text{and}~~ g(x)\ge M_2 > 0 ~ \text{for all}~x\in \overline\Omega .
\end{equation}

To show the uniqueness of the potential, we define an operator
\begin{equation}\label{eqn:K}
 Kq(x)=\frac{f(x)-\Dal u(x,T;q)+\Delta g(x)}{g(x)} \quad \text{for}~~q \in \Q.
\end{equation}
Then under Assumption \ref{ass:cond-1}, Lemma \ref{lem:u-reg} implies that the exact potential $q^\dag$ satisfies
\begin{equation*}\label{eqn:DK}
q^\dag \in \DK=\Big\{\psi\in C(\overline \Omega):0\le\psi\le \frac{f(x)+ \Delta g(x)}{g(x)}\Big\}.
\end{equation*}

Next, we intend to show that the inverse potential problem is equivalent
to find a fixed point of the operator $K$ in the set $\mathcal{D}(K)$.
This is given by the following lemma.
\begin{lemma}\label{lem:equiv}
Let Assumption \ref{ass:cond-1}  be valid and the operator $K$ be defined by \eqref{eqn:K}.
Then we have the following equivalence.
\begin{itemize}
\item[(i)] If $q^\dag\in\mathcal{Q}$ satisfies $u(x,T;q^\dag)=g(x)$,
then $q^\dag$ is a fixed point of the operator $K$  in $\DK$.
\item[(ii)] If $q^\dag \in \DK$ is a fixed point of the operator $K$, then $q^\dag$ satisfies $u(x,T;q^\dag)=g(x)$.
\end{itemize}
\end{lemma}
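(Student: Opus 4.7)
The plan is to reformulate both implications in terms of the PDE \eqref{eqn:pde} evaluated at the terminal time $t=T$, so that the fixed point equation $Kq=q$ becomes an elliptic identity relating $u(T;q)$ to $g$. All required regularity (so that $\Dal u(T;q)$, $\Delta u(T;q)$ and $\Delta g$ are continuous on $\overline\Omega$ and the quotient in $K$ is well defined) is already supplied by Lemma \ref{lem:u-reg} and by \eqref{ass:g}.

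For part (i), I would start from the hypothesis $u(x,T;q^\dag)=g(x)$ and evaluate the equation \eqref{eqn:pde} at $t=T$ to obtain
\[
\Dal u(x,T;q^\dag) = f(x) + \Delta g(x) - q^\dag(x) g(x).
\]
Substituting this into the definition \eqref{eqn:K} of $K$ gives $Kq^\dag(x)=q^\dag(x)$, so $q^\dag$ is a fixed point. It remains to check $q^\dag\in\DK$. The bound $q^\dag\ge 0$ is part of $\mathcal Q$, and the upper bound follows by rearranging the identity above: since Lemma \ref{lem:u-reg}(ii) gives $\Dal u(x,T;q^\dag)\ge 0$ and $g(x)\ge M_2>0$, one concludes $q^\dag(x) \le (f(x)+\Delta g(x))/g(x)$ pointwise, which is exactly the defining inequality of $\DK$.

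For part (ii), the approach is to reverse the above manipulation and then use a maximum-principle argument. Assume $q^\dag\in\DK$ satisfies $Kq^\dag=q^\dag$; multiplying by $g$ gives
\[
q^\dag(x) g(x) = f(x) - \Dal u(x,T;q^\dag) + \Delta g(x).
\]
Using \eqref{eqn:pde} at $t=T$ to replace $\Dal u(x,T;q^\dag)$ by $f(x)+\Delta u(x,T;q^\dag)-q^\dag(x)u(x,T;q^\dag)$ and setting $w(x):=g(x)-u(x,T;q^\dag)$, the $f$ terms cancel and the remaining identity collapses to
\[
-\Delta w(x) + q^\dag(x) w(x) = 0 \quad\text{in } \Omega.
\]
Since both $g$ and $u(\cdot,T;q^\dag)$ equal $b$ on $\partial\Omega$ (for $g$ this is part of the data assumption built into \eqref{ass:g} via $g=u(T;q^\dag_{\mathrm{true}})$ with boundary value $b$), we have $w=0$ on $\partial\Omega$. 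As $q^\dag\ge 0$, the operator $-\Delta+q^\dag I$ is coercive with homogeneous Dirichlet data, so $w\equiv 0$, i.e.\ $u(x,T;q^\dag)=g(x)$.

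The only slightly delicate point is part (ii): one has to be careful that the identity $\Delta w = q^\dag w$ is legitimately an identity in, say, $C(\overline\Omega)$ (so that the elliptic maximum principle applies classically), and that the boundary match $g|_{\partial\Omega}=b$ is indeed available. Both follow from the continuity statements of Lemma \ref{lem:u-reg} and \eqref{ass:g}, so no further work is needed beyond carefully quoting them.
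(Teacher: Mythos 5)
Your proof is correct and follows essentially the same route as the paper: evaluate the PDE at $t=T$ for part (i) and use positivity of $\Dal u(T;q^\dag)$ and $g\ge M_2$ to place $q^\dag$ in $\DK$, then for part (ii) reduce the fixed-point identity to the homogeneous elliptic problem $-\Delta w+q^\dag w=0$ with $w=0$ on $\partial\Omega$ and conclude $w\equiv0$. The only cosmetic difference is that the paper invokes the elliptic comparison principle for the last step where you invoke coercivity; both are valid since $q^\dag\ge0$.
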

\begin{proof}
It is obvious that $u(x,T;q^\dag)=g(x)$ implies that $q^\dag$ is the fixed point of $K$.
Then the relation \eqref{ass:g}  and the fact that $\partial_t^\alpha u(x,t;q^\dag) \ge 0$
(by Lemma \ref{lem:u-reg}) yield that $q^\dag \in \DK$.

Therefore, it suffices to show the reversed conclusion.
We assume that $q^\dag \in \DK$ is one fixed point of operator $K$, then we have
\begin{equation*}
 f(x)-\Dal u(x,T;q^\dag)=q^\dag(x)g(x)-\Delta g(x)=-\Delta u(x,T;  q^\dag )+q^\dag(x)u(x,T; q^\dag ).
\end{equation*}
Letting $w(x)=u(x,T;q^\dag)-g(x)$, we observe that $w$ satisfies the elliptic system
 \begin{equation*}
 \begin{cases}
  \begin{aligned}
    -\Delta w(x)+q^\dag(x)w(x)&=0, &&x\in \Omega,\\
    w(x)&=0,&&x\in\partial\Omega.
  \end{aligned}
  \end{cases}
 \end{equation*}
Then the comparison principle of elliptic equation implies $w=0$.
Hence $u(x,T;q^\dag)=g(x)$, which implies that $q^\dag$ generates the terminal measurement $g(x)$.
\end{proof}

Due to the equivalence given by Lemma \ref{lem:equiv} and the fact that $q^\dag \in \DK$,
we aim to verify that the operator
$K$ admits a unique fixed point in $\DK$.
To this end, we intend to show that $K$ generates a decreasing sequence in $\DK$
from an \textsl{\textsl{a priori}} chosen starting value.
Then the uniqueness of the fixed point follows immediately.

\begin{lemma}[Monotonicity]\label{monotone}
Let Assumption \ref{ass:cond-1} be valid. Then
$K$ is a monotone operator, i.e., $Kq_1\le Kq_2$ for any $q_1,q_2\in  \DK $ with $q_1\le q_2$.
\end{lemma}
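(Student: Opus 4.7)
The plan is to exploit the positivity identity $g > 0$ to reduce the claimed inequality on $K$ to a comparison inequality for the time derivative $\partial_t^\alpha u(\cdot,T;q)$, then derive that comparison by setting up a subsidiary subdiffusion problem for the difference and invoking the maximum principle (Lemma \ref{positive}).

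Concretely, since $g(x) \ge M_2 > 0$ by \eqref{ass:g}, the inequality $Kq_1 \le Kq_2$ is equivalent to the pointwise inequality
\begin{equation*}
\partial_t^\alpha u(x,T;q_2) \le \partial_t^\alpha u(x,T;q_1), \qquad x \in \overline\Omega.
\end{equation*}
For $i = 1,2$ set $w_i := \partial_t^\alpha u(\cdot;q_i)$. From the argument in the proof of Lemma \ref{lem:u-reg}(ii), $w_i$ solves the linear subdiffusion problem \eqref{PDE_w_1} with potential $q_i$, homogeneous Dirichlet boundary condition, and initial datum $f + \Delta v - q_i v$.

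I would then form the difference $z := w_1 - w_2$. Subtracting the two equations for $w_1, w_2$ and grouping the potential terms as $q_1 w_1 - q_2 w_2 = q_1 z - (q_2 - q_1) w_2$, one sees that $z$ satisfies
\begin{equation*}
\begin{cases}
\partial_t^\alpha z - \Delta z + q_1 z = (q_2 - q_1)\, w_2 & \text{in } \Omega\times(0,T],\\
z = 0 & \text{on } \partial\Omega\times(0,T],\\
z(\cdot,0) = (q_2 - q_1)\, v & \text{in } \Omega.
\end{cases}
\end{equation*}
By hypothesis $q_2 - q_1 \ge 0$; by Assumption \ref{ass:cond-1}(i) $v \ge M_2 > 0$; and by Lemma \ref{lem:u-reg}(ii) $w_2 \ge 0$. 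Hence both the initial datum and the source term are nonnegative on $\overline\Omega$, while the boundary datum vanishes.

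The conclusion $z \ge 0$ then follows from the positivity principle. The mild obstacle is that Lemma \ref{positive} as stated takes a time-independent source, whereas here $(q_2 - q_1) w_2(\cdot,s)$ depends on $s$. I would handle this either by appealing to the standard extension of the maximum principle to nonnegative time-dependent sources (available both in the parabolic case and in the subdiffusion setting, cf.\ \cite{LuchkoYamamoto:2017}), or equivalently by writing the Duhamel representation
\begin{equation*}
z(t) = F(t;q_1)\bigl[(q_2-q_1)v\bigr] + \int_0^t E(t-s;q_1)\bigl[(q_2 - q_1)\,w_2(s)\bigr]\,\mathrm{d}s
\end{equation*}
and invoking positivity of the solution operators $F(\cdot;q_1)$ and $E(\cdot;q_1)$ acting on nonnegative data, which is exactly what Lemma \ref{positive} asserts in the homogeneous case. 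Evaluating at $t = T$ gives $w_1(T) \ge w_2(T)$, and dividing by $g > 0$ yields $Kq_1 \le Kq_2$.
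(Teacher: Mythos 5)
Your proof is correct and follows essentially the same route as the paper: both reduce the claim to the sign of $w=\Dal(u(q_1)-u(q_2))$, derive the same auxiliary problem with source $(q_2-q_1)\Dal u(q_2)\ge 0$ and initial datum $(q_2-q_1)v\ge 0$, and conclude by the positivity principle before dividing by $g\ge M_2>0$. In fact you are slightly more careful than the paper, which applies Lemma \ref{positive} directly despite its statement assuming a time-independent source; your Duhamel-representation remedy is a valid way to close that small gap.
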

\begin{proof}
First of all, we recall Lemma \ref{lem:u-reg} which implies that $\Dal u(x,t;q_2)\ge 0$ in $[0,T]\times\Omega$.
 Then we define $w(x,t)=\Dal (u(x,t;q_1) - u(x,t;q_2))$, and note that $w$ satisfies
\begin{equation*}
 \begin{cases}
  \begin{aligned}
    (\Dal -\Delta + q_1(x)) w(x,t)&=(q_2-q_1)\Dal u(x,t;q_2),&&(x,t)\in \Omega\times(0,T],\\
    w(x,t)&=0,&&(x,t)\in\partial\Omega\times(0,T],\\
    w(x,0)&= (q_2 - q_1) v(x) ,&&x\in\Omega.
  \end{aligned}
  \end{cases}
 \end{equation*}
 Since $ (q_2 - q_1) v(x) , \, (q_2-q_1)\Dal u(x,t;q_2)\ge 0$, using Lemma \ref{positive} again yields that
 \begin{equation*}
  w(x,t)=\Dal u(x,t;q_1)-\Dal u(x,t;q_2)\ge 0.
 \end{equation*}
From the definition of $K$ in \eqref{eqn:K} and the fact that $g(x)\ge M_2 >0 $ in $\Omega$ by \eqref{ass:g}, we have
 \begin{equation*}
  Kq_1-Kq_2=\frac{\Dal u(x,T;q_2)-\Dal u(x,T;q_1)}{g(x)}\le 0.
 \end{equation*}
This completes the proof of the lemma.
\end{proof}

Then the monotonicity of $K$ immediately implies the following lemma.

\begin{lemma}\label{uniqueness1}
Suppose that $v , f, b$ satisfy Assumption \ref{ass:cond-1}.
If $q_1, q_2\in \mathcal{D}(K)$ are both fixed points of $K$ and  $q_1 \le q_2$, then $q_1=q_2$.
\end{lemma}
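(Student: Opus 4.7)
The plan is to reduce the fixed-point hypothesis to a PDE uniqueness question with a common terminal trace, and then extract equality via a maximum principle together with a Caputo extremum principle applied at $t=T$.

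First, I would apply Lemma \ref{lem:equiv}(ii) to convert the fixed-point hypothesis into the equality of terminal traces $u(\cdot,T;q_1) = g = u(\cdot,T;q_2)$. Setting $W := u(\cdot;q_1) - u(\cdot;q_2)$ and subtracting the two governing PDEs, $W$ satisfies
\begin{equation*}
\Dal W - \Delta W + q_1 W = (q_2 - q_1)\, u(\cdot;q_2) \quad \text{in } \Omega\times(0,T],
\end{equation*}
with homogeneous initial and boundary conditions, and with the additional relation $W(\cdot,T) \equiv 0$. Since $q_2 \ge q_1$ and $u(\cdot;q_2) \ge M_2 > 0$ by Lemma \ref{lem:u-reg}(ii), the source is nonnegative, so Lemma \ref{positive} (applied with a time-dependent nonnegative source, a routine extension) yields $W \ge 0$ on $\overline\Omega \times [0,T]$.

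Next, I would evaluate the PDE for $W$ pointwise at $t=T$: the condition $W(\cdot,T) = 0$ forces $\Delta W(\cdot,T) = 0$, so the equation collapses to
\begin{equation*}
\Dal W(x,T) = (q_2(x) - q_1(x))\, g(x) \ge 0.
\end{equation*}
On the other hand, $W(x,\cdot)$ is nonnegative on $[0,T]$ and vanishes at both endpoints, so $t=T$ is a minimum in time. Integrating by parts in the Caputo integral, using $W(x,0)=0$ and the local behaviour $W(x,s) = O(T-s)$ near $s=T$,
\begin{equation*}
\Dal W(x,T) = \frac{1}{\Gamma(1-\alpha)} \int_0^T (T-s)^{-\alpha} \partial_s W(x,s)\,\d s = -\frac{\alpha}{\Gamma(1-\alpha)} \int_0^T (T-s)^{-\alpha-1} W(x,s)\,\d s \le 0,
\end{equation*}
while the case $\alpha = 1$ follows from the one-sided limit $\partial_t W(x,T) = -\lim_{h\to 0^+} W(x,T-h)/h \le 0$. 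Combining the two bounds gives $(q_2 - q_1)\,g \equiv 0$ in $\Omega$, and $g \ge M_2 > 0$ then forces $q_1 \equiv q_2$.

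The main technical obstacle is the rigorous justification of the integration by parts for $\alpha\in(0,1)$: I would need to verify that $W(x,\cdot)$ is absolutely continuous on $(0,T]$, that the boundary contribution $(T-s)^{-\alpha} W(x,s)$ vanishes as $s\to T^-$, and that the kernel $(T-s)^{-\alpha-1} W(x,s)$ is integrable on $(0,T)$. All three follow from the temporal smoothing estimates of Lemma \ref{lem:op} applied to the Duhamel representation of $W$, combined with the specific fact that $W(x,T)=0$. A cleaner alternative would be to directly invoke a Caputo extremum principle (in the Luchko--Yamamoto spirit) at the interior minimum $t=T$ to obtain $\Dal W(x,T) \le 0$ without any integration by parts.
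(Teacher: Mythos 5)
Your proof is correct, and it reaches the conclusion by a genuinely different final step than the paper. Both arguments begin identically: Lemma \ref{lem:equiv} gives $u(T;q_1)=u(T;q_2)=g$, and the maximum principle applied to $W=u(q_1)-u(q_2)$ (with nonnegative source $(q_2-q_1)u(q_2)$ and zero initial/boundary data) gives $W\ge 0$. The paper then invokes the computation from the proof of Lemma \ref{monotone} to obtain the additional information $\Dal W\ge 0$ on all of $(0,T)$, and feeds this into the Riemann--Liouville identity $0=W(x,T)=W(x,0)+\frac{1}{\Gamma(\alpha)}\int_0^T(T-t)^{\alpha-1}\Dal W(x,t)\,\d t$ to force $\Dal W\equiv 0$, hence $W\equiv 0$, hence $(q_2-q_1)u(q_2)\equiv 0$ on the whole cylinder. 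You instead work only on the terminal slice: evaluating the equation at $t=T$, where $W(\cdot,T)\equiv 0$ annihilates the elliptic terms, gives $\Dal W(x,T)=(q_2-q_1)g\ge 0$, while the Caputo extremum principle at the minimum $t=T$ gives $\Dal W(x,T)\le 0$. Your route avoids the second application of the comparison principle (to $\Dal W$) that the paper borrows from Lemma \ref{monotone}, at the price of justifying the extremum principle / integration by parts at $s=T$ --- which, as you correctly flag, needs $W(x,s)=O(T-s)$ near $s=T$ and integrability of $\partial_sW$ near $s=0$ (where it may behave like $s^{\alpha-1}$); both follow from the smoothing estimates of Lemma \ref{lem:op} applied to the Duhamel representation of $W$, or one can cite the extremum principle of \cite{LuchkoYamamoto:2017} directly, exactly as you suggest. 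The paper's route is slightly more self-contained in that it reuses machinery already established two lemmas earlier; yours is more local and makes transparent that only the behaviour at the terminal time is actually being exploited.
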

\begin{proof}
 From Lemma \ref{lem:equiv}, we have $u(x,T;q_1)=u(x,T;q_2)=g(x)$.
 Define $w(x,t)=u(x,t;q_1)-u(x,t;q_2)$, then the PDE model for $w$ is given as
 \begin{equation}\label{PDE-w}
 \begin{cases}
  \begin{aligned}
    (\Dal-\Delta+q_1(x)) w(x,t)&=(q_2-q_1)u(x,t;q_2), &&(x,t)\in \Omega\times(0,T],\\
    w(x,t)&=0,&&(x,t)\in\partial\Omega\times(0,T],\\
    w(x,0)&=0,&&x\in\Omega.
  \end{aligned}
  \end{cases}
 \end{equation}
 From Lemma \ref{positive}, we have $u(x,t;q_2)> 0$ in $\Omega\times [0,T]$,
 which leads to the non-negativity of the source $(q_2-q_1)u(x,t;q_2)$.
 This yields that
 $w(x,t)\ge 0$ in $\overline \Omega\times[0,T]$. From the proof of Lemma \ref{monotone}, we have
 $\Dal (u(x,t;q_1)- u(x,t;q_2))=\Dal w(x,t)\ge 0$.  The relation
 \begin{equation*}
  w(x,T)= w(x,0)+ \int_0^T \frac{(T-t)^{\alpha-1}}{\Gamma(\alpha)} \partial_t^\alpha w(x,t)\ dt
 \end{equation*}
 together with the results
 $$w(x,T)=u(x,T;q_1)-u(x,T;q_2)=0,~~ w(x,0)=0 ~~ \text{and}~~ \Dal w(x,t)\ge 0$$
immediately yields that $\Dal w(x,t)=0$ for $t\in (0,T)$ almost everywhere, and hence $w(x,t)\equiv0$.
This and the equation \eqref{PDE-w} imply  that $(q_2-q_1)u(x,t;q_2)=0$ on
$\Omega\times [0,T]$. This together with the strict positivity of $u(x,t;q_2)$ in $\Omega\times [0,T]$ leads to $q_1=q_2$.
\end{proof}

The above results motivate us to define the  iteration:
\begin{equation}\label{iteration}
 q_0(x) =\frac{f(x)+\Delta g(x)}{g(x)}\in\DK \quad \text{and}\quad q_n=K  q_{n-1}
 ~~ \text{for}~~n\in\mathbb N^+.
\end{equation}
Note that the initial guess $q_0$ is set to be the upper bound of the set $\mathcal{D}(K)$.
%
%
%
%
%
Next, we shall state the main theorem in this section which shows that the
fixed point of $K$ must be the limit of the sequence $\{q_n\}_{n=0}^\infty$ generated by \eqref{iteration} and hence it is unique.
\begin{theorem}\label{thm:uniqueness2}
If there exists one fixed point $q^\dag\in\mathcal{D}(K)$ of $K$, then the sequence
$\{q_n\}_{n=0}^\infty$ generated by \eqref{iteration} is included in $\mathcal{D}(K)$ and converges decreasingly to $q^\dag$.
Therefore, the operator $K$ admits at most one fixed point in $\mathcal{D}(K)$.
\end{theorem}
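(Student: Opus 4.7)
The plan is to show that the iterates $\{q_n\}$ form a monotone decreasing sequence in $\DK$ sandwiched between $q^\dag$ and $q_0$, to pass to the pointwise limit $q^{*}=\lim q_n$, to verify that $q^{*}$ is again a fixed point of $K$ in $\DK$, and then to invoke Lemma \ref{uniqueness1} to conclude $q^{*}=q^\dag$. Uniqueness of the fixed point then follows because the sequence $\{q_n\}$ depends only on the data and must converge to any given fixed point.

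First I would verify by induction on $n$ that $q^\dag\le q_n\le q_{n-1}$ and $q_n\in\DK$ for all $n\ge 1$. The key observation is that for every $\psi\in\DK\subset\Q$ we have $\Dal u(T;\psi)\ge 0$ by Lemma \ref{lem:u-reg}(ii), hence $K\psi\le (f+\Delta g)/g=q_0$. In particular $q_1=Kq_0\le q_0$, and since $q^\dag=Kq^\dag$, the monotonicity of $K$ (Lemma \ref{monotone}) combined with $q^\dag\le q_0$ gives $q^\dag=Kq^\dag\le Kq_0=q_1$; both $q^\dag\ge 0$ and $q_1\le q_0$ ensure $q_1\in\DK$. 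The inductive step is identical: assuming $q^\dag\le q_n\le q_{n-1}$ with both in $\DK$, Lemma \ref{monotone} yields $q^\dag=Kq^\dag\le Kq_n=q_{n+1}\le Kq_{n-1}=q_n$, and $0\le q^\dag\le q_{n+1}\le q_0$ places $q_{n+1}$ in $\DK$.

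Second, since $\{q_n(x)\}$ is decreasing and bounded below by $q^\dag(x)$ at every $x$, there exists a pointwise limit $q^{*}(x)$ with $q^\dag\le q^{*}\le q_0$ and $q^{*}\in L^\infty(\Omega)$. To pass to the limit in $q_{n+1}=Kq_n$, I would establish continuity of $\psi\mapsto \Dal u(T;\psi)$ along the uniformly bounded family $\{q_n\}\cup\{q^{*}\}\subset\DK$. The difference $w_n=u(\cdot;q_n)-u(\cdot;q^{*})$ solves a linear (sub)diffusion equation with zero initial and boundary data and source $(q^{*}-q_n)u(\cdot;q_n)$; combining the solution representation \eqref{eqn:sol-rep} with the smoothing estimates of Lemma \ref{lem:op} and dominated convergence (using $|q_n-q^{*}|\le q_0\in L^\infty$) gives $\|u(T;q_n)-u(T;q^{*})\|_{L^2\II}\to 0$, whence $\|\Dal u(T;q_n)-\Dal u(T;q^{*})\|_{L^2\II}\to 0$ through the PDE itself. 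Dividing by $g\ge M_2>0$ shows $Kq_n\to Kq^{*}$, so $q^{*}=Kq^{*}$.

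Finally, since $q^{*}\in\DK$ is a fixed point with $q^{*}\ge q^\dag$, Lemma \ref{uniqueness1} forces $q^{*}=q^\dag$, establishing the decreasing convergence $q_n\downarrow q^\dag$. For the uniqueness clause, the same argument with any other fixed point $\tilde q\in\DK$ in place of $q^\dag$ shows $q_n\downarrow \tilde q$; since the sequence is intrinsic, $\tilde q=q^\dag$. The main obstacle I anticipate is the continuity step in Paragraph 3: controlling $\Dal u(T;q_n)-\Dal u(T;q^{*})$ in a norm strong enough to divide pointwise by $g$. This is essentially continuous dependence of the direct problem on an $L^\infty$ potential, and the resolvent estimate \eqref{eqn:resol} together with Lemma \ref{lem:op} should supply the necessary uniform bounds over $\DK$, but the argument must be arranged carefully so that all constants are independent of $n$.
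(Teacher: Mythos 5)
Your proposal is correct and follows essentially the same route as the paper: establish $0\le q^\dag\le q_{n+1}\le q_n\le q_0$ by induction using Lemma \ref{monotone}, pass to the monotone pointwise limit $q^{*}$, and invoke Lemma \ref{uniqueness1} to conclude $q^{*}=q^\dag$. The only difference is that you explicitly supply the continuity argument needed to show the limit $q^{*}$ is itself a fixed point (a step the paper asserts without proof), which is a worthwhile addition rather than a deviation.
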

\begin{proof}
 From the proof of Lemma \ref{monotone}, we conclude that $\Dal u(x,T;q_0)\ge 0$.
 This gives that
 \begin{equation*}
  q_1=K q_0=\frac{f(x)-\Dal u(x,T;q_0)+\Delta g(x)}{g(x)}
  \le \frac{f(x) + \Delta g(x)}{g(x)}=q_0(x).
 \end{equation*}
 Meanwhile, we know that $q^\dag\in \mathcal{D}(K)$ and so $q^\dag\le q_0$. This and Lemma \ref{monotone} result in
  $$0\le q^\dag=K q^\dag\le K q_0=q_1.$$
   As a result, we obtain $0\le q^\dag\le q_1\le q_0$. Using Lemma \ref{monotone} again, we have $K q^\dag\le K q_1\le Kq_0$, namely $q^\dag\le q_2\le q_1$.
Continuing this argument, we can conclude that
\begin{equation*}
 0\le q^\dag\le \cdots\le q_{n+1}\le q_n\le\cdots\le q_0.
\end{equation*}
Now we have proved $\{q_n\}_{n=0}^\infty$ is decreasing and bounded by $q^\dag$ from below and $q_0$ from above.
Therefore, this sequence is included in $\DK$.

Next, we intend to show that the sequence $\{q_n\}_{n=0}^\infty$ converges to $q^\dag$. Note that the sequence $\{q_n\}_{n=0}^\infty$ is decreasing and
it has a lower bound, therefore this sequence converges and we denote the limit by $q^*$, i.e. $q^*=\lim_{n\to \infty}K^n q_0$.
Then $q^*$ is one  fixed point of the operator $K$.
Moreover,
we have $q^\dag\le q^*$ since $q^\dag$ is the lower bound of $\{q_n\}_{n=0}^\infty$,
and  $q^\dag\le q^*\le q_0$ indicates that $q^*\in \DK$.
Finally, we apply  Lemma \ref{uniqueness1} to conclude that $q^\dag=q^*$, and hence complete the proof.
\end{proof}

%
%
%
%

\section{Conditional stability}\label{sec:stability}
The aim of this section is to establish a stability of the inverse potential problem.
Note that \cite{ZhangZhou:2017} provides a conditional stability in a Hilbert space setting for one dimensional diffusion problem
by applying a spectrum perturbation argument
(cf. \cite[Lemma 2.2]{ZhangZhou:2017} and \cite{Trubowitz:1987}), which is not applicable in high dimensional cases.
We refer interested readers to \cite{ChoulliYamamoto:1996,ChoulliYamamoto:1997,JinZhou:IP2021-a} for some conditional stability results
for sufficiently small $T$.

Let us begin with the following \textsl{a priori} estimate for $ \partial_t^\alpha u(t;q)$.

\begin{lemma}\label{lem:Dalu}
Let $q\in \Q$ and $u(q)$ be the solution to problem \eqref{eqn:pde}. Then we have the estimate 
$$  \|  \partial_t^\alpha u(t;q) \|_{H^{s}\II} 
\le c \min(t^{-s\alpha/2}, t^{-\alpha}) \quad \text{for all}~~s\in[0,2],$$
where $c$ 
is independent of $q$ and $t$.
\end{lemma}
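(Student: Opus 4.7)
The plan is to identify $w(t):=\partial_t^\alpha u(t;q)$ as the solution of a homogeneous (fractional) diffusion equation with nontrivial initial data, and then use the smoothing/asymptotic properties collected in Lemma \ref{lem:op} together with interpolation.

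First, I would recall from the proof of Lemma \ref{lem:u-reg} that $w$ satisfies the initial-boundary value problem \eqref{PDE_w_1}, namely
\[
(\partial_t^\alpha - \Delta + q I) w = 0 \text{ in } \Omega\times(0,T],\qquad w|_{\partial\Omega}=0,\qquad w(0) = w_0 := f + \Delta v - q v.
\]
Under Assumption \ref{ass:cond-1}, the data $v\in H^2(\Omega)$, $f\in W^{1,p}(\Omega)\subset L^2(\Omega)$ and $q\in\mathcal Q$ yield $w_0 \in L^2(\Omega)$ with a norm uniformly bounded in $q\in\mathcal Q$. Using the solution representation developed in \eqref{eqn:sol-rep} applied to the homogeneous problem with initial data $w_0$, we obtain
\[
w(t) = F(t;q) w_0.
\]

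Next, I would read off the two endpoint estimates from Lemma \ref{lem:op}. The bound (ii) gives $\|w(t)\|_{L^2(\Omega)} = \|F(t;q)w_0\|_{L^2(\Omega)} \le c\min(1,\lambda^{-1}t^{-\alpha})\|w_0\|_{L^2(\Omega)}$, which handles $s=0$. For $s=2$, bound (i) gives $\|A(q) F(t;q) w_0\|_{L^2(\Omega)} \le c\, t^{-\alpha}\|w_0\|_{L^2(\Omega)}$; combining with the $q$-uniform norm equivalence \eqref{eqn:equiv-n} and Lemma \ref{lem:u-reg}(ii) (which places $w(t)\in H^2(\Omega)\cap H_0^1(\Omega) = \mathrm{Dom}(A(q))$), this upgrades to
\[
\|w(t)\|_{H^2(\Omega)} \le c\, t^{-\alpha} \|w_0\|_{L^2(\Omega)}\le c\, t^{-\alpha}.
\]

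For intermediate $s\in(0,2)$, I would apply interpolation between $L^2(\Omega)$ and $H^2(\Omega)$:
\[
\|w(t)\|_{H^s(\Omega)} \le c\, \|w(t)\|_{L^2(\Omega)}^{1-s/2} \|w(t)\|_{H^2(\Omega)}^{s/2}
\le c\, \min(1, t^{-\alpha})^{1-s/2} \, t^{-s\alpha/2}.
\]
Splitting into the regimes $t\le 1$ and $t\ge 1$, one checks that the right-hand side is bounded by a constant times $t^{-s\alpha/2}$ when $t$ is small and by a constant times $t^{-\alpha(1-s/2)-s\alpha/2}=t^{-\alpha}$ when $t$ is large, which together give $c\min(t^{-s\alpha/2},t^{-\alpha})$, as desired. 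Since $\lambda$ and the constants in Lemma \ref{lem:op} and in \eqref{eqn:equiv-n} are independent of $q\in\mathcal Q$, so is the final constant.

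There is no substantive obstacle here; the only minor point to verify carefully is that the $H^2$ estimate uses a $q$-independent equivalence of norms, which is exactly the content of \eqref{eqn:equiv-n}, and that $w_0$ is bounded in $L^2(\Omega)$ uniformly over $q\in\mathcal Q$, which follows from Assumption \ref{ass:cond-1} together with the uniform bound $0\le q\le M_1$.
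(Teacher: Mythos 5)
Your proposal is correct and follows essentially the same route as the paper: represent $\partial_t^\alpha u(t;q)=F(t;q)(f+\Delta v-qv)$ via \eqref{PDE_w_1}, obtain the $s=0$ and $s=2$ endpoint bounds from Lemma \ref{lem:op} combined with the $q$-uniform norm equivalence \eqref{eqn:equiv-n}, and interpolate between $L^2\II$ and $H^2\II\cap H_0^1\II$. Your explicit verification that the interpolated bound reduces to $c\min(t^{-s\alpha/2},t^{-\alpha})$ in the two regimes $t\le 1$ and $t\ge 1$ is a detail the paper leaves implicit, but the argument is the same.
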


\begin{proof}
According to \eqref{PDE_w_1}, we have the representation
\begin{equation}\label{eqn:Dalu-4}
\partial_t^\al u(t;q) =   F(t;q)(\Delta v - q v +  f ) \in H^2\II \cap H_0^1\II\quad\text{for all}~~ t>0.
\end{equation}
Then applying Lemma \ref{lem:op}, we obtain
\begin{equation*}
\begin{aligned}
\| \Dal u(t;q) \|_{L^2\II} &\le \|F(t;q)(\Delta v - q v +  f )\|_{L^2\II}
 \le c \min(1,t^{-\alpha}) \big(\|v\|_{H^2\II} + \|f\|_{L^2\II} \Big).
\end{aligned}
\end{equation*}
Next, by applying the norm equivalence in \eqref{eqn:equiv-n} and the estimate in Lemma \ref{lem:op}, we derive
\begin{equation*}
\begin{aligned}
\| \Dal u(t;q) \|_{H^2\II} &\le c\Big(\| F(t;q) (\Delta v - q v +  f ) \|_{L^2\II} + \|A(q) F(t;q)(\Delta v - q v +  f )\|_{L^2\II} \big) \\
&\le c  \big( \min(1,t^{-\alpha}) \|\Delta v - q v +  f  \|_{L^2\II} +   c t^{-\alpha} \|\Delta v - q v +  f  \|_{L^2\II} \\
&\le  ct^{-\alpha}\big(\|v\|_{H^2\II} + \|f\|_{L^2\II} \Big).
\end{aligned}
\end{equation*}
These together with interpolation between $L^2\II$ and $H^2\II \cap H_0^1\II$ immediately lead to the desired result.
\end{proof}

For different potentials $q_1,q_2\in \mathcal{Q}$, we denote the solution to
\eqref{eqn:pde} with potential $q_i$ by $u(q_i)$.
Then the following lemma provides  an important \textsl{a priori} estimate
which (and whose discrete analogue) plays a crucial role in our error analysis.

\begin{lemma}\label{lem:stab-0}
Let Assumption \ref{ass:cond-1} be valid and $q_1, q_2 \in \mathcal{Q}$.
Then for any $t>t_0$ and any positive parameter $\epsilon < \min(1,2-\frac{d}{2})$ there holds
\begin{equation*}
\|\Dal(u(q_1)-u(q_2))(t)\|_{H^2\II} \le c \max(t^{-\alpha}, t^{-(1-\epsilon)\alpha}) \|q_1-q_2\|_{L^2\II},
\end{equation*}
where the constant $c$ 
is independent of $q_1$, $q_2$ and $t$.
\end{lemma}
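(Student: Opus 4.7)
The plan is to set $w := u(q_1)-u(q_2)$ and study $\tilde w := \Dal w$ directly. Subtracting the two copies of \eqref{eqn:pde} and applying $\Dal$ (just as in the proof of Lemma \ref{monotone}), $\tilde w$ solves
\[
\Dal \tilde w + A(q_1)\tilde w = (q_2-q_1)\Dal u(\cdot;q_2) \quad \text{in } \Omega\times(0,T],
\]
with $\tilde w(0)=(q_2-q_1)v$ and $\tilde w|_{\partial\Omega}=0$. Duhamel's formula then gives the representation
\[
\tilde w(t) = F(t;q_1)[(q_2-q_1)v] + \int_0^t E(t-s;q_1)[(q_2-q_1)\Dal u(s;q_2)]\,\d s,
\]
and the task reduces to $H^2\II$-estimating each piece. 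Throughout I will use the equivalence \eqref{eqn:equiv-n}, which identifies $\|\cdot\|_{H^2\II}$ with $\|A(q_1)\cdot\|_{L^2\II}+\|\cdot\|_{L^2\II}$ up to constants uniform in $q_1 \in \mathcal{Q}$.

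For the initial-datum piece, the smoothing estimates in Lemma \ref{lem:op}(i)--(ii), together with $v \in H^2\II \hookrightarrow L^\infty\II$ for $d\le 3$, yield $\|F(t;q_1)[(q_2-q_1)v]\|_{H^2\II} \le ct^{-\alpha}\|q_1-q_2\|_{L^2\II}$ without difficulty. For the convolution piece, I introduce $\phi(s):=(q_2-q_1)\Dal u(s;q_2)$ and split $\phi(s) = [\phi(s)-\phi(t)]+\phi(t)$. The constant-in-$s$ summand is handled by the operator identity $\int_0^t A(q_1)E(t-s;q_1)\,\d s = I-F(t;q_1)$ (a consequence of $F'(\cdot;q_1)=-A(q_1)E(\cdot;q_1)$), which reduces its $H^2$-norm to $c\|\phi(t)\|_{L^2\II} \le c\|q_1-q_2\|_{L^2\II}\|\Dal u(t;q_2)\|_{L^\infty\II}$. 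Using the Sobolev embedding $H^{2-2\epsilon}\II \hookrightarrow L^\infty\II$ (valid when $2-2\epsilon>d/2$, implied by $\epsilon<\min(1,2-d/2)$ after an inconsequential relabeling) together with Lemma \ref{lem:Dalu} gives $\|\Dal u(t;q_2)\|_{L^\infty\II} \le c\min(t^{-(1-\epsilon)\alpha},t^{-\alpha})$, precisely reproducing the envelope $\max(t^{-\alpha},t^{-(1-\epsilon)\alpha})$.

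The main obstacle is the remaining difference summand $\int_0^t A(q_1)E(t-s;q_1)[\phi(s)-\phi(t)]\,\d s$, because the bare bound $\|A(q_1)E(t-s;q_1)\|_{L^2\to L^2}\le c(t-s)^{-1}$ sits on the borderline of integrability and cannot be used directly. To overcome it, my plan is to exploit the time regularity of $\Dal u(\cdot;q_2) = F(\cdot;q_2)\psi_0$, with $\psi_0:=\Delta v - q_2 v + f$, which gives
\[
\phi(s)-\phi(t) = -(q_2-q_1)\int_s^t A(q_2)E(r;q_2)\psi_0\,\d r,
\]
and to pair it with the sharper interpolated estimate $\|A(q_1)^{1-\gamma}E(t-s;q_1)\|_{L^2\to L^2}\le c(t-s)^{-1+\gamma\alpha}$ for a small $\gamma>0$, derivable from the contour formula \eqref{eqn:sol-op-1} together with the uniform resolvent bound \eqref{eqn:resol}. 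Using the commutativity of $A(q_1)^\gamma$ with $E(\cdot;q_1)$ to transfer $A(q_1)^\gamma$ onto the bracket restores an integrable kernel $(t-s)^{-1+\gamma\alpha}$, and the extra smoothing packaged in the time-integral representation of $\phi(s)-\phi(t)$ supplies the $A^\gamma$-regularity needed to absorb this transfer. A careful tracking of the resulting $t$-powers across the small- and large-$t$ regimes should then deliver the asserted bound $c\max(t^{-\alpha},t^{-(1-\epsilon)\alpha})\|q_1-q_2\|_{L^2\II}$.
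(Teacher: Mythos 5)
Your route diverges from the paper's in a substantive way, because the paper's own proof of this lemma never estimates the $H^2\II$ norm at all: it takes the $L^2\II$ norm of the Duhamel representation directly, pairs the interpolated kernel bound $\|E(s;q_1)\|\le cs^{-1+\epsilon\alpha/2}$ with $\|(q_2-q_1)\Dal u(t-s;q_2)\|_{L^2\II}\le \|q_2-q_1\|_{L^2\II}\,\|\Dal u(t-s;q_2)\|_{H^{2-\epsilon}\II}\le c\|q_2-q_1\|_{L^2\II}(t-s)^{-(2-\epsilon)\alpha/2}$ (Lemma \ref{lem:Dalu} plus Sobolev embedding with $2-\epsilon>d/2$), and evaluates the resulting Beta integral. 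The singular kernel $(t-s)^{-1}$ never arises because $A(q_1)$ is never applied to the convolution, and indeed only the $L^2\II$ version of the estimate is used later (in Theorem \ref{thm:cond-stab} the term $\|\Dal(u(T;q_2)-u(T;q_1))\|_{L^2\II}$ appears). You are therefore attempting a strictly stronger bound than what the paper actually establishes. Within that more ambitious plan, your treatment of the initial-datum piece and of the constant-in-$s$ summand via $\int_0^t A(q_1)E(t-s;q_1)\,\d s=I-F(t;q_1)$ is correct.

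The gap is in the difference summand. You propose to commute $A(q_1)^{\gamma}$ past $E(t-s;q_1)$ and land it on the bracket $\phi(s)-\phi(t)=(q_2-q_1)\int_s^t A(q_2)E(r;q_2)\psi_0\,\d r$. But that bracket carries the multiplier $q_2-q_1$, which is only bounded and continuous: multiplication by an $L^\infty\II$ function is not bounded on $\dot H^{2\gamma}\II$ for any $\gamma>0$, so $\|A(q_1)^{\gamma}[(q_2-q_1)\psi]\|_{L^2\II}$ cannot be controlled without imposing fractional Sobolev regularity on $q_2-q_1$ — and even then the estimate would not close in terms of $\|q_1-q_2\|_{L^2\II}$ alone, which is what the lemma asserts. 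The extra smoothing hidden in the time-integral representation must instead be cashed in as \emph{temporal} H\"older continuity measured in $L^\infty\II$, leaving the full $A(q_1)$ on the kernel: from the smoothing of $F(\cdot;q_2)$ one gets $\|\Dal u(s;q_2)-\Dal u(t;q_2)\|_{L^\infty\II}\le c\,(t-s)^{\gamma}s^{-\gamma-(1-\epsilon/2)\alpha}$ for small $\gamma>0$, hence $\|\phi(s)-\phi(t)\|_{L^2\II}\le c(t-s)^{\gamma}s^{-\gamma-(1-\epsilon/2)\alpha}\|q_1-q_2\|_{L^2\II}$, and pairing this with $\|A(q_1)E(t-s;q_1)\|\le c(t-s)^{-1}$ yields an integrable Beta integrand (choose $\gamma$ with $\gamma+(1-\epsilon/2)\alpha<1$) that returns the claimed rate. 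With that substitution your decomposition does deliver the $H^2\II$ bound; as written, the $A^{\gamma}$-transfer step does not go through.
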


\begin{proof}
Let $\phi(x,t)=\Dal(u(q_1)-u(q_2))(t)$. Then we note that $\phi(x,t)\in H_0^1\II$ satisfies
 \begin{equation}\label{PDE-phi}
    (\Dal-\Delta+q_1(x)) \phi(x,t) =(q_2-q_1)\partial_t^\alpha u(x,t;q_2)\quad \text{for}~~(x,t)\in \Omega\times(0,T]
 \end{equation}
with the initial condition
$ \phi(0) =  (q_2-q_1)v$.
We apply the solution representation \eqref{eqn:sol-rep} to derive
\begin{equation*}
\phi(t) = F(t;q_1) \phi(0) + \int_0^t E(s;q_1) (q_2-q_1) \partial_t^\alpha u(t-s;q_2) \,\d s.
\end{equation*}
Taking $L^2$ norm on the above relation, Lemma \ref{lem:op} and  Assumption \ref{ass:cond-1}  lead to for any $\epsilon\in(0,1)$
\begin{equation*}
\begin{split}
 \| \phi(t) \|_{L^2\II} &= \| F(t;q_1)\|\, \|(q_2 - q_1)v\|_{L^2\II} + \int_0^t \| E(s;q_1) \| \, \|(q_2-q_1) \partial_t^\alpha u(t-s;q_2)\|_{L^2\II} \,\d s\\
 &\le c \| q_2-q_1 \|_{L^2\II} \Big(t^{-\alpha} +  \int_0^t s^{-1+\epsilon\alpha/2} \|\partial_t^\alpha u(t-s;q_2)\|_{L^\infty\II} \,\d s\Big).
\end{split}
\end{equation*}
Here we use the estimate that $\| E(s;q_1) \| \le c s^{-1+\epsilon\alpha/2}$ which is a direct result of the second assertion of Lemma \ref{lem:op}
and the interpolation.
Then according to Lemma \ref{lem:Dalu} and the Sobolev embedding theorem, we obtain for $r>\frac{d}{2}$ and $d=1,2,3$,
\begin{equation*}
\begin{split}
 \| \phi(t) \|_{L^2\II}  &\le c \| q_2-q_1 \|_{L^2\II}
  \Big(t^{-\alpha} +  \int_0^t s^{-1+\epsilon\alpha/2} \|\partial_t^\alpha u(t-s;q_2)\|_{L^\infty\II} \,\d s\Big)\\
 &\le c \| q_2-q_1 \|_{L^2\II} \Big(t^{-\alpha} +  \int_0^t s^{-1+\epsilon\alpha/2} \|\partial_t^\alpha u(t-s;q_2)\|_{H^r\II} \,\d s\Big)\\
 &\le c \| q_2-q_1 \|_{L^2\II} \Big(t^{-\alpha} +  \int_0^t s^{-1+\epsilon\alpha/2} (t-s)^{-r\alpha/2} \,\d s\Big)\\
 &\le c \| q_2-q_1 \|_{L^2\II} \big(t^{-\alpha} + t^{\epsilon\alpha/2-r\alpha/2}\big).
\end{split}
\end{equation*}
Finally, the choice that $r=2-\epsilon$ leads to the estimate that
$$ \| \phi(t) \|_{L^2\II} \le c \| q_2-q_1 \|_{L^2\II} \big(t^{-\alpha} + t^{-\alpha(1-\epsilon)}\big) \le c \max(t^{-\alpha}, t^{-(1-\epsilon)\alpha}) \|q_1-q_2\|_{L^2\II}. $$
This completes the proof of the lemma.
\end{proof}

 Next, we state the main theorem of this section, which shows
 the conditional stability of the inverse potential problem.
\begin{theorem}\label{thm:cond-stab}
Let Assumption \ref{ass:cond-1} be valid, $q_1, q_2 \in \mathcal{Q}$, and $u(t;q_i)$ be the solution to \eqref{eqn:pde} with the potential $q_i$.
Then there exists $T_0\ge0$ such that for any $T\ge T_0$ there holds
$$ \| q_1 - q_2 \|_{L^2\II} \le C  \|  u(T;q_1) - u(T;q_2)  \|_{H^2\II},$$
where the constant $C$ 
is independent of $q_1$, $q_2$ and $T$.
\end{theorem}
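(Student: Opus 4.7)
The plan is to exploit the PDE \eqref{eqn:pde} evaluated at the terminal time $t=T$ to express $q_1-q_2$ pointwise in terms of the terminal difference $w := u(T;q_1)-u(T;q_2)$ and the fractional time derivative $\partial_t^\alpha w(T)$, and then to absorb the latter via the \textsl{a priori} estimate already established in Lemma \ref{lem:stab-0}, provided $T$ is large enough.

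First, I would subtract the two copies of \eqref{eqn:pde} at $t=T$. Since both solutions share the same data $f, v, b$, the inhomogeneous terms cancel, and writing $q_1 u(T;q_1) - q_2 u(T;q_2) = q_1 w + (q_1-q_2)u(T;q_2)$ yields
\begin{equation*}
\partial_t^\alpha w(T) - \Delta w + q_1 w + (q_1-q_2)\,u(T;q_2) = 0.
\end{equation*}
Lemma \ref{lem:u-reg}(ii) guarantees $u(T;q_2)\ge M_2>0$ pointwise in $\overline\Omega$, so I may divide by $u(T;q_2)$ and rearrange to obtain
\begin{equation*}
q_2-q_1 = \frac{\partial_t^\alpha w(T) - \Delta w + q_1 w}{u(T;q_2)}.
\end{equation*}
Taking $L^2(\Omega)$ norms, using $u(T;q_2)^{-1}\le M_2^{-1}$ and $q_1\le M_1$, and bounding $\|\Delta w\|_{L^2\II}$ and $\|w\|_{L^2\II}$ by $\|w\|_{H^2\II}$, I arrive at
\begin{equation*}
\|q_1-q_2\|_{L^2\II} \le C\,\|\partial_t^\alpha w(T)\|_{L^2\II} + C\,\|w\|_{H^2\II},
\end{equation*}
with a constant depending only on $M_1$ and $M_2$.

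The remaining task is to dominate $\|\partial_t^\alpha w(T)\|_{L^2\II}$ by something strictly smaller than $\|q_1-q_2\|_{L^2\II}$. Applying Lemma \ref{lem:stab-0} at $t=T$ (whose $H^2$ bound trivially implies an $L^2$ bound) gives
\begin{equation*}
\|\partial_t^\alpha w(T)\|_{L^2\II} \le c\,\max(T^{-\alpha},T^{-(1-\epsilon)\alpha})\,\|q_1-q_2\|_{L^2\II}.
\end{equation*}
Combining the two displayed estimates yields
\begin{equation*}
\|q_1-q_2\|_{L^2\II} \le C\,\|w\|_{H^2\II} + c\,\max(T^{-\alpha},T^{-(1-\epsilon)\alpha})\,\|q_1-q_2\|_{L^2\II}.
\end{equation*}
Since $\epsilon\in(0,\min(1,2-\tfrac{d}{2}))$ is fixed and positive, the prefactor $c\,\max(T^{-\alpha},T^{-(1-\epsilon)\alpha})$ tends to $0$ as $T\to\infty$, so I may choose $T_0$ so large that this prefactor does not exceed $\tfrac{1}{2}$ for all $T\ge T_0$. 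Absorbing that term onto the left then gives the claimed inequality with constant $2C$.

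The main obstacle, and the very reason the stability must be \emph{conditional}, is this $\partial_t^\alpha w(T)$ feedback term: it couples the unknown $q_1-q_2$ back into the bound, and only the algebraic decay of the solution operators at large $T$, established in Lemma \ref{lem:stab-0}, breaks the loop. This is why the theorem is stated only for $T\ge T_0$, and the same decay mechanism will later govern the contraction rate of the fixed-point iteration in the discrete error analysis.
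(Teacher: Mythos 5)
Your proof is correct, and it rests on the same two pillars as the paper's: the pointwise PDE identity at $t=T$ (valid in $L^2\II$ thanks to the regularity in Lemma \ref{lem:u-reg}), the lower bound $u(T;q_2)\ge M_2$, and crucially Lemma \ref{lem:stab-0} to absorb the $\Dal w(T)$ feedback term for $T\ge T_0$. The one place you diverge is the algebraic decomposition. The paper writes each $q_i=\bigl(f-\Dal u(T;q_i)+\Delta u(T;q_i)\bigr)/u(T;q_i)$ and expands $q_1-q_2$ into three cross-term quotients of the form $\bigl(u(T;q_1)\Dal u(T;q_2)-u(T;q_2)\Dal u(T;q_1)\bigr)/\bigl(u(T;q_1)u(T;q_2)\bigr)$, which forces it to invoke the uniform $L^\infty$ bounds on $u(T;q_1)$, $\Dal u(T;q_1)$ and $\Delta u(T;q_1)$ from Lemma \ref{lem:u-reg} to control the extra terms multiplying $\|u(T;q_1)-u(T;q_2)\|_{L^2\II}$. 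You instead subtract the two equations first, so the quotient appears only once, with denominator $u(T;q_2)$; the only constants you need are $M_1$ (from $q_1\in\Q$) and $M_2$. Your route is therefore slightly leaner in its hypotheses on the forward solution, at the cost of nothing; both yield the same absorption inequality and the same conclusion. One cosmetic point: Lemma \ref{lem:stab-0} gives an $H^2\II$ bound on $\Dal w(t)$, of which you only use the $L^2\II$ consequence, exactly as the paper does.
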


\begin{proof}
Recalling that, for $i=1,2$, $q_i$ could be written as
$$q_i = \frac{f-\Dal u(T;q_i) + \Delta u(T;q_i)}{u(T;q_i)}.$$
Then we split $q_1 - q_2$ into three parts:
\begin{equation*}
\begin{aligned}
 q_1 - q_2 
 & = f\frac{u(T;q_2)-u(T;q_1)}{u(T;q_1)u(T;q_2)}   + \frac{ u(T;q_1)\Dal u(T;q_2) -u(T;q_2)\Dal u(T;q_1)  }{u(T;q_1)u(T;q_2)}\\
 &\quad + \frac{u(T;q_2)\Delta u(T;q_1)-u(T;q_1)\Delta u(T;q_2)}{u(T;q_1)u(T;q_2)}. \\
   \end{aligned}
\end{equation*}
Using  Assumption \ref{ass:cond-1}, we conclude that $u_i \ge M_2>0$ and hence
$$ \Big\| f\frac{u(T;q_2)-u(T;q_1)}{u(T;q_1)u(T;q_2)} \Big\|_{L^2\II} \le \frac{\| f \|_{L^\infty\II}}{M_2^2} \| u(T;q_2)-u(T;q_1) \|_{L^2\II}.$$
Besides, we use the fact that $\|u_i(T)\|_{L^\infty\II}$ and $\|\Dal u_i (T)\|_{L^\infty\II}$
are bounded uniformly in $q$ (Lemma \ref{lem:u-reg}) and Lemma \ref{lem:stab-0} to derive for any $\epsilon$ close to $0$,
\begin{equation*}
\begin{aligned}
&\quad \Big\| \frac{ u(T;q_1)\Dal u(T;q_2) -u(T;q_2)\Dal u(T;q_1)  }{u(T;q_1)u(T;q_2)} \Big\|_{L^2\II} \\
&\le c \Big(\|  u(T;q_1)\|_{L^\infty\II} \| \Dal (u(T;q_2) -  u(T;q_1)) \|_{L^2\II} + \| \Dal u(T;q_1) \|_{L^\infty\II}\|  u(T;q_1) - u(T;q_2)\|_{L^2\II}\Big)\\
&\le c \Big( \max(T^{-\alpha},T^{-(1-\epsilon)\alpha}) \| q_1 -q_2 \|_{L^2\II} + \|  u(T;q_1) - u(T;q_2)\|_{L^2\II} \Big).
   \end{aligned}
\end{equation*}
Similarly, we apply the fact that $\|u_i(T)\|_{L^\infty\II}$ and $\|\Delta u_i (T)\|_{L^\infty\II}$ are bounded uniformly in $q_i$ (Lemma \ref{lem:u-reg}) to arrive at
\begin{equation*}
\begin{aligned}
&\quad \Big\|\frac{u(T;q_2)\Delta u(T;q_1)-u(T;q_1)\Delta u(T;q_2)}{u(T;q_1)u(T;q_2)}  \Big\|_{L^2\II} \\
&\le c \Big(\|  u(T;q_1)\|_{L^\infty\II} \| \Delta (u(T;q_2) -  u(T;q_1)) \|_{L^2\II} + \| \Delta u(T;q_1) \|_{L^\infty\II}\|  u(T;q_1) - u(T;q_2)\|_{L^2\II}\Big)\\
&\le c \Big( \| \Delta (u(T;q_1)-u(T;q_1)) \|_{L^2\II} + \|  u(T;q_1) - u(T;q_2)\|_{L^2\II} \Big).
   \end{aligned}
\end{equation*}
As a result, we arrive at
$$ \| q_1 - q_2  \|_{L^2\II} \le   c_1 \|  u(T;q_1) - u(T;q_2)  \|_{H^2\II} + c_2 \max(T^{-\alpha},T^{-(1-\epsilon)\alpha}) \| q_1 -q_2 \|_{L^2\II}. $$
Then for $T_0$ such that $c_2 \max(T_0^{-\alpha},T_0^{-(1-\epsilon)\alpha}) \le c_3$ for some constant $c_3\in(0,1)$, and $T\ge T_0$, we have
$$ \| q_1 - q_2  \|_{L^2\II} \le   \frac{c_1 }{1-c_3}  \|  u(T;q_1) - u(T;q_2)  \|_{H^2\II} . $$
This completes the proof of the lemma.
\end{proof}

 \section{Completely discrete scheme}\label{sec:fully}
In this section, we shall develop a fully discrete scheme for solving the inverse potential problem.
To this end, we shall introduce the time stepping method using convolution quadrature in the first part,
then discuss the spatial discretization using finite element method. A reconstruction algorithm will be presented
to recover the potential from the noisy observational data. Finally, we establish an \textsl{a priori} error bound
showing the way to choose the (space/time)  mesh sizes according to the noise level.

\subsection{Time stepping scheme for solving the direct problem}
The literature on the numerical approximation for the nonlocal-in-time subdiffusion
equation \eqref{eqn:pde} is vast, see e.g., \cite{JinLazarovZhou:2019} for an overview of existing schemes.
Here we apply the convolution quadrature to discretize the fractional derivative  on uniform grids.
Let $\{t_n=n\tau\}_{n=0}^N$ be a uniform partition of the time interval $[0,T]$,
with a time step size $\tau=T/N$.
The convolution quadrature (CQ) was first proposed by Lubich \cite{Lubich:1986}
for discretizing Volterra integral equations. This approach provides a systematic framework
to construct high-order numerical methods to discretize fractional derivatives, and has been the
foundation of many early works. The time stepping scheme for problem \eqref{eqn:pde}
reads: given $u^0(q)=v$, find $u^n(q) \in H^1\II$ such that $\gamma_0(u^n(q)) = b$ and
\begin{align}\label{eqn:step-0}
   \bar \partial_\tau^\alpha u^n(q) -\Delta u^n(q) + q u^n(q) = f \quad \text{with}~~ n=1,2,\ldots,N,
\end{align}
where $\bar\partial_\tau^\alpha \varphi^n$ denotes the
backward Euler CQ approximation (with $\varphi^j=\varphi(t_j)$) \cite{Lubich:1986}:
\begin{equation}\label{eqn:CQ-BE}
  \bar\partial_\tau^\alpha \varphi^n = \tau^{-\alpha} \sum_{j=0}^nb_j^{(\alpha)} (\varphi^n - \varphi^0) ,\quad\mbox{ with } (1-\xi)^\alpha=\sum_{j=0}^\infty b_j^{(\alpha)}\xi^j.
\end{equation}
Note that the weights $b_j^{(\alpha)}$ are given explicitly by
$b_j^{(\alpha)} = (-1)^j\frac{\Gamma(\alpha+1)}{\Gamma(\alpha-j+1)\Gamma(j+1)}$, and thus
$b_j^{(\alpha)} = (-1)^j(j!)^{-1}\alpha(\alpha-1)\cdots(\alpha-j+1)$, for $j\geq 1$,
from which it can be verified directly that $b_0^{(\alpha)}=1$ and $b_j^{(\alpha)}<0$ for $j\geq 1$.
In particular, when $\alpha = 1$, the operator $\bar\partial_\tau^\alpha$ reduces to the standard backward difference quotient:
$$  \bar\partial_\tau^1 \varphi^n  =  \frac{\varphi^n - \varphi^{n-1}}{\tau}, $$
and the scheme \eqref{eqn:CQ-BE} reduces to the standard backward Euler scheme.

Using the superposition principle, the time stepping solution in \eqref{eqn:step-0} could be written in the operational form as \cite{ZhangZhou:2020}
\begin{equation} \label{eqn:sol-rep-step}
\begin{aligned}
u^n(q)%
&= F_\tau(n;q)(v-D(q)b) + D(q)b +\tau\sum_{j=1}^{n} E_\tau(j;q) f\\
&= F_\tau(n;q)(v-D(q)b) + D(q)b +(I-F_\tau(n;q))A(q)^{-1} f.
\end{aligned}
\end{equation}
Here the time discrete operators $F_\tau(n;q)$ and $E_\tau(n;q)$ are defined by the discrete inverse Laplace transform:
\begin{equation}\label{eqn:op-step}
\begin{aligned}
F_\tau(n;q) &= \frac{1}{2\pi\mathrm{i}}\int_{\Gamma_{\theta,\sigma}^\tau } e^{zt_n} {e^{-z\tau}} \delta_\tau(e^{-z\tau})^{\alpha-1}({ \delta_\tau(e^{-z\tau})^\alpha}+A(q))^{-1}\,\d z,\\
E_\tau(n;q) &= \frac{1}{2\pi\mathrm{i}}\int_{\Gamma_{\theta,\sigma}^\tau } e^{zt_n} e^{-z\tau}({ \delta_\tau(e^{-z\tau})^\alpha}+A(q))^{-1}\,\d z,
\end{aligned}
\end{equation}
with $\delta_\tau(\xi)=(1-\xi)/\tau$ and the contour
$\Gamma_{\theta,\sigma}^\tau :=\{ z\in \Gamma_{\theta,\sigma}:|\Im(z)|\le {\pi}/{\tau} \}$ where $\theta\in(\pi/2,\pi)$ is close to $\pi/2$
(oriented with an increasing imaginary part).
The next lemma gives elementary properties of the kernel $\delta_\tau(e^{-z\tau})$. The detailed proof has been given in
\cite[Lemma B.1]{JinLiZhou:SISC2017}.
\begin{lemma}\label{lem:delta}
For a fixed $\theta'\in(\pi/2,\pi/\alpha)$, there exists $\theta\in(\pi/2,\pi)$ and
positive constants $c,c_1,c_2$ independent of $\tau$ such that for all $z\in \Gamma_{\theta,\sigma}^\tau$,
\begin{equation*}
\begin{aligned}
& c_1|z|\leq
|\delta_\tau(e^{-z\tau})|\leq c_2|z|,
&&\delta_\tau(e^{-z\tau})\in \Sigma_{\theta'}, \\
& |\delta_\tau(e^{-z\tau})-z|\le c\tau |z|^{2},
 &&|\delta_\tau(e^{-z\tau})^\alpha-z^\alpha|\leq c\tau |z|^{1+\alpha}.
 \end{aligned}
\end{equation*}
\end{lemma}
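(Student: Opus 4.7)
The plan is to reduce all four claims to the study of the single entire function $g(w) := (1 - e^{-w})/w$ (with $g(0) := 1$), via the factorization
\[
\delta_\tau(e^{-z\tau}) \;=\; z\, g(z\tau).
\]
The crucial geometric observation is that on the truncated contour $\Gamma_{\theta,\sigma}^\tau$ one has $|\Im z| \le \pi/\tau$ together with $|\arg z| \le \theta$, so the rescaled variable $w := z\tau$ is confined to a bounded region $K_\theta \subset \{|\arg w| \le \theta\}$ that is independent of $\tau$. In particular $|w|$ is uniformly bounded on $\Gamma_{\theta,\sigma}^\tau$, which is exactly what makes Taylor-type expansions applicable.

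For the two-sided modulus bound $c_1|z| \le |\delta_\tau(e^{-z\tau})| \le c_2|z|$, I would note that $g$ is continuous on the closure of $K_\theta$ and zero-free there: the zeros of $1 - e^{-w}$ lie at $\{2\pi i k : k \in \mathbb{Z}\}$, and by choosing $\theta$ close enough to $\pi/2$ the bounded set $K_\theta$ encloses only the removable singularity $w = 0$ where $g(0) = 1$. Compactness then supplies uniform constants. For the sector inclusion $\delta_\tau(e^{-z\tau}) \in \Sigma_{\theta'}$, since $\arg \delta_\tau(e^{-z\tau}) = \arg z + \arg g(z\tau)$ and $g(0) = 1$, a continuity/compactness argument shows $|\arg g(w)| \to 0$ as the opening angle $\theta \searrow \pi/2$; choosing $\theta \in (\pi/2,\pi)$ close enough to $\pi/2$ so that $\sup_{w \in K_\theta}|\arg g(w)| \le \theta' - \theta$ delivers the claim.

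The difference estimate $|\delta_\tau(e^{-z\tau}) - z| \le c\tau|z|^2$ is immediate from the Taylor expansion $1 - e^{-w} = w - w^2/2 + O(w^3)$: on the compact set $K_\theta$ one has $|1 - e^{-w} - w| \le C|w|^2$, hence dividing by $\tau$ and substituting $w = z\tau$ yields $|\delta_\tau(e^{-z\tau}) - z| = \tau^{-1}|1 - e^{-z\tau} - z\tau| \le C\tau|z|^2$. The fractional power estimate then follows by applying the mean value theorem to $\zeta \mapsto \zeta^\alpha$ along the straight segment from $z$ to $\delta_\tau(e^{-z\tau})$; by parts (i)--(ii) both endpoints lie in $\Sigma_{\theta'}$ with $\theta' < \pi/\alpha$, so the segment stays in a sector avoiding the branch cut, and $|(\zeta^\alpha)'| = \alpha|\zeta|^{\alpha - 1} \le C|z|^{\alpha - 1}$ uniformly on the segment. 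Combined with the previous estimate this gives $|\delta_\tau(e^{-z\tau})^\alpha - z^\alpha| \le C|z|^{\alpha-1} \cdot \tau|z|^2 = c\tau|z|^{1+\alpha}$.

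The hard part will be part (ii): the sector inclusion is what dictates the precise choice of $\theta$ in terms of $\theta'$ (and ultimately in terms of $\alpha$ via the requirement $\theta' < \pi/\alpha$), and it is also the prerequisite that makes the branch of $(\cdot)^\alpha$ in part (iv) unambiguous. Once the auxiliary function $g$ is understood on $K_\theta$, parts (i), (iii), and (iv) reduce to standard estimates on a compact set.
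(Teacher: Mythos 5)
First, note that the paper does not actually prove this lemma: it defers entirely to \cite[Lemma B.1]{JinLiZhou:SISC2017}. So your argument has to stand on its own, and its overall strategy --- rescaling to $w=z\tau$, which on the truncated contour lies in the fixed compact set $K_\theta=\{\,|\arg w|\le\theta,\ |w|\le\pi/\sin\theta\,\}$, and studying $g(w)=(1-e^{-w})/w$ there --- is indeed the standard route and correctly disposes of the two-sided modulus bound and of $|\delta_\tau(e^{-z\tau})-z|\le c\tau|z|^2$. The genuine gap is exactly where you flagged the difficulty: the sector inclusion. Your mechanism is the triangle inequality $|\arg\delta_\tau(e^{-z\tau})|\le|\arg z|+\sup_{K_\theta}|\arg g|$ combined with the claim that $\sup_{w\in K_\theta}|\arg g(w)|\to0$ as $\theta\searrow\pi/2$. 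That claim is false: $K_\theta$ does not shrink to the origin as $\theta\searrow\pi/2$, it decreases to the half-disk $\{|\arg w|\le\pi/2,\ |w|\le\pi\}$, and the truncation endpoint of the upper ray, $w=(\pi/\sin\theta)e^{i\theta}\to i\pi$, is always on the contour. A direct computation gives $1-e^{-iy}=2\sin(y/2)e^{i(\pi/2-y/2)}$, hence $\arg g(iy)=-y/2$ and $\arg g(i\pi)=-\pi/2$. Thus $\sup_{K_\theta}|\arg g|\ge\pi/2-o(1)$, and your bound only yields $|\arg\delta_\tau(e^{-z\tau})|\le\theta+\pi/2\approx\pi$, which does not place $\delta_\tau(e^{-z\tau})$ in $\Sigma_{\theta'}$ for $\theta'\in(\pi/2,\pi)$. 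The inclusion is nonetheless true, but only because of a cancellation that the triangle inequality discards: where $|\arg g(w)|$ is large its sign is opposite to that of $\arg w$ (at $w=i\pi$ one has $\arg z=\pi/2$, $\arg g=-\pi/2$, and indeed $\delta_\tau(e^{-z\tau})=2/\tau>0$). A correct proof must estimate $\arg\big(1-e^{-w}\big)$ directly along the rays $\arg w=\pm\theta$, e.g.\ by showing that $t\mapsto\arg\big(1-e^{-te^{i\theta}}\big)$ equals $\theta$ in the limit $t\to0^+$ and decreases (or by separately controlling real and imaginary parts of $1-e^{-w}$).

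The fractional-power estimate inherits this gap, since it needs part (ii) to fix the branch, and its segment argument is also incomplete as stated: $\Sigma_{\theta'}$ with $\theta'>\pi/2$ is not convex, so membership of the two endpoints in $\Sigma_{\theta'}$ does not by itself keep the segment $[z,\delta_\tau(e^{-z\tau})]$ away from the branch cut or from the origin. This can be repaired --- the segment is $z\cdot[1,g(w)]$, and once one knows $\arg g(w)\in[-\pi/2-\varepsilon,\varepsilon]$ on $K_\theta$, the union of the segments $[1,g(w)]$ is a compact set avoiding $0$ and the negative real axis, so $|\zeta|\ge c|z|$ and $\zeta$ stays in a sector of half-angle less than $\pi$ throughout --- but this again rests on the very control of $\arg g$ that is missing. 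Parts (i) and (iii) of your argument are correct as written.
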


For any $q\in \Q$, Lemma \ref{lem:delta} and resolvent estimate of elliptic operator \eqref{eqn:resol} immediately lead to
\begin{equation}\label{eqn:relso-step}
\|(\delta_\tau(e^{-z\tau})^\al+A(q))^{-1}\| \le C\min(|z^{-\al}|,\lambda^{-1}),  \quad \forall z \in \Sigma_{\phi},
  \,\,\,\forall\,\phi\in(0,\pi),
\end{equation}
for a constant $C$ independent of $q$.
Next we give some useful properties of $F_\tau(n;q)$ and $E_\tau(n;q)$.

The first lemma provides an estimate for $F_\tau(n;q)- F(t_n;q)$.
It has been proved in the earlier work \cite[Lemma 4.2, eq. (4.7)]{ZhangZhou:2020},
so we omit its proof here.
\begin{lemma}\label{lem:fully-approx}
Let $F_\tau(n;q)$ and $E_\tau(n;q)$ be defined as in \eqref{eqn:op-step}, and  $\lambda$ be the smallest  eigenvalue of $-\Delta$ with homogeneous Dirichlet boundary condition. Then  for $q\in \mathcal Q$, there holds
\begin{equation*}
\| (F_\tau(n;q)- F(t_n;q))v\|_{L^2\II}\le c \,n^{-1} \min(1,\lambda^{-1}t_n^{-\alpha})\|v\|_{L^2\II}\quad \text{for all} ~~n\ge 1,
\end{equation*}
and
\begin{equation*}
\| A(q)(F_\tau(n;q)- F(t_n;q))v\|_{L^2\II}\le c \,n^{-1} t_n^{-\alpha} \|v\|_{L^2\II}\quad \text{for all} ~~n\ge 1,
\end{equation*}
where the constants are independent of $q$, $\tau$ and $t_n$.
\end{lemma}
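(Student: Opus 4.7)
The plan is to estimate the difference $F_\tau(n;q) - F(t_n;q)$ by a contour integration argument, tracking carefully the perturbation between the continuous symbol $z^{\alpha-1}(z^\alpha + A(q))^{-1}$ and its discrete analogue $e^{-z\tau}\delta_\tau(e^{-z\tau})^{\alpha-1}(\delta_\tau(e^{-z\tau})^\alpha + A(q))^{-1}$. The first step is to rewrite
\[
F(t_n;q) = \frac{1}{2\pi\mathrm{i}} \int_{\Gamma_{\theta,\kappa}} e^{zt_n} z^{\alpha-1}(z^\alpha + A(q))^{-1}\,\mathrm{d}z,
\]
with $\kappa = 1/t_n$, and split this contour as $\Gamma_{\theta,\kappa} = \Gamma_{\theta,\kappa}^\tau \cup (\Gamma_{\theta,\kappa}\setminus \Gamma_{\theta,\kappa}^\tau)$. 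The same truncated contour $\Gamma_{\theta,\kappa}^\tau$ defines $F_\tau(n;q)$, so the difference decomposes into a \emph{symbol perturbation integral} on the common part $\Gamma_{\theta,\kappa}^\tau$ and a \emph{tail integral} over the high-frequency part $|\Im z| \ge \pi/\tau$.

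On the common contour $\Gamma_{\theta,\kappa}^\tau$ I would add and subtract intermediate terms and use the algebraic identity
\[
R_1^{-1} - R_2^{-1} = R_1^{-1}(R_2 - R_1) R_2^{-1}
\]
with $R_1 = z^\alpha + A(q)$ and $R_2 = \delta_\tau(e^{-z\tau})^\alpha + A(q)$, together with the perturbation estimates in Lemma~\ref{lem:delta}, namely $|\delta_\tau(e^{-z\tau}) - z| \le c\tau|z|^2$, $|\delta_\tau(e^{-z\tau})^\alpha - z^\alpha| \le c\tau |z|^{1+\alpha}$ and $|e^{-z\tau}-1| \le c\tau|z|$. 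Combining these with the continuous resolvent bound \eqref{eqn:resol} and its discrete analogue \eqref{eqn:relso-step} (both independent of $q\in\mathcal{Q}$), the integrand is bounded, for the first estimate, by $c\tau \min(1,\lambda^{-1}|z|^{-\alpha})$. Integrating against $e^{zt_n}$ along $\Gamma_{\theta,\kappa}^\tau$ with $\kappa = 1/t_n$ produces a factor of $\tau/t_n = n^{-1}$ and yields the claimed bound $c n^{-1}\min(1,\lambda^{-1}t_n^{-\alpha})\|v\|_{L^2(\Omega)}$. For the second estimate I would multiply the integrand by $A(q)$; because $A(q)(z^\alpha+A(q))^{-1} = I - z^\alpha(z^\alpha+A(q))^{-1}$ and analogously for the discrete resolvent, the bound picks up an extra factor of $|z|^\alpha$, which integrates to $t_n^{-\alpha}$ in place of $\min(1,\lambda^{-1}t_n^{-\alpha})$.

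For the tail part $\Gamma_{\theta,\kappa}\setminus\Gamma_{\theta,\kappa}^\tau$ (where $|\Im z| \ge \pi/\tau$, hence $|z|\ge c/\tau$), the continuous and discrete symbols are estimated separately using the resolvent bounds \eqref{eqn:resol} and \eqref{eqn:relso-step}. Along the straight rays at angle $\pm\theta$ with $\theta \in (\pi/2,\pi)$ one has $\Re(z) \le -c|z|$, so $|e^{zt_n}|$ decays exponentially on the tail and the tail contribution is dominated by any power $\tau^m$; in particular it is bounded by $c n^{-1}$ times the desired smoothing factor.

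The main obstacle, as usual in CQ error analysis, will be the careful bookkeeping on the common contour: one must maintain uniformity in $q\in\mathcal{Q}$ (ensured by the fact that both \eqref{eqn:resol} and \eqref{eqn:relso-step} hold with constants independent of $q$), and at the same time obtain the sharp $\min(1,\lambda^{-1}t_n^{-\alpha})$ decay for the first bound rather than just $t_n^{-\alpha}$. This sharp decay is obtained by using the better of the two resolvent bounds $\min(|z|^{-\alpha},\lambda^{-1})$ inside the symbol perturbation estimate and choosing the contour radius $\kappa = 1/t_n$ so that both regimes of the minimum are exercised consistently.
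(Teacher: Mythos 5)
The paper does not actually prove this lemma --- it defers to \cite{ZhangZhou:2020} (Lemma 4.2, eq.\ (4.7)) --- and the argument there is precisely the contour--perturbation strategy you outline: take both operators over the truncated contour with radius $\kappa=\sigma=1/t_n$, compare the continuous and discrete symbols termwise via the resolvent identity and Lemma~\ref{lem:delta}, and dispose of the high-frequency tail $|\Im z|\ge\pi/\tau$ by exponential decay of $e^{zt_n}$ along the rays (which, since $t_n\le T$, is indeed dominated by $n^{-1}$ times either smoothing factor). So your route is the intended one.

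However, one displayed intermediate bound is wrong as written, and taken literally it derails the first estimate. The symbol difference on the common contour is bounded by $c\tau\min(1,\lambda^{-1}|z|^{\alpha})$, not $c\tau\min(1,\lambda^{-1}|z|^{-\alpha})$: each of the three terms in the telescoping of $e^{-z\tau}\delta_\tau(e^{-z\tau})^{\alpha-1}(\delta_\tau(e^{-z\tau})^{\alpha}+A(q))^{-1}-z^{\alpha-1}(z^{\alpha}+A(q))^{-1}$ pairs a perturbation factor of size $c\tau|z|^{\alpha}$ or $c\tau|z|^{1+\alpha}$ with a resolvent factor $\min(|z|^{-\alpha},\lambda^{-1})$ and a prefactor $|z|^{\alpha-1}$, and the product is $c\tau|z|^{\alpha}\min(|z|^{-\alpha},\lambda^{-1})=c\tau\min(1,\lambda^{-1}|z|^{\alpha})$. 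With your exponent, the minimum evaluated near the dominant scale $|z|\sim t_n^{-1}$ would read $\min(1,\lambda^{-1}t_n^{\alpha})$, which equals $1$ for large $t_n$ and therefore cannot produce the claimed decay $\min(1,\lambda^{-1}t_n^{-\alpha})$; the step ``integrating produces a factor $\tau/t_n$ and yields the claimed bound'' silently assumes the correct exponent. With the corrected bound, the substitution $s=\rho t_n$ in $\int_{1/t_n}^{\infty}e^{-c\rho t_n}\min(1,\lambda^{-1}\rho^{\alpha})\,\mathrm{d}\rho$, together with $\min(1,\lambda^{-1}s^{\alpha}t_n^{-\alpha})\le(1+s^{\alpha})\min(1,\lambda^{-1}t_n^{-\alpha})$, gives $ct_n^{-1}\min(1,\lambda^{-1}t_n^{-\alpha})$ and hence the stated $n^{-1}\min(1,\lambda^{-1}t_n^{-\alpha})$ after multiplying by $\tau$. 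The second estimate goes through as you describe: applying $A(q)$ and using $A(q)(z^{\alpha}+A(q))^{-1}=I-z^{\alpha}(z^{\alpha}+A(q))^{-1}$ upgrades the integrand bound to $c\tau|z|^{\alpha}$, which integrates to $c\tau t_n^{-1-\alpha}=cn^{-1}t_n^{-\alpha}$. So the plan is sound; only that one exponent needs fixing.
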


The next lemma provides some smoothing and asymptotic properties of operators  $F_\tau(t;q)$ and $E_\tau(t;q)$.
This is a discrete analogue to Lemma \ref{lem:op}. The proof follows from the solution representation
\eqref{eqn:sol-rep-step}-\eqref{eqn:op-step},
Lemma \ref{lem:delta}, the resolvent estimate \eqref{eqn:relso-step},
and the same argument of the proof of Lemma \ref{lem:op} in
\cite[Theorem 6.4 and 3.2]{Jin:2021book}.

\begin{lemma}\label{lem:op-step}
Let $F_\tau(n;q)$ and $E_\tau(n;q)$ be defined as  \eqref{eqn:op-step}, and $\lambda$ be the smallest eigenvalue of $-\Delta$ with homogeneous boundary condition.
Then for $q\in \mathcal Q$, there holds
\begin{equation*}
\|A(q) F_\tau(n;q)v\|_{L^2\II} + t_n^{1-\alpha}\|A(q) E_\tau(n;q)v\|_{L^2\II} \le c  t_n^{-\al}\|v\|_{L^2\II}
\end{equation*}
and
\begin{equation*}
\|F_\tau(n;q)v\|_{L^2\II}+ t_n^{1-\al} \|E_\tau(n;q)v||_{L^2\II}\le c\min(1,\lambda^{-1} t_n^{-\al})\|v\|_{L^2\II},~~ n\ge 1.
\end{equation*}
Here $c$ is the generic constant independent of $\tau$, $t_n$ and $q$.
\end{lemma}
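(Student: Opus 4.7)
The plan is to adapt the contour-integral argument used for the continuous analogue Lemma \ref{lem:op} to the discrete setting, with the comparison between the discrete kernel $\delta_\tau(e^{-z\tau})$ and $z$ supplied by Lemma \ref{lem:delta} and the resolvent bound supplied by \eqref{eqn:relso-step}. First I would observe that, for any $z \in \Gamma_{\theta,\sigma}^\tau$, Lemma \ref{lem:delta} gives $c_1|z| \le |\delta_\tau(e^{-z\tau})| \le c_2|z|$ and places $\delta_\tau(e^{-z\tau})^\alpha$ in a sector $\Sigma_{\alpha\theta'}$ with $\alpha\theta' < \pi$, so that \eqref{eqn:relso-step} yields
\begin{equation*}
\|(\delta_\tau(e^{-z\tau})^\alpha + A(q))^{-1}\| \le c\min(|z|^{-\alpha},\lambda^{-1})
\end{equation*}
with constant $c$ independent of $\tau$ and $q$.

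For the first inequality I would exploit the algebraic identity $A(q)(\mu + A(q))^{-1} = I - \mu(\mu + A(q))^{-1}$ with $\mu = \delta_\tau(e^{-z\tau})^\alpha$, which combined with the resolvent bound above gives $\|A(q)(\mu + A(q))^{-1}\| \le c$ uniformly in $z \in \Gamma_{\theta,\sigma}^\tau$ and $q$. Substituting into the representations \eqref{eqn:op-step} produces
\begin{equation*}
\|A(q) F_\tau(n;q) v\|_{L^2\II} \le c\|v\|_{L^2\II}\int_{\Gamma_{\theta,\sigma}^\tau} |e^{zt_n}|\,|z|^{\alpha-1}\,|dz|,
\end{equation*}
together with an analogous bound for $\|A(q) E_\tau(n;q) v\|_{L^2\II}$ in which $|z|^{\alpha-1}$ is replaced by $1$. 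I would then choose $\sigma = 1/t_n$, which is admissible since $n \ge 1$ forces $1/t_n \le 1/\tau \le \pi/\tau$, and split the contour into the circular arc $|z| = 1/t_n$ and the two rays $\arg z = \pm\theta$. The decay $|e^{zt_n}| = e^{-|z| t_n |\cos\theta|}$ along the rays and the uniform bound on the arc give the respective integral values of order $t_n^{-\alpha}$ and $t_n^{-1}$, which is exactly the first claimed estimate.

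For the smoothing bound I would retain the full minimum in the resolvent estimate and integrate along the same contour with $\sigma = 1/t_n$. Taking $|z|^{-\alpha}$ inside the minimum gives $\int|e^{zt_n}||z|^{-1}|dz| \le c$ for $F_\tau$ and $\int|e^{zt_n}||z|^{-\alpha}|dz| \le c t_n^{\alpha-1}$ for $E_\tau$, producing the constant-one part of the claim; taking $\lambda^{-1}$ yields the factors $c\lambda^{-1}t_n^{-\alpha}$ and $c\lambda^{-1}t_n^{-1}$ respectively, from which the minimum form of the bound follows. The main technical obstacle I anticipate is controlling the truncation $|\Im z| \le \pi/\tau$ built into $\Gamma_{\theta,\sigma}^\tau$: one must verify that the portions of the deformed rays that would lie outside this strip contribute only negligibly, which comes down to exploiting the exponential decay along the rays together with the condition $t_n \ge \tau$. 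Once this technicality is handled, the remaining estimates proceed exactly as for the continuous operators treated in \cite[Theorems 6.4 and 3.2]{Jin:2021book}.
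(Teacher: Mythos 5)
Your proposal is correct, and for the $E_\tau(n;q)$ estimates it is essentially identical to the paper's proof: both integrate over $\Gamma_{\theta,\sigma}^\tau$ with $\sigma=t_n^{-1}$, use the identity $A(q)(\mu+A(q))^{-1}=I-\mu(\mu+A(q))^{-1}$ with $\mu=\delta_\tau(e^{-z\tau})^\alpha$ together with the discrete resolvent bound \eqref{eqn:relso-step} and Lemma \ref{lem:delta}, and split the contour into the arc and the rays. Where you diverge is in the $F_\tau(n;q)$ bounds: the paper does not redo the contour integral there, but instead writes $F_\tau(n;q)=\bigl(F_\tau(n;q)-F(t_n;q)\bigr)+F(t_n;q)$ and invokes the already-established discretization error estimate of Lemma \ref{lem:fully-approx} together with the continuous smoothing estimates of Lemma \ref{lem:op}, obtaining a factor $c(n^{-1}+1)$ that is absorbed into the constant. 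Your direct contour computation for $F_\tau$ (with the extra kernel factor $|z|^{\alpha-1}$ yielding $t_n^{-\alpha}$ on the rays and $\sigma^{\alpha}$ on the arc) is equally valid and is in fact the route the paper's own preamble advertises; it has the advantage of being self-contained and uniform across all four bounds, whereas the paper's shortcut reuses existing lemmas at the cost of routing the argument through the continuous operators. One small remark: the truncation $|\Im z|\le\pi/\tau$ that you flag as the main technical obstacle is actually harmless for these upper bounds, since the representation \eqref{eqn:op-step} is exact on the truncated contour and restricting the domain of integration can only decrease the absolute-value integrals you estimate; no separate verification of negligibility is needed.
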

\begin{proof}
The asymptotics of $A(q)F_\tau(n;q)$ could be derived directly
from Lemmas \ref{lem:op} and \ref{lem:fully-approx}:
\begin{equation*}
\begin{aligned}
\|A(q)F_\tau(n;q)v\|_{L^2\II}& \le \|A(q)(F_\tau(n;q) - F(t_n;q))v\|_{L^2\II} +  \|A(q)F(t_n;q)v\|_{L^2\II}\\
&\le c  (n^{-1} + 1) t_n^{-\al}\|v\|_{L^2\II}  \le ct_n^{-\al}\|v\|_{L^2\II}.
\end{aligned}
\end{equation*}
Similarly, for $F_\tau(n;q)$, we apply Lemmas \ref{lem:op} and \ref{lem:fully-approx} again to derive
\begin{equation*}
\begin{aligned}
\|F_\tau(n;q)v\|_{L^2\II}& \le \|(F_\tau(n;q) - F(t_n;q))v\|_{L^2\II} +  \|F(t_n;q)v\|_{L^2\II}\\
&\le c    (n^{-1} + 1) \min(1,\lambda^{-1} t_n^{-\al})\|v\|_{L^2\II}
\le c \min(1,\lambda^{-1} t_n^{-\al}) \|v\|_{L^2\II}.
\end{aligned}
\end{equation*}

Next, we turn to the estimate of $A(q)E_\tau(n;q)$.
Using the representation \eqref{eqn:op-step}, resolvent estimate \eqref{eqn:relso-step}
and Lemma \ref{lem:delta}, we derive
\begin{equation*}
\begin{aligned}
\|A(q)E_\tau(n;q)v\|_{L^2\II} &\le  c\int_{\Gamma_{\theta,\sigma}^\tau} |e^{zt_n}| |e^{-z\tau}| \|A(q)(\delta_\tau(e^{-z\tau})^\al+A(q))^{-1}v\|_{L^2\II} |\d z|\\
 &\le  c\int_{\Gamma_{\theta,\sigma}^\tau} |e^{zt_n}|  \Big( \| v \|_{L^2\II} +  |\delta_\tau(e^{-z\tau})^\al|\| (\delta_\tau(e^{-z\tau})^\al+A(q))^{-1}v\|_{L^2\II} \Big)|\d z|\\
&\le   c \|v\|_{L^2\II} \int_{\Gamma_{\theta,\sigma}^\tau} |e^{zt_n}|  |\d z|
 \le   c\|v\|_{L^2\II} \left(\int_\sigma^\infty e^{-c\rho  t_n} d\rho + c\sigma\int_{-\theta}^\theta  \d \psi\right)\le c\sigma.
\end{aligned}
\end{equation*}
Then we let  $\sigma = t_n^{-1}$ to derive the desired estimate for $A(q)E_\tau(n;q)$.

The estimate for $E_\tau(n;q)$ could be derived using similar argument. By letting
$\sigma = t_n^{-1}$, we apply the resolvent estimate \eqref{eqn:relso-step}
and Lemma \ref{lem:delta} to deduce
\begin{equation*}
\begin{aligned}
\|E_\tau(n;q)v\|_{L^2\II} &\le c\int_{\Gamma_{\theta,\sigma}^\tau} |e^{zt_n}|  \|(\delta_\tau(e^{-z\tau})^\al+A(q))^{-1}v\|_{L^2\II} |\d z|\\
&\le c\|v\|_{L^2\II} \int_{\Gamma_{\theta,\sigma}^\tau} |e^{zt_n}| \min(|z|^{-\al},\lambda^{-1})|\d z|\\
&\le c\|v\|\min(t_n^{\al-1},\lambda^{-1}t_n^{-1}).
\end{aligned}
\end{equation*}
Then we complete the  proof of Lemma \ref{lem:op-step}.
\end{proof}

Next, we are ready to show some \textsl{a priori} estimate of the time stepping solution.
\begin{lemma}\label{lem:bDalun}
Let  Assumption \ref{ass:cond-1} be valid and $q\in \Q$. Then the solution $u^n(q)$ to the time stepping scheme \eqref{eqn:step-0} satisfies
 \begin{equation*}
 \| u^n (q) \|_{L^\infty\II}  \le c ~~\text{for all}~~n=1,2,\ldots,N.
 \end{equation*}
Moreover, there holds for all $s\in[0,2]$,
 \begin{equation*}
  \|\bDal^\alpha u^n(q)\|_{H^s\II}  \le c \min(t_n^{-s\alpha/2},t_n^{-\alpha}) ~~\text{for}~~n=1,2,\ldots,N.
 \end{equation*}
Here the generic constants are independent of $\tau$, $t_n$ and $q$.
\end{lemma}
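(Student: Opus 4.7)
The plan is to establish the two bounds as time-discrete analogs of Lemmas \ref{lem:u-reg}(i) and \ref{lem:Dalu}, respectively, using the operational representation \eqref{eqn:sol-rep-step} together with the smoothing estimates of Lemma \ref{lem:op-step}.

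\textbf{Step 1 (Uniform $L^\infty$ bound).} Mirroring the continuous argument, I would introduce the auxiliary time-stepping sequence $\phi^n$ defined as the CQ-BE discretization of \eqref{PDE_q_0}:
\begin{equation*}
\bar\partial_\tau^\alpha \phi^n - \Delta \phi^n = f, \qquad \phi^0 = v, \qquad \phi^n|_{\partial\Omega} = b.
\end{equation*}
Because $q \ge 0$, a discrete comparison principle for the CQ-BE scheme (cf.\ \cite[Chapter 6]{Jin:2021book}) gives $0 \le u^n(q) \le \phi^n$ pointwise. The problem is then reduced to bounding $\|\phi^n\|_{L^\infty\II}$ uniformly in $n$ and $\tau$, which I would obtain by a discrete maximal $L^p$-regularity estimate combined with a Sobolev-type embedding from the discrete regularity class $W^{\alpha,q}_\tau(L^2\II)\cap\ell^q(H^2\II)$ into $C([0,T]\times\overline\Omega)$ for $q$ large enough---the exact discrete counterpart of the continuous step used in the proof of Lemma \ref{lem:u-reg}(i).

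\textbf{Step 2 (Representation of $\bar\partial_\tau^\alpha u^n(q)$).} The key to the second bound is the identity
\begin{equation*}
\bar\partial_\tau^\alpha u^n(q) = F_\tau(n;q)\bigl(f+\Delta v - q v\bigr),
\end{equation*}
the discrete analog of \eqref{eqn:Dalu-4}. To derive it, I would rewrite \eqref{eqn:sol-rep-step} as
\begin{equation*}
u^n(q) = F_\tau(n;q)\bigl(v - D(q)b - A(q)^{-1}f\bigr) + D(q)b + A(q)^{-1}f,
\end{equation*}
notice that the last two terms are independent of $n$ and hence annihilated by $\bar\partial_\tau^\alpha$, and apply the elementary relation $\bar\partial_\tau^\alpha F_\tau(n;q)\psi = -A(q) F_\tau(n;q)\psi$ for $n\ge 1$ (this simply expresses that $F_\tau(n;q)\psi$ solves the homogeneous CQ-BE scheme with initial datum $\psi$). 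Since $v-D(q)b \in H^2\II \cap H_0^1\II$ with $A(q)(v-D(q)b) = -\Delta v + q v$ in the interior, and $A(q)$ commutes with $F_\tau(n;q)$ by the resolvent representation \eqref{eqn:op-step}, the desired identity follows.

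\textbf{Step 3 (Smoothing and interpolation).} With this representation in hand, the $s=0$ case follows at once from the second estimate of Lemma \ref{lem:op-step}, giving $\|\bar\partial_\tau^\alpha u^n(q)\|_{L^2\II} \le c \min(1,t_n^{-\alpha})$. For $s=2$, I would combine the uniform-in-$q$ elliptic regularity \eqref{eqn:equiv-n} with the first estimate of Lemma \ref{lem:op-step} to obtain $\|\bar\partial_\tau^\alpha u^n(q)\|_{H^2\II} \le c t_n^{-\alpha}$. The intermediate range $s\in(0,2)$ is then handled by the standard interpolation inequality $\|\cdot\|_{H^s\II}\le \|\cdot\|_{L^2\II}^{1-s/2}\|\cdot\|_{H^2\II}^{s/2}$, which produces exactly the claimed factor $\min(t_n^{-s\alpha/2}, t_n^{-\alpha})$ after distinguishing the regimes $t_n\le 1$ and $t_n > 1$.

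\textbf{Main obstacle.} Step 1 is the delicate one. Steps 2 and 3 are essentially algebraic once the operational representation is available, and they inherit uniformity in $q$ from Lemma \ref{lem:op-step}. By contrast, the uniform discrete $L^\infty$ bound demands both a discrete comparison principle compatible with the inhomogeneous Dirichlet data and discrete maximal $L^p$-regularity constants independent of $\tau$ and $N$; these are well-documented but nontrivial inputs from the CQ-BE stability theory, and care is needed to ensure the constants do not depend on $q\in\mathcal Q$.
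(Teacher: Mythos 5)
Your Steps 2 and 3 coincide with the paper's argument: the paper also sets $w^n(q)=\bar\partial_\tau^\alpha u^n(q)$, observes that it solves the homogeneous scheme \eqref{eqn:bDalu-step} with datum $w^0(q)=f+\Delta v-qv$, obtains the representation $\bar\partial_\tau^\alpha u^n(q)=F_\tau(n;q)(f+\Delta v-qv)$, and then applies Lemma \ref{lem:op-step} together with \eqref{eqn:equiv-n} for $s=0$ and $s=2$ and interpolates in between. Your algebraic derivation of the representation (applying $\bar\partial_\tau^\alpha$ to \eqref{eqn:sol-rep-step} and using $\bar\partial_\tau^\alpha F_\tau(n;q)\psi=-A(q)F_\tau(n;q)\psi$) is an equivalent packaging of the same fact. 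Where you genuinely diverge is Step 1, and this is worth reconsidering: the paper does \emph{not} mimic the continuous comparison-plus-maximal-regularity argument of Lemma \ref{lem:u-reg}(i). Instead it bounds $\|u^n(q)\|_{H^2\II}$ directly and uniformly from the operational form \eqref{eqn:sol-rep-step}, using the norm equivalence \eqref{eqn:equiv-n}, Lemma \ref{lem:op-step}, and the Dirichlet-map bound \eqref{eqn:Dqb2}, and then invokes the Sobolev embedding $H^2\II\hookrightarrow L^\infty\II$ (valid since $d\le3$). This is a one-step consequence of tools already established, with constants manifestly independent of $q$, $\tau$ and $t_n$. Your route, by contrast, requires a discrete comparison principle for the inhomogeneous-Dirichlet CQ-BE scheme and a discrete maximal $L^p$-regularity estimate together with a $\tau$-uniform embedding of the discrete regularity class into $C([0,T]\times\overline\Omega)$; the last item in particular is the weakest link, since discrete maximal regularity yields $\ell^q$-in-time control and upgrading it to a sup-in-$n$ bound uniformly in $\tau$ is itself a nontrivial claim that the paper never develops. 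Your plan could presumably be carried out, but it imports substantially more machinery than is needed, and you correctly identified it as the delicate step --- the resolution is that the delicacy is avoidable altogether via the direct $H^2$ bound.
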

\begin{proof}
Using the solution representation \eqref{eqn:sol-rep-step} and triangle inequality we arrive at
\begin{align*}
\| u^n(q) \|_{H^2\II} & \le \| F_\tau(n;q)(v-D(q)b) + D(q)b +(I-F_\tau(n;q))A(q)^{-1} f  \|_{H^2\II} \\
&\le \| F_\tau(n;q)(v-D(q)b)\|_{H^2\II} + \| D(q)b \|_{H^2\II} + \|(I-F_\tau(n;q))A(q)^{-1} f  \|_{H^2\II}.
\end{align*}
We use the norm equivalence \eqref{eqn:equiv-n} and Lemma \ref{lem:op-step}  to obtain
\begin{align*}
  \| F_\tau(n;q)(v-D(q)b)\|_{H^2\II} &\le c\Big(  \|  F_\tau(n;q) A(q) (v-D(q)b)\|_{L^2\II} +   \| F_\tau(n;q)(v-D(q)b)\|_{L^2\II} \Big)\\
  &\le  c\Big(  \|  A(q) (v-D(q)b)\|_{L^2\II} +   \|  v-D(q)b \|_{L^2\II} \Big) \\
  &\le c  \|   v-D(q)b \|_{H^2\II}  \le c \Big(  \|   v \|_{H^2\II} +   \|  D(q)b \|_{L^2\II} \Big).
\end{align*}
Then the estimate \eqref{eqn:Dqb2} implies
$$\| F_\tau(n;q)(v-D(q)b)\|_{H^2\II} \le c (\| v \|_{H^2\II} + \| b \|_{H^\frac{3}{2}(\partial\Omega)}).$$
 This combined with Sobolev embedding theorem yields $ \| u^n(q) \|_{L^\infty\II} \le c$
  where the constant $c$ is independent of $\tau$, $t_n$ and $q$.

Next,
we let $w^n(q) = \bar\partial_\tau^\al u^n(q)$. By a simple computation, we obtain that $w^n(q) \in H_0^1\II$ and
\begin{equation}\label{eqn:bDalu-step}
\bar\partial_\tau^\alpha w^n(q) + A(q)w^n(q) = 0~~ \text{for all}~~ 1\le n\le N\quad \text{and}\quad w^0(q)= f+ \Delta v - qv.
\end{equation}
Then the solution representation \eqref{eqn:sol-rep-step} leads to
\begin{equation} \label{eqn:sol-bDalu-step}
w^n(q) = \bar\partial_\tau^\al u^n(q) = F_\tau(n;q)(f+\Delta v - qv).
\end{equation}
Applying Lemma \ref{lem:op-step} and the condition $q\in\Q$, we obtain
\begin{equation*}
\begin{aligned}
\| \bar\partial_\tau^\al  u^n(q) \|_{L^2\II} = \|F_\tau(n;q)(f+\Delta v - qv )\|_{L^2\II}
 \le c \min(1,t^{-\alpha}) \big(\|v\|_{H^2\II} + \|f\|_{L^2\II} \big).
\end{aligned}
\end{equation*}
Next, the norm equivalence \eqref{eqn:equiv-n}  and Lemma \ref{lem:op-step} yield
\begin{align*}
 \|  \bar\partial_\tau^\al u^n(q) \|_{H^2\II}
  \le c\big(\|\bar\partial_\tau^\al u^n(q)\|_{L^2\II} + \|A(q)\bar\partial_\tau^\al u^n(q)\|_{L^2\II} \big) \le c t_n^{-\al}(\|v\|_{H^2\II}+\|f\|_{L^2\II}).
\end{align*}
Here $c$ is independent of $\tau$, $t_n$ and $q$. The case that $s\in(0,1)$ follows immediately by interpolation.
This completes the proof of the lemma.
\end{proof}

Finally, we shall provide a useful \textsl{a priori} error estimate for $\bar \partial_\tau^\alpha u^n(q) - \partial_t^\alpha u(t;q)$.

\begin{lemma}\label{lem:un-err}
Let Assumption \ref{ass:cond-1} be valid and $q\in \Q$. Let $u^n(q)$ and $u(t;q)$ be the solutions to \eqref{eqn:step-0}
and \eqref{eqn:pde}, respectively. Then there holds
$$  \| \bar \partial_\tau^\alpha  u^n(q) -  \partial_t^\alpha u(t_n;q) \|_{L^2\II} \le c \tau t_n^{-\al-1} $$
with the constant independent of $q,\tau$ and $n$.
\end{lemma}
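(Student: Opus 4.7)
The plan is to express both $\bar\partial_\tau^\alpha u^n(q)$ and $\partial_t^\alpha u(t_n;q)$ through the corresponding solution operators $F_\tau(\cdot;q)$ and $F(\cdot;q)$ applied to a common $L^2$ datum, and then invoke Lemma \ref{lem:fully-approx} on the operator error. There is essentially no genuine obstacle; the real work was done in setting up the two representations and in the operator convergence estimate of Lemma \ref{lem:fully-approx}.

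First, I would invoke the identity \eqref{eqn:sol-bDalu-step} derived inside the proof of Lemma \ref{lem:bDalun}, which gives $\bar\partial_\tau^\alpha u^n(q) = F_\tau(n;q)\psi$ with $\psi := f + \Delta v - qv$. The analogous continuous identity \eqref{eqn:Dalu-4}, established in the proof of Lemma \ref{lem:Dalu}, provides $\partial_t^\alpha u(t_n;q) = F(t_n;q)\psi$. Subtracting these two representations reduces the quantity to be estimated to
$$\bar\partial_\tau^\alpha u^n(q) - \partial_t^\alpha u(t_n;q) = \bigl(F_\tau(n;q) - F(t_n;q)\bigr)\psi.$$
By Assumption \ref{ass:cond-1} and the fact that $\|q\|_{L^\infty\II}\le M_1$ for $q\in\Q$, the datum $\psi$ lies in $L^2\II$ with $\|\psi\|_{L^2\II}$ bounded uniformly in $q$.

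Next, I would apply the first estimate in Lemma \ref{lem:fully-approx} to conclude
$$\bigl\|\bigl(F_\tau(n;q) - F(t_n;q)\bigr)\psi\bigr\|_{L^2\II} \le c\,n^{-1}\min\bigl(1,\lambda^{-1}t_n^{-\alpha}\bigr)\|\psi\|_{L^2\II}.$$
The final step is purely arithmetic: since $n^{-1} = \tau/t_n$ and since $\min(1,\lambda^{-1}t_n^{-\alpha})\le c\,t_n^{-\alpha}$ (split into the regime $t_n\le 1$, in which $t_n^{-\alpha}\ge 1$, and the regime $t_n>1$, in which the second argument is the minimum), one obtains $c\,n^{-1}\min(1,\lambda^{-1}t_n^{-\alpha}) \le c\,\tau\, t_n^{-\alpha-1}$, yielding the desired bound with a constant independent of $q$, $\tau$ and $n$.
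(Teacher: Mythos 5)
Your proposal is correct and follows essentially the same route as the paper: both subtract the representations \eqref{eqn:Dalu-4} and \eqref{eqn:bDalu-step} to reduce the claim to bounding $(F_\tau(n;q)-F(t_n;q))(f+\Delta v - qv)$ in $L^2\II$ and then invoke Lemma \ref{lem:fully-approx}. Your explicit conversion of $n^{-1}\min(1,\lambda^{-1}t_n^{-\alpha})$ into $\tau t_n^{-\alpha-1}$ just spells out a step the paper leaves implicit.
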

\begin{proof}
Combining \eqref{eqn:Dalu-4} with \eqref{eqn:bDalu-step}, we obtain
\begin{equation*}
\bar\partial_\tau^\al u^n(q)- \Dal u(t_n;q) =  (F_\tau(n;q)-F(t_n;q))(\Delta v - q v +  f).
\end{equation*}
Then we apply Lemma \ref{lem:fully-approx} with $s=0$ and note that $q\in\Q$ to derive
\begin{equation*}
\|\bar\partial_\tau^\al u^n(q)- \Dal u(t_n;q)\|_{L^2\II}\le c \tau t_n^{-\al-1}\Big(\|v\|_{H^2\II} + \|f\|_{L^2\II}\Big).
\end{equation*}
This completes the proof of the lemma.
\end{proof}

 \subsection{Fully discrete scheme}
In this section, we shall discuss the completely discrete scheme to solve the inverse potential problem.
We use the convolution quadrature for the time discretization and use Galerkin
finite element method  for the space discretization. To begin with, we introduce some settings for the finite element methods.

To illustrate the main idea, we consider the square region
$\Omega = (a, b)^d \subset\R^d$, with $1\le d\le 3$ and the discussion could be extended to
general convex polyhedral domain.
For all $i=1,\dots, d$, we denote by $a=x_{0}<x_{1}<\dots<x_{M}=b$
a partition of the interval $[a, b]$ with a uniform mesh size $\displaystyle h=x_{i}-x_{i-1} = (b-a)/M$
for all $i=1,\dots,M$.
Then domain $\Omega$ is now separated into $M^d$ subrectangles by all grid points
$(x_{j_1 },  \ldots, x_{j_d})$,
with $0 \le j_i \le M$ and $i=1,\dots, d$. We denote this partition by $\T_h$,
and note that $\displaystyle h$ is the mesh size of the partition $\T_h$.

Then we apply the tensor-product Lagrange finite elements on the partition $\T_h$.
Let $Q_1$ be the space of polynomials in the variables $x_1, \ldots, x_d$, with real coefficients and
of degree at most one in each variable, i.e.,
\begin{equation*}
Q_1 = \Big\{   \sum_{0\le \beta_1,\beta_2,\ldots,\beta_d\le 1} c_{\beta_1 \beta_2\ldots \beta_d} x_1^{\beta_1} \cdots x_d^{\beta_d},
\quad \text{with}~~ c_{\beta_1 \beta_2\ldots \beta_d} \in\mathbb{R} \Big\}.
\end{equation*}
The $H^1$-conforming tensor-product finite element space, denoted by $X_h$, is defined as
\begin{equation}\label{eqn:Xh}
 X_h = \{ v\in H^1(\Omega): v|_K \in Q_1 \,\, \text{for all} \,\, K\in \T_h \}.
\end{equation}
Besides, we define
\begin{equation}\label{eqn:Xh0}
X_h^0 = X_h \cap H_0^1\II = \{ v\in H_0^1(\Omega): v|_K \in Q_1 \,\, \text{for all} \,\, K\in\T_h \}.
\end{equation}

We let $\mathcal{I}_h$ denote the Lagrange interpolation operator associated with the finite element space $X_h$. It satisfies
the following error estimates for $s=1,2$ and $1 \le p\le \infty$ with $sp>d$
\cite[Theorem 1.103]{ern-guermond}:
\begin{align}\label{eqn:int-err}
  \|v-\mathcal{I}_hv\|_{L^p(\Omega)}  +  h\|v-\mathcal{I}_hv\|_{W^{1,p}(\Omega)}  \leq ch^s\|v\|_{W^{s,p}(\Omega)}, \quad \forall v\in W^{s,p}(\Omega).
\end{align}
Similarly, we let $\Ih^\partial$ denote the Lagrange interpolation operator on the boundary.

We define the orthogonal $L_2$-projection $P_h:L^2(\Omega)\to X_h^0$ and
the Ritz projection $R_h(q):H^1_0(\Omega)\to X_h^0$ by
\begin{equation*}
  \begin{aligned}
    (P_h \psi,\chi_h)&=(\psi,\chi_h),&& \quad\forall \chi\in X_h^0,\\
    (\nabla R_h(q) \psi,\nabla\chi_h)&=(\nabla \psi,\nabla\chi_h) + (q\psi,\chi_h),&& \quad \forall \chi\in X_h^0.
  \end{aligned}
\end{equation*}
It is well-known that the operators $P_h$ and $R_h(q)$ (with $q\in \Q$) have the following
approximation property, cf. \cite[Lemma 1.1]{Thomee:2006} or
\cite[Theorems 3.16 and 3.18]{ern-guermond}, for $s\in[1,2]$,
\begin{equation}\label{ph-bound}
  \begin{aligned}
  \|P_h \psi-\psi\|_{L^2\II}+  \|R_h(q) \psi-\psi\|_{L^2\II}& \le c h^s\| \psi\|_{H^s(\Omega)}, \quad \forall \psi\in H^s(\Omega)\cap H_0^1(\Omega).\\
 \end{aligned}
\end{equation}
Noting that $q\in\Q$,  the constant $c$ is independent of $q$.

Let $\gamma_0$ be the trace
operator \cite[Section B.3.5]{ern-guermond}, and the set
$ X_{h}^\partial = \left\{\gamma_0(\chi_h): \  \chi_h \in X_h \right\}. $
 Now we introduce a discrete operator $D_h (q) : X_{h}^\partial \rightarrow X_h$ such that
$w_h = D_h {(q)} b_h$
for $b_h \in  X_{h}^\partial$ satisfies
\begin{equation*}
  (\nabla w_h , \nabla \chi_h) + (qw_h,\chi_h)= 0\quad \text{for all}~~ \chi_h \in X_h^0,\qquad \text{and} ~~\gamma_0(w_h) = b_h.
\end{equation*}
Then for any $q\in\Q$ and $b\in H^2(\partial\Omega)$, there holds the estimate \cite[Lemma 3.28]{ern-guermond}
\begin{equation}\label{eqn:err-Dh}
\| D(q)b - D_h(q) \Ih^\partial b  \|_{L^2\II} \le c h^2 \| b \|_{H^2(\partial\Omega)}.
\end{equation}


To discretize the problem \eqref{eqn:pde}, we consider the weak formulation  to
find $u(t) \in H^1(\Omega)$ such that for all $\fy \in H^1_0(\Omega)$ and $t>0$,
\begin{equation*}
(\Dal u(t), \fy) + (\nabla u(t),\nabla \fy) + (q u(t), \fy) =(f,\fy), ~~\text{with}~~ u(\cdot,t)=b~~\text{in}~  \partial\Omega ~~\text{and}~~u(0)=v .
\end{equation*}
Then the fully discrete scheme for \eqref{eqn:pde} reads: find $u_h^n(q)\in X_h$ for $t\ge0$
such that $\gamma_0(u_h^n(q)) = \Ih^\partial b$ on $\partial\Omega$ and for all $\fy_h \in X_h^0 $ and $n=1,2,\ldots,N$,
\begin{equation}\label{eqn:fully-i}
  ( \bDal^\alpha u_{h}^n(q),\fy_h)  +  (\nabla u_{h}^n(q), \nabla \fy_h) +(q u_{h}^n(q), \fy_h) = (f,\fy_h)\quad \text{with}~~u_h^0(q)=\mathcal{I}_h v.
\end{equation}

For $q\in \Q$ we define the discrete operator $A_h(q):\, X_h^0\to X_h^0$ such that
$$(A_h(q)\xi_h,\chi_h)  = (\nabla\xi_h,\nabla\chi_h) + (q \xi_h,\chi_h)  \quad  \text{for all} ~\  \xi_h,\chi_h \in X_h^0.$$
Then by splitting the fully discrete solution to \eqref{eqn:fully-i} as $u_h^n(q) = \fy_h^n(q) + D_h(q)\Ih^\partial b$,
we observe that $\fy_h^n(q)\in X_h^0$ satisfies
\begin{equation*}
  \bDal^\alpha \fy_h^n(q)   +  A_h(q)\fy_h^n(q)  = P_h f  \qquad \text{for}~ t >0,
\end{equation*}
with $\fy_h^0(q) = \mathcal{I}_h v - D_h(q)\Ih^\partial b$.
In particular, we define $\Delta_h=-A_h(0)$.
Then analogue to \eqref{eqn:sol-rep-step}, the fully discrete solution in
\eqref{eqn:fully-i} could be written in the operational form
\begin{equation} \label{eqn:sol-rep-fully}
\begin{aligned}
u_h^n(q) &=  F_\tau^h(n;q)\big(\mathcal{I}_h  v-D_h(q)\mathcal{I}_h^\partial  b\big) + D_h(q)\mathcal{I}_h^\partial b + \tau \sum_{j=1}^n E_\tau^h(j;q)  P_h f\\
&=  F_\tau^h(n;q)\big(\mathcal{I}_h  v-D_h(q)\mathcal{I}_h^\partial  b\big) + D_h(q)\mathcal{I}_h^\partial b + (I- F_\tau^h(n;q)) A_h(q)^{-1} P_h f,
\end{aligned}
\end{equation}
where the fully discrete operators $F_\tau^h(n;q)$ and $E_\tau^h(n;q)$ are defined as
\begin{equation}\label{eqn:op-fully}
\begin{aligned}
F_\tau^h(n;q) &= \frac{1}{2\pi\mathrm{i}}\int_{\Gamma_{\theta,\sigma}^\tau } e^{zt_n} {e^{-z\tau}} \delta_\tau(e^{-z\tau})^{\alpha-1}({ \delta_\tau(e^{-z\tau})^\alpha}+A_h(q))^{-1}\,\d z,\\
E_\tau^h(n;q) &= \frac{1}{2\pi\mathrm{i}}\int_{\Gamma_{\theta,\sigma}^\tau } e^{zt_n} e^{-z\tau}({ \delta_\tau(e^{-z\tau})^\alpha}+A_h(q))^{-1}\,\d z.
\end{aligned}
\end{equation}
Let $\lambda$ be the smallest eigenvalue of $-\Delta$ with the homogeneous Dirichlet boundary condition, and $\lambda_h(q)$ be the smallest eigenvalue of discrete operator $A_h(q)$. Recalling that the finite element space $X_h^0$ is conforming in $H_0^1\II$ and $q\in\Q$ ,
 the Courant minimax principle implies the relation that
$ 0<\lambda \le  \lambda_h(0) \le \lambda_h(q)$.
Then we have the resolvent estimate for the (discrete) elliptic operator $A_h(q)$:
with fixed $\phi\in(0,\pi)$
\begin{equation*}
\|(\delta_\tau(e^{-z\tau})^\al+A_h(q))^{-1}\| \le C\min(|z^{-\al}|,\lambda_h(q)^{-1}) \le C\min(|z^{-\al}|,\lambda^{-1}),   \quad \forall z \in \Sigma_{\phi},
\end{equation*}
for a constant $C$ independent of $q$ and $h$.
This immediately indicates the following result
for the fully discrete scheme \eqref{eqn:fully-i}, similar to Lemmas \ref{lem:op} and \ref{lem:op-step}.

\begin{lemma}\label{lem:op-fully}
Let $F_\tau^h(n;q)$ and $E_\tau^h(n;q)$ be the operators defined  in \eqref{eqn:op-fully}.
Let $\lambda$ be the smallest eigenvalue of $-\Delta$ with homogeneous boundary condition.
Then for any $q\in \mathcal Q$ and $v_h\in X_h^0$,  there holds for $n\ge 1$,
\begin{align*}
\|A_h(q) F_\tau^h(n;q)v_h\|_{L^2\II} + t_n^{1-\al} \|A_h(q) E_\tau^h(n;q)v_h\|_{L^2\II} &\le c  t_n^{-\al}\|v_h\|_{L^2\II},\\
\|F_\tau^h(n;q)v\|_{L^2\II}+ t_n^{1-\al} \|E_\tau^h(n;q)v_h||_{L^2\II}&\le c\min(1,\lambda^{-1} t_n^{-\al})\|v_h\|_{L^2\II}.
\end{align*}
Here $c$ is the generic constant independent of $\tau$, $t_n$ and $q$.
\end{lemma}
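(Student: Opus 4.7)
The lemma is the fully discrete counterpart of Lemma \ref{lem:op-step}, and since $F_\tau^h$ and $F$ live on different spaces we cannot simply transfer the bounds through an analogue of Lemma \ref{lem:fully-approx}. Instead my plan is to derive all four estimates directly from the contour representation \eqref{eqn:op-fully}, exactly as in the proofs of Lemmas \ref{lem:op} and \ref{lem:op-step} (compare \cite[Theorems 6.4 and 3.2]{Jin:2021book}). The essential inputs are already at hand: the comparability $|\delta_\tau(e^{-z\tau})|\asymp|z|$ together with $\delta_\tau(e^{-z\tau})\in\Sigma_{\theta'}$ on $\Gamma_{\theta,\sigma}^\tau$ from Lemma \ref{lem:delta}, and the uniform resolvent estimate
\[
\|(\delta_\tau(e^{-z\tau})^\alpha + A_h(q))^{-1}\| \le C\min(|z|^{-\alpha},\lambda^{-1}),
\]
recorded immediately before the statement.

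The template is the same for every bound. For $F_\tau^h(n;q)v_h$, combining Lemma \ref{lem:delta} with the resolvent estimate bounds the integrand in \eqref{eqn:op-fully} pointwise by $c|e^{zt_n}|\min(|z|^{-1},\lambda^{-1}|z|^{\alpha-1})\|v_h\|_{L^2\II}$. Choosing $\sigma = t_n^{-1}$ and parameterising $\Gamma_{\theta,\sigma}^\tau$ in the standard way yields $c\min(1,\lambda^{-1}t_n^{-\alpha})\|v_h\|_{L^2\II}$. For $E_\tau^h(n;q)v_h$ the same argument, now without the $\delta_\tau(e^{-z\tau})^{\alpha-1}$ prefactor, produces $c\min(t_n^{\alpha-1},\lambda^{-1}t_n^{-1})\|v_h\|_{L^2\II}$, which is exactly the required bound after multiplication by $t_n^{1-\alpha}$. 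For the two estimates involving $A_h(q)$, I would first substitute the algebraic identity
\[
A_h(q)(\delta_\tau(e^{-z\tau})^\alpha + A_h(q))^{-1} = I - \delta_\tau(e^{-z\tau})^\alpha(\delta_\tau(e^{-z\tau})^\alpha + A_h(q))^{-1}
\]
into \eqref{eqn:op-fully}; the two resulting integrals are then handled by the same contour estimate and, after collecting the $\delta_\tau^{\alpha-1}$ factor, deliver the $ct_n^{-\alpha}$ rate.

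There is no genuine obstacle here; the only point that must be tracked carefully is that the constants remain independent of $h$ and of $q\in\mathcal{Q}$. Uniformity in $h$ is secured by the Courant minimax principle $\lambda \le \lambda_h(0) \le \lambda_h(q)$ already invoked before the statement, which makes the $\min(|z|^{-\alpha},\lambda^{-1})$ factor in the resolvent bound entirely $h$-free; uniformity in $q$ follows from the same inequality together with $q\in\mathcal{Q}$. All downstream contour bounds inherit this uniformity, completing the argument.
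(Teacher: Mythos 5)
Your proposal is correct and takes essentially the route the paper intends: the paper gives no separate proof for this lemma, asserting only that it follows from the discrete resolvent estimate $\|(\delta_\tau(e^{-z\tau})^\alpha+A_h(q))^{-1}\|\le C\min(|z|^{-\alpha},\lambda^{-1})$ (uniform in $h$ and $q$ via the Courant minimax comparison) together with the same contour arguments used for Lemmas \ref{lem:op} and \ref{lem:op-step}, which is exactly what you carry out. Your observation that the $F_\tau^h$ bounds must be obtained directly from the contour representation rather than by comparison with a continuous-time operator (as was done for $F_\tau$ in Lemma \ref{lem:op-step} via Lemma \ref{lem:fully-approx}) is the right adjustment, and the identity $A_h(q)(\delta_\tau(e^{-z\tau})^\alpha+A_h(q))^{-1}=I-\delta_\tau(e^{-z\tau})^\alpha(\delta_\tau(e^{-z\tau})^\alpha+A_h(q))^{-1}$ with the choice $\sigma=t_n^{-1}$ is precisely the mechanism used in the paper's proof of the $E_\tau$ estimates there.
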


Next, we recall the following useful inverse inequality of finite element functions (see e.g., \cite[Corollary 1.141]{ern-guermond}).
\begin{lemma}\label{lem:inv-ineq}
Let $X_h$ and $X_h^0$ be the finite dimensional spaces defined in \eqref{eqn:Xh} and \eqref{eqn:Xh0} respectively.
Then we have the inverse estimates
\begin{equation*}
\begin{aligned}
 \| \psi_h  \|_{L^p\II} & \le C h^{d(\frac1p-\frac1q)}\| \psi_h  \|_{L^q\II} \quad \text{for all} ~~ 1\le q\le p \le \infty ~~\text{and}~~\psi_h \in X_h,\\
 \| \Delta_h \phi_h  \|_{L^2\II} &+ h^{-1}\| \nabla \phi_h  \|  \le C h^{-2}\| \phi_h  \|_{L^2\II} \quad \text{for all} ~~ \psi_h \in X_h^0.
\end{aligned}
\end{equation*}
\end{lemma}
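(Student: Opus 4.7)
My plan is to prove the two inverse estimates separately, each by a standard scaling argument on the reference element combined with quasi-uniformity of the tensor-product mesh $\T_h$.

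For the first inequality, I would exploit that on each element $K\in\T_h$ the restriction $\psi_h|_K$ lives in the finite-dimensional polynomial space $Q_1$. Pulling back to the reference cube $\hat K=[0,1]^d$ via the affine map $K\to\hat K$, every pair of $L^p$-norms on $Q_1$ is equivalent, so $\|\hat\psi\|_{L^p(\hat K)}\le c\,\|\hat\psi\|_{L^q(\hat K)}$ with a constant independent of $K$. Changing variables back introduces the Jacobian factor $h^d$: $\|\psi_h\|_{L^p(K)}=h^{d/p}\|\hat\psi\|_{L^p(\hat K)}$ and similarly for $q$, which yields the local inverse estimate $\|\psi_h\|_{L^p(K)}\le c\,h^{d(1/p-1/q)}\|\psi_h\|_{L^q(K)}$. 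I would then sum over the $\calO(h^{-d})$ elements. For finite $p$, use $\|\psi_h\|_{L^p(\Omega)}^p=\sum_K\|\psi_h\|_{L^p(K)}^p$ together with H\"older's inequality in the discrete form (or the power-mean inequality) to conclude the global bound; the case $p=\infty$ follows by taking the maximum over $K$ and similarly controlling $\|\psi_h\|_{L^q(K)}$ by the global $L^q$-norm.

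For the second inequality, the estimate $\|\nabla\phi_h\|_{L^2(\Omega)}\le c\,h^{-1}\|\phi_h\|_{L^2(\Omega)}$ follows by exactly the same scaling argument applied to $\nabla\psi_h$, using that on $Q_1$ the seminorm $|\cdot|_{H^1}$ is controlled by the $L^2$ norm on the reference cube. For the bound on the discrete Laplacian, recall that by its definition $\Delta_h\phi_h\in X_h^0$ satisfies $(\Delta_h\phi_h,\chi_h)=-(\nabla\phi_h,\nabla\chi_h)$ for all $\chi_h\in X_h^0$. Testing with $\chi_h=\Delta_h\phi_h$ gives
\begin{equation*}
\|\Delta_h\phi_h\|_{L^2(\Omega)}^2=-(\nabla\phi_h,\nabla\Delta_h\phi_h)\le \|\nabla\phi_h\|_{L^2(\Omega)}\,\|\nabla\Delta_h\phi_h\|_{L^2(\Omega)}.
\end{equation*}
Applying the inverse estimate $\|\nabla\Delta_h\phi_h\|_{L^2(\Omega)}\le c\,h^{-1}\|\Delta_h\phi_h\|_{L^2(\Omega)}$ (which is valid since $\Delta_h\phi_h\in X_h^0$) and dividing gives $\|\Delta_h\phi_h\|_{L^2(\Omega)}\le c\,h^{-1}\|\nabla\phi_h\|_{L^2(\Omega)}$, after which a second application of the gradient inverse estimate produces the $h^{-2}\|\phi_h\|_{L^2(\Omega)}$ bound and also closes the $h^{-1}\|\nabla\phi_h\|$ term on the left.

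I do not anticipate a serious obstacle here: the argument is entirely classical, and the only point that requires minor care is the global assembly in the $L^p$-$L^q$ estimate, where one must be attentive to the exponents so that the $h^d$ counting of elements recombines correctly with the local scaling factor to yield the stated global exponent $h^{d(1/p-1/q)}$. Because the assertion is standard (and in fact is only being quoted for later use), I would present the proof compactly or simply cite \cite[Corollary 1.141]{ern-guermond} together with the elementary duality step above for the $\Delta_h$ bound.
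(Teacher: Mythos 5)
Your argument is correct. Note that the paper does not actually prove this lemma: it is stated as a recalled fact with a pointer to \cite[Corollary 1.141]{ern-guermond}, so there is no in-paper proof to compare against; what you have written is essentially the standard proof of that cited result. Two small remarks. First, in the global assembly of the $L^p$--$L^q$ estimate the clean route is the monotonicity of discrete $\ell^r$ norms, $\bigl(\sum_K a_K^p\bigr)^{1/p}\le\bigl(\sum_K a_K^q\bigr)^{1/q}$ for $p\ge q$, applied to $a_K=\|\psi_h\|_{L^q(K)}$ after the local scaling; with this step no factor counting the $O(h^{-d})$ elements ever appears, so the caution you flag about "recombining the $h^d$ counting" is not actually needed (and H\"older by itself points the wrong way here). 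Second, the bound on $\|\Delta_h\phi_h\|_{L^2\II}$ is not literally contained in the cited corollary, since $\Delta_h=-A_h(0)$ is a discrete operator rather than a Sobolev seminorm; your duality step $\|\Delta_h\phi_h\|_{L^2\II}^2=-(\nabla\phi_h,\nabla\Delta_h\phi_h)$ followed by the gradient inverse estimate applied to $\Delta_h\phi_h\in X_h^0$ is exactly the right supplement, and it is a genuine (if routine) addition beyond the reference the paper quotes.
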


Next, we intend to derive an \textsl{a priori} estimate for  $\bar \partial_\tau^\alpha u_h^n(q) -  \bar \partial_\tau^\alpha u^n(q)$.

\begin{lemma}\label{lem:uhn-err}
Let Assumption \ref{ass:cond-1} be valid and $q\in \Q$. Let $u^n(q)$ and $u_h^n(q)$ be the solutions to \eqref{eqn:step-0}
and \eqref{eqn:fully-i}, respectively. Then there holds for any $\epsilon\in(0,1)$,
$$ \|  \bar \partial_\tau^\alpha (u_h^n(q) -  u^n(q)) \|_{L^2\II} \le c h^{2-\epsilon} \max(t_n^{-\alpha},t_n^{-(1-\epsilon)\alpha}). $$
Here the constants are independent of $q,\tau$ and $n$.
\end{lemma}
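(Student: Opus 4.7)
My plan is to represent both $\bar\partial_\tau^\alpha u^n(q)$ and $\bar\partial_\tau^\alpha u_h^n(q)$ via the solution operators $F_\tau$ and $F_\tau^h$ and split the error into an initial-data consistency piece and a pure space-discretization piece. From \eqref{eqn:sol-bDalu-step}, $\bar\partial_\tau^\alpha u^n(q) = F_\tau(n;q)\psi$ with $\psi := f+\Delta v - qv \in L^2(\Omega)$. Since $\bar\partial_\tau^\alpha$ annihilates time-independent functions, $\bar\partial_\tau^\alpha u_h^n = \bar\partial_\tau^\alpha \tilde u_h^n$ for $\tilde u_h^n := u_h^n - D_h(q)\mathcal{I}_h^\partial b \in X_h^0$, and the same argument gives
\begin{equation*}
\bar\partial_\tau^\alpha u_h^n(q) = F_\tau^h(n;q)\phi_h,\qquad \phi_h := P_h f - A_h(q)(\mathcal{I}_h v - D_h(q)\mathcal{I}_h^\partial b),
\end{equation*}
so that
\begin{equation*}
\bar\partial_\tau^\alpha(u_h^n - u^n) = F_\tau^h(n;q)(\phi_h - P_h\psi) + (F_\tau^h(n;q) P_h - F_\tau(n;q))\psi =: I_1 + I_2.
\end{equation*}

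For $I_1$ I would set $\tilde u^0 := v - D(q)b \in H^2 \cap H_0^1$ and exploit the identity $A_h(q)R_h(q) = P_h A(q)$ on $H^2 \cap H_0^1$, together with the pointwise relation $A(q)D(q)b = 0$, to rewrite $\phi_h - P_h\psi = A_h(q)[R_h(q)\tilde u^0 - \tilde u_h^0]$ with $\tilde u_h^0 := \mathcal{I}_h v - D_h(q)\mathcal{I}_h^\partial b$. Telescoping $R_h(q)\tilde u^0 - \tilde u_h^0$ through $\tilde u^0$ and $\mathcal{I}_h\tilde u^0$ and invoking \eqref{ph-bound}, \eqref{eqn:int-err}, \eqref{eqn:err-Dh} yields $\|R_h(q)\tilde u^0 - \tilde u_h^0\|_{L^2\II} \le ch^2$. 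The smoothing bound $\|A_h(q)F_\tau^h(n;q)g_h\|_{L^2\II} \le ct_n^{-\alpha}\|g_h\|_{L^2\II}$ from Lemma \ref{lem:op-fully} then gives $\|I_1\|_{L^2\II} \le ch^2 t_n^{-\alpha}$, which is well within the target.

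The core work is $I_2$. I would introduce $\chi^n := F_\tau(n;q)\psi$ (which lies in $H^2 \cap H_0^1$ for $n \ge 1$ by Lemma \ref{lem:op-step}) and the Ritz auxiliary $e_h^n := F_\tau^h(n;q)P_h\psi - R_h(q)\chi^n \in X_h^0$, so that $I_2 = e_h^n + (R_h(q) - I)\chi^n$. Interpolating the $s=0$ and $s=2$ bounds of Lemma \ref{lem:bDalun} yields $\|\chi^n\|_{H^{2-\epsilon}\II} \le c\min(t_n^{-(1-\epsilon/2)\alpha}, t_n^{-\alpha})$, whence the approximation property $\|(R_h(q)-I)\phi\|_{L^2\II} \le ch^{2-\epsilon}\|\phi\|_{H^{2-\epsilon}\II}$ controls the $(R_h(q)-I)\chi^n$ piece within the stated bound. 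A direct computation (again using $A_h(q)R_h(q) = P_h A(q)$) shows that $e_h^n$ satisfies the fully-discrete problem $\bar\partial_\tau^\alpha e_h^n + A_h(q) e_h^n = (P_h - R_h(q))\bar\partial_\tau^\alpha u^n$ with an $O(h^2)$ initial datum, whose source obeys the same $ch^{2-\epsilon}\min(t_n^{-(1-\epsilon/2)\alpha},t_n^{-\alpha})$ bound.

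The main obstacle is to close the estimate for $e_h^n$ through its Duhamel representation $e_h^n = F_\tau^h(n;q)e_h^0 + \tau\sum_{j=1}^n E_\tau^h(j;q)(P_h - R_h(q))\bar\partial_\tau^\alpha u^{n-j+1}$ while retaining the sharp asymptotic time decay. By Lemma \ref{lem:op-fully} the kernel $\|E_\tau^h(j;q)\|$ behaves like $\min(t_j^{\alpha-1}, t_j^{-1})$, and a naive bound on this discrete convolution produces a spurious $\log t_n$ factor. The $\epsilon$-loss on the spatial exponent is precisely what allows this logarithm to be absorbed into a fractional power, converting the $t_n^{-\alpha}\log t_n$ bound (valid for large $t_n$) into the $t_n^{-(1-\epsilon)\alpha}$ term; for $t_n \le 1$ the straightforward estimate already delivers $t_n^{-\alpha}$, and joining the two regimes produces the $\max(t_n^{-\alpha}, t_n^{-(1-\epsilon)\alpha})$ factor in the stated bound.
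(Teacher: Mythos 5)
Your representation of $\bar\partial_\tau^\alpha u^n(q)$ and $\bar\partial_\tau^\alpha u_h^n(q)$, and your term $I_1$ with the identity $\phi_h - P_h\psi = A_h(q)\big[R_h(q)(v-D(q)b) - (\mathcal{I}_hv - D_h(q)\mathcal{I}_h^\partial b)\big]$, reproduce the paper's treatment of the initial-data contribution and are correct. The gap is in $I_2$. With $\chi^n = F_\tau(n;q)\psi = \bar\partial_\tau^\alpha u^n(q)$ and $e_h^n = F_\tau^h(n;q)P_h\psi - R_h(q)\chi^n$, the error equation reads
\begin{equation*}
\bar\partial_\tau^\alpha e_h^n + A_h(q)e_h^n = (P_h - R_h(q))\,\bar\partial_\tau^\alpha\chi^n ,
\end{equation*}
because $A_h(q)R_h(q)\chi^n = P_hA(q)\chi^n = -P_h\bar\partial_\tau^\alpha\chi^n$. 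The source therefore involves $\bar\partial_\tau^\alpha\chi^n = -A(q)\chi^n$, one more discrete derivative than the $(P_h-R_h(q))\bar\partial_\tau^\alpha u^n=(P_h-R_h(q))\chi^n$ you wrote. Its correct bound is $ch^{2-\epsilon}\|A(q)\chi^n\|_{H^{2-\epsilon}\II}\sim ch^{2-\epsilon}t_n^{-(2-\epsilon/2)\alpha}$, not $ch^{2-\epsilon}t_n^{-(1-\epsilon/2)\alpha}$, and the Duhamel sum $\tau\sum_{j=1}^n\|E_\tau^h(j;q)\|\,t_{n+1-j}^{-(2-\epsilon/2)\alpha}$ then has a non-summable singularity at the upper endpoint whenever $(2-\epsilon/2)\alpha\ge1$: for $\alpha=1$ the $j=n$ term alone is of size $\tau^{-1+\epsilon/2}$, which blows up as $\tau\to0$. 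So the obstruction is not a ``spurious $\log t_n$ at large times'' that the $\epsilon$-loss absorbs, but a $\tau$-dependent blow-up caused by choosing the Ritz projection as intermediate (the classical nonsmooth-data obstruction for homogeneous problems with $L^2$ initial data). A secondary flaw: $e_h^0 = P_h\psi - R_h(q)\psi$ is not even defined, since $\psi = f+\Delta v - qv$ is only in $L^2\II$ and does not vanish on $\partial\Omega$, so the ``$O(h^2)$ initial datum'' claim has no content as stated.

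The paper's proof avoids both issues by taking $P_hw^n$ (not $R_h(q)\chi^n$) as the intermediate, so that the equation for $\theta_h^n = w_h^n - P_hw^n$ has source $A_h(q)(R_h(q)-P_h)w^n$ with the \emph{discrete} operator outside the projection difference. The key step your sketch is missing is then to split $A_h(q) = A_h(q)^{1-\epsilon/2}A_h(q)^{\epsilon/2}$, absorb $A_h(q)^{1-\epsilon/2}$ into $E_\tau^h(j;q)$ at the cost of the integrable kernel $t_j^{-1+\epsilon\alpha/2}$ (Lemma \ref{lem:op-fully}), and control $A_h(q)^{\epsilon/2}(R_h(q)-P_h)w^{n+1-j}$ by the inverse inequality at the cost of $h^{-\epsilon}$. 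That splitting is where both the $\epsilon$-loss in the spatial rate and the $t_n^{-(1-\epsilon)\alpha}$ factor in the statement actually originate, and without it (or a genuinely different device such as a Luskin--Rannacher type weighted argument) the estimate for your $e_h^n$ does not close.
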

\begin{proof}
First of all, we recall that $w^n(q) = \bar\partial_\tau^\al u^n(q) \in H_0^1\II$ and it satisfies \eqref{eqn:bDalu-step}.
Meanwhile, Assumption \eqref{assump:numerics} implies that the fully discrete approximation
$w_h^n(q) = \bar\partial_\tau^\al u_h^n(q)\in X_h^0$ satisfies
\begin{equation}\label{eqn:whn}
\bar\partial_\tau^\al w_h^n(q)+A_h(q)w_h^n(q) = 0 ,~~n\ge 1,~~\text{with}~~w_h^0(q) = P_h f - A_h(q)(\mathcal{I}_hv-D_h(q)\mathcal{I}_h^\partial b).
\end{equation}
To derive an estimate for $w_h^n(q) - w^n(q)$, we apply the splitting 
$$
w_h^n(q)-w^n(q) =  \big(w_h^n(q)- P_h w^n(q)\big)+ \big( P_h w^n(q)-w^n(q)\big)=: \theta_h^n+\rho^n.
$$
Then the bound of $\rho^n$ can be derived from  \eqref{eqn:int-err} and Lemma \ref{lem:bDalun} as
\begin{equation*}
\|\rho^n\|_{L^2\II}\le ch^2 \|\bar\partial_\tau^\al u^n(q)\|_{H^2\II}\le ch^2 t_n^{-\al}.
\end{equation*}
Next we turn to derive an estimate for $\theta_h^n \in X_h^0$, which satisfies
\begin{equation*}
\begin{aligned}
\bar\partial_\tau^\al \theta_h^n +A_h(q)\theta_h^n &= A_h(q)(R_h(q)-P_h)w^n(q) \quad \text{for all}~~ n=1,2,\ldots,N,\\
\theta_h^0 &= A_h(q)R_h(q)(v-D(q)b)-A_h(q)(\mathcal I_h v-D_h(q)\mathcal I_h^\partial b),
\end{aligned}
\end{equation*}
where we use the fact that $A_h(q)R_h(q) \psi = P_h A(q) \psi$ for $\psi\in H^2\II\cap H_0^1\II$.
By the representation \eqref{eqn:sol-rep-fully} we have
\begin{equation}\label{eqn:thetah-2}
\theta_h^n = F_\tau^h(n;q)\theta_h(0)+\tau \sum_{j=1}^{n} E_\tau^h(j;q)A_h(q)(R_h(q)-P_h)w^{n+1-j}(q) =:  I+II.
\end{equation}
From Assumption \ref{ass:cond-1}, we have
$v-D(q)b \in  H^2(\Omega)\cap H_0^1\II$. Then \eqref{eqn:int-err}, \eqref{ph-bound}, \eqref{eqn:err-Dh} and Lemma \ref{lem:op-fully} imply
\begin{equation*}
\begin{aligned}
\|I\|_{L^2\II}&\le ct_n^{-\alpha} \|R_h(q)(v-D(q)b) - (\Ih v-D_h(q)\Ih^\partial b) \|_{L^2\II} \\
&\le  ct_n^{-\alpha}\Big( \|(R_h(q)-I)(v-D(q)b) \|_{L^2\II} + \|v-\Ih v\|_{L^2\II} + \|D(q)b  -  D_h(q)\Ih^\partial b  \|_{L^2\II}\Big)\\
&\le c h^2 t_n^{-\alpha}\Big( \|v \|_{H^2\II} + \|b\|_{H^2(\partial\Omega)}\Big).
\end{aligned}
\end{equation*}
Now we turn to the estimate for the term $II$.
By Lemma \ref{lem:op-fully},
we have $$\|A_h(q)^s E_\tau^h(n;q) v_h\|_{L^2\II} \le c t_n^{(1-s)\alpha -1}\|v_h\|_{L^2\II}.$$
Meanwhile, the second inverse inequality in Lemma \ref{lem:inv-ineq} implies
$$ \|  A_h(q)^s v_h  \|_{L^2\II} \le c h^{-2s} \|v_h\|_{L^2\II}.$$
The fact $q\in\Q$ implies that the constant $c$ is independent of $q$.
Then we apply the above estimates combined with Lemma \ref{lem:bDalun} for $s=2-\epsilon$,
and obtain
\begin{equation*}
\begin{aligned}
\|II\|_{L^2\II}&\le  \tau \sum_{j=1}^{n} \|E_\tau^h(j;q)A_h(q)^{1-\epsilon/2}\| \, \|A_h(q)^{\epsilon/2}(R_h(q)-P_h)w^{n+1-j}(q)\|_{L^2\II}\\
&\le c\tau \sum_{j=1}^{n} t_{j}^{-1+\epsilon\alpha/2} \, \| (R_h(q)-P_h)w^{n+1-j}(q)\|_{L^2\II} h^{-\epsilon}\\
&\le ch^{2-\epsilon} \tau \sum_{j=1}^{n} t_{j}^{-1+\epsilon\alpha/2} \, \| w^{n+1-j}(q)\|_{H^{2-\epsilon}\II} \\
&\le ch^{2-\epsilon} \tau \sum_{j=1}^{n} t_{j}^{-1+\epsilon\alpha/2} t_{n+1-j}^{-\alpha+\epsilon\alpha/2} \le ch^{2-\epsilon} t_{n}^{-\alpha+\epsilon\alpha}.
\end{aligned}
\end{equation*}
This completes the proof of the lemma.
\end{proof}

%
%
%
%

The next result provides an \textsl{a priori} estimate for $  \bar \partial_\tau^\alpha  u_h^n(q_1) - \bar \partial_\tau^\alpha u_h^n(q_2) $,
which plays a key role in the stability analysis for the numerical solution of the inverse potential problem.
\begin{lemma}\label{lem:Dal-uhn}
Suppose that Assumption \ref{ass:cond-1} is valid and $q_1,q_2\in \Q$.
For $i=1,2$, let $u_h^n(q_i)$ be the solution to the fully discrete scheme \eqref{eqn:fully-i}, with potential $q_i$, respectively.
Then there holds for any positive parameter $\epsilon<\min(1,2-\frac{d}{2})$, 
\begin{equation*}
\|  \bar \partial_\tau^\alpha (u_h^n(q_1) - u_h^n(q_2)) \|_{L^2\II} \le c \max(t_n^{-\alpha},t_n^{-(1-\epsilon)\alpha})\| q_1 - q_2 \|_{L^2\II},
\end{equation*}
where the constant $c$ is independent of $h$, $\tau$, $q_1$, $q_2$ and $t_n$.
\end{lemma}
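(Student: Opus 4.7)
My plan is to mirror the proof of the continuous counterpart Lemma \ref{lem:stab-0} at the fully discrete level. Set $w_h^n(q):=\bar\partial_\tau^\alpha u_h^n(q)\in X_h^0$ and $\phi_h^n:=w_h^n(q_1)-w_h^n(q_2)$. Subtracting the discrete equations $\bar\partial_\tau^\alpha w_h^n(q_i)+A_h(q_i)w_h^n(q_i)=0$ from \eqref{eqn:whn} and using the identity $(A_h(q_1)-A_h(q_2))v_h=P_h((q_1-q_2)v_h)$ for $v_h\in X_h^0$ yields
\begin{equation*}
\bar\partial_\tau^\alpha\phi_h^n+A_h(q_1)\phi_h^n=P_h\bigl((q_2-q_1)w_h^n(q_2)\bigr),\qquad n\ge 1.
\end{equation*}
Computing the initial value from the expression of $w_h^0(q_i)$ in \eqref{eqn:whn} and exploiting the defining property of the discrete Dirichlet lifting $D_h(q_i)$, which implies $A_h(q_i)D_h(q_i)\mathcal{I}_h^\partial b=0$, gives $\phi_h^0=P_h((q_2-q_1)\mathcal{I}_h v)$.

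The discrete CQ Duhamel formula (which follows from operational calculus applied to the contour representation \eqref{eqn:op-fully}) then produces
\begin{equation*}
\phi_h^n=F_\tau^h(n;q_1)\phi_h^0+\tau\sum_{j=1}^n E_\tau^h(n-j+1;q_1)\,P_h\bigl((q_2-q_1)w_h^j(q_2)\bigr).
\end{equation*}
Lemma \ref{lem:op-fully} and the Sobolev embedding $H^2\II\hookrightarrow C(\overline\Omega)$ together with interpolation stability give $\|\mathcal{I}_h v\|_{L^\infty\II}\le c$, so the first term is bounded by $c\,t_n^{-\alpha}\|q_1-q_2\|_{L^2\II}$. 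For the convolution term, interpolating the two smoothing bounds $\|E_\tau^h(m;q_1)\|\le c\min(t_m^{\alpha-1},t_m^{-1})$ in Lemma \ref{lem:op-fully} produces $\|E_\tau^h(m;q_1)\|_{L^2\to L^2}\le c\,t_m^{-1+\epsilon\alpha/2}$, and the source is controlled via $\|P_h((q_2-q_1)w_h^j(q_2))\|_{L^2\II}\le \|w_h^j(q_2)\|_{L^\infty\II}\|q_1-q_2\|_{L^2\II}$.

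The main obstacle is to establish the weighted bound $\|w_h^j(q_2)\|_{L^\infty\II}\le c\,t_j^{-(1-\epsilon/2)\alpha}$ with a constant uniform in $h$ and $q_2\in\Q$. For this I would start from the operator identity $w_h^j(q)=F_\tau^h(j;q)f_h(q)$, derived by applying $\bar\partial_\tau^\alpha$ to \eqref{eqn:sol-rep-fully} and invoking $A_h(q)D_h(q)\mathcal{I}_h^\partial b=0$, with $f_h(q)=P_h f-A_h(q)\mathcal{I}_h v$. A Ritz-projection decomposition combined with \eqref{ph-bound}, \eqref{eqn:int-err}, \eqref{eqn:err-Dh} and the inverse estimate of Lemma \ref{lem:inv-ineq} yields $\|f_h(q)\|_{L^2\II}\le c$ uniformly; the essential ingredient is that $-\Delta v+qv\in L^2\II$ under Assumption \ref{ass:cond-1}. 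The interpolated smoothing estimate $\|A_h(q)^{1-\epsilon/2}F_\tau^h(j;q)\|_{L^2\to L^2}\le c\,t_j^{-(1-\epsilon/2)\alpha}$ from Lemma \ref{lem:op-fully}, coupled with the discrete norm equivalence $\|v_h\|_{H^{2-\epsilon}\II}\sim\|A_h(q)^{1-\epsilon/2}v_h\|_{L^2\II}$ on $X_h^0$ (with constants independent of $h$ and $q\in\Q$, obtained by interpolation between the $s=0$ and $s=2$ cases) and the Sobolev embedding $H^{2-\epsilon}\II\hookrightarrow L^\infty\II$ valid for $\epsilon<2-d/2$, delivers the claim.

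Substituting this bound into the convolution term produces $c\|q_1-q_2\|_{L^2\II}\cdot\tau\sum_{j=1}^n t_{n-j+1}^{-1+\epsilon\alpha/2}t_j^{-(1-\epsilon/2)\alpha}$, and a standard discrete Beta-integral estimate, entirely parallel to the computation in the continuous proof of Lemma \ref{lem:stab-0}, bounds this quasi-convolution by $c\,t_n^{-(1-\epsilon)\alpha}\|q_1-q_2\|_{L^2\II}$. Combining this with the $t_n^{-\alpha}$ initial-datum contribution gives the desired estimate $\|\phi_h^n\|_{L^2\II}\le c\max(t_n^{-\alpha},t_n^{-(1-\epsilon)\alpha})\|q_1-q_2\|_{L^2\II}$.
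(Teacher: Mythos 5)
Your overall architecture coincides with the paper's: the same perturbation equation for $\theta_h^n=\bar\partial_\tau^\alpha(u_h^n(q_1)-u_h^n(q_2))$ with source $P_h[(q_2-q_1)\bar\partial_\tau^\alpha u_h^n(q_2)]$ and initial value $P_h[(q_2-q_1)\mathcal{I}_hv]$, the same discrete Duhamel representation, and the same interpolated bound $\|E_\tau^h(m;q_1)\|\le c\,t_m^{-1+\epsilon\alpha/2}$. The gap is in how you justify the crucial weighted bound $\|\bar\partial_\tau^\alpha u_h^j(q_2)\|_{L^\infty\II}\le c\,t_j^{-(1-\epsilon/2)\alpha}$ uniformly in $h$. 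You route this through a ``discrete norm equivalence'' $\|v_h\|_{H^{2-\epsilon}\II}\sim\|A_h(q)^{1-\epsilon/2}v_h\|_{L^2\II}$ on $X_h^0$, obtained by interpolating between $s=0$ and $s=2$. The $s=2$ endpoint is false: piecewise bilinear finite element functions have jumps in their normal derivatives across element faces, so $X_h^0\not\subset H^2\II$ (indeed $X_h^0\not\subset H^s\II$ for any $s\ge 3/2$), and there is no $h$-uniform bound $\|v_h\|_{H^2\II}\le c\|A_h(q)v_h\|_{L^2\II}$. For the range $\epsilon\le 1/2$ even the membership $v_h\in H^{2-\epsilon}\II$ fails, so the Sobolev embedding you then invoke has nothing to act on. Since the lemma must hold for every positive $\epsilon<\min(1,2-\frac d2)$, in particular arbitrarily small $\epsilon$, this step cannot be repaired by shrinking the parameter range.

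The paper avoids exactly this obstruction by never putting a fractional Sobolev norm on a finite element function. It writes $\bar\partial_\tau^\alpha u_h^n(q_2)=\bar\partial_\tau^\alpha(u_h^n(q_2)-\mathcal{I}_hu^n(q_2))+\mathcal{I}_h\bar\partial_\tau^\alpha u^n(q_2)$, controls the first piece in $L^\infty$ by the inverse inequality $\|\chi_h\|_{L^\infty\II}\le ch^{-d/2}\|\chi_h\|_{L^2\II}$ together with the $O(h^{2-\epsilon}t_n^{-(1-\epsilon)\alpha})$ error estimate of Lemma \ref{lem:uhn-err} (and the interpolation error for $\mathcal{I}_h$), which is bounded precisely because $2-\frac d2-\epsilon>0$ --- this is where the restriction $\epsilon<2-\frac d2$ actually enters --- and bounds the second piece by the \emph{continuous-level} embedding $H^{2-\epsilon}\II\hookrightarrow L^\infty\II$ applied to $\bar\partial_\tau^\alpha u^n(q_2)$ via Lemma \ref{lem:bDalun}. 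To close your argument you would need to replace your discrete norm-equivalence step by this comparison with the spatially continuous time-stepping solution (or by an independently proven $h$-uniform discrete Sobolev inequality of the form $\|v_h\|_{L^\infty\II}\le c\|A_h(q)^{1-\epsilon/2}v_h\|_{L^2\II}$, which does not follow from the interpolation you describe). The remaining ingredients of your proposal (the identification of $\phi_h^0$, the uniform bound on $P_hf-A_h(q)(\mathcal{I}_hv-D_h(q)\mathcal{I}_h^\partial b)$, and the discrete Beta-sum estimate) are sound.
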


\begin{proof}
We let $\theta_h^n = \bar\partial_\tau^\alpha (u_h^n(q_1)-u_h^n(q_2))$. Note that $\theta_h^n\in X_h^0$ and it satisfies
\begin{equation*}
\bar\partial_\tau^\al \theta_h^n +A_h(q_1)\theta_h^n =P_h[(q_2-q_1)\bar\partial_\tau^\al u_h^n(q_2)]~~\text{with}~~ \theta_h^0 = P_h[(q_2-q_1)\mathcal I_hv].
\end{equation*}
Now we apply the stability of $L^2$-projection $P_h$ to obtain
\begin{equation}\label{eqn:theta0}
\begin{split}
\| \theta_h(0) \|_{L^2\II} &\le \| (q_2-q_1) \Ih v \|_{L^2\II}
 \le \| q_2-q_1\|_{L^2\II} \| \Ih v \|_{L^\infty\II}\\
& \le \| q_2-q_1\|_{L^2\II} \| v \|_{L^\infty\II}.
 \end{split}
 \end{equation}
 Meanwhile, using the stability of $P_h$ and the inverse inequality in Lemma \ref{lem:inv-ineq} we arrive at
\begin{equation*}
\begin{aligned}
&\quad \|P_h[(q_2-q_1)\bar\partial_\tau^\alpha u^n(q_2)]\|_{L^2\II}\\
&\le c\|q_2-q_1\|_{L^2\II} \|\bar\partial_\tau^\alpha u^n(q_2)\|_{L^\infty\II}\\
&\le c\|q_2-q_1\|_{L^2\II}\left(\|\bar\partial_\tau^\al(u_h^n(q_2)-\Ih u^n(q_2)\|_{L^\infty\II}+\|\Ih \bar\partial_\tau^\al u^n(q_2)\|_{L^\infty\II}\right)\\
&\le c\|q_2-q_1\|_{L^2\II}\left(h^{-\frac d2}\|\bar\partial_\tau^\al(u_h^n(q_2)-\Ih u^n(q_2)\|_{L^2\II}+\| \bar\partial_\tau^\al u^n(q_2)\|_{L^\infty\II}\right).
\end{aligned}
\end{equation*}
Then we apply the Sobolev embedding theorem to derive that for $\epsilon<\min(1,2-\frac{d}{2})$,
\begin{equation*}
\begin{aligned}
&\quad \|P_h[(q_2-q_1)\bar\partial_\tau^\alpha u^n(q_2)]\|_{L^2\II}\\
& \le c\|q_2-q_1\|_{L^2\II}\left(h^{-\frac d2}\|\bar\partial_\tau^\al(u_h^n(q_2)-\Ih u^n(q_2)\|_{L^2\II}+\| \bar\partial_\tau^\al u^n(q_2)\|_{H^{2-\epsilon}\II}\right).
\end{aligned}
\end{equation*}
This together with Lemma \ref{lem:bDalun} leads to
\begin{equation*}
 \|P_h[(q_2-q_1)\bar\partial_\tau^\alpha u^n(q_2)]\|_{L^2\II}
 \le c\|q_2-q_1\|_{L^2\II}\left(h^{-\frac d2}\|\bar\partial_\tau^\al(u_h^n(q_2)-\Ih u^n(q_2))\|_{L^2\II}+ t_n^{-(1-\epsilon/2)\alpha} \right).
\end{equation*}
Then using  Lemmas \ref{lem:bDalun} and \ref{lem:uhn-err}, we obtain for $\epsilon<\min(1,2-\frac{d}{2})$,
\begin{equation*}
\begin{aligned}
&\quad h^{-\frac d2}\|\bar\partial_\tau^\al(u_h^n(q_2)-\Ih u^n(q_2)\|_{L^2\II} \\
&\le h^{-\frac d2}\Big(\|\bar\partial_\tau^\al(u_h^n(q_2)- u^n(q_2)\|_{L^2\II} +
\|\bar\partial_\tau^\al(\Ih u^n(q_2) - u^n(q_2))\|_{L^2\II}\Big)\\
&\le c h^{2-\frac d2-\epsilon} \Big(t_n^{-(1-\epsilon/2)\alpha} + \|\bar\partial_\tau^\al  u^n(q_2) \|_{H^{2-\epsilon}\II} \Big)
\le c h^{2-\frac d2-\epsilon} t_n^{-(1-\epsilon/2)\alpha}.
\end{aligned}
\end{equation*}
As a result, we conclude that for $\epsilon<\min(1,2-\frac{d}{2})$,
\begin{equation}\label{eqn:theta1-1}
\|P_h[(q_2-q_1)\bar\partial_\tau^\alpha u^n(q_2)]\|_{L^2\II}\le ct_n^{-(1-\epsilon/2)\alpha}\|q_2-q_1\|_{L^2\II}.
\end{equation}

Now, using the  representation  \eqref{eqn:sol-rep-fully}, we derive
\begin{equation*}
\theta_h^n =  F_\tau^h(n;q_1)\theta_h^0+\tau\sum_{j=1}^n E_\tau^h(j;q_1) P_h[(q_2-q_1)\bar\partial_\tau^\alpha u_h^{n+1-j}(q_2)].
\end{equation*}
Then  Lemma \ref{lem:op-fully} indicates that for any $\epsilon<\min(1,2-\frac{d}{2})$,
\begin{equation*}
\begin{aligned}
\|\theta_h^n\|_{L^2\II}&\le \|F_\tau^h(n;q_1)\theta_h(0)\|_{L^2\II} +\tau\sum_{j=1}^n \|E_\tau^h(j;q_1) P_h[(q_2-q_1)\bar\partial_\tau^\alpha u_h^{n+1-j}(q_2)]\|_{L^2\II}\\
&\le ct_n^{-\al}\|\theta_h^0\|_{L^2\II} + \tau\sum_{j=1}^n t_j^{-1+\epsilon\alpha/2}\|P_h[(q_2-q_1)\bar\partial_\tau^\alpha u_h^{n+1-j}(q_2)]\|_{L^2\II}.
\end{aligned}
\end{equation*}
This combined with \eqref{eqn:theta0} and \eqref{eqn:theta1-1} leads to the desired result.
\end{proof}

\subsection{The inverse potential problem: numerical reconstruction and error estimate}
In this part, we shall design a robust completely discrete scheme for the recovery of the potential.
Throughout this section, we need the following assumption.
\begin{assumption}\label{assump:numerics}
We assume that the exact potential $q^\dag$ and observational data $g_\delta$ satisfy the following conditions:
\begin{itemize}
\item[(i)] $q^\dag\in \Q\cap W^{1,p}\II$ with some $p>\max(d,2)$ and $q^\dag|_{\partial\Omega}$ is \textsl{a priori} known;
\item[(ii)]  $g_\delta(x) \in C(\overline \Omega)$ is noisy and it satisfies
$\gamma_0(g_\delta) = \gamma_0(g) = b$ and $\|  g_\delta - g  \|_{C(\overline \Omega)} = \delta$.
\end{itemize}
\end{assumption}

Based on Assumption \ref{ass:cond-1} and Assumption \ref{assump:numerics} (i), we have $f,q^\dag\in W^{1,p}\II$ for some $p>\max(d,2)$. Moreover,  Lemma \ref{lem:u-reg} indicates that $ \Dal u(T,q^\dag), u(T;q^\dag)\in H^2\II \subset W^{1,p}\II \subset L^\infty\II$ with $p\in(\max(d,2),6)$. Therefore, we conclude that
for some $p \in(\max(d,2),6)$
\begin{equation}\label{eqn:Deltau}
    \Delta g(x) = \Delta u(T;q^\dag) =  -f+ \Dal u(T,q^\dag) + q^\dag u(T;q^\dag) \in W^{1,p}\II.
\end{equation}

Besides, Assumption \ref{assump:numerics} (i) and (ii) imply
$$\gamma_0(\Delta g) = \gamma_0(qg - f) =  \gamma_0(q) b - \gamma_0(f),$$
which is \textsl{a priori} known.
Note that $\Delta g_\delta$ might not be well-defined in $L^2\II$.
Therefore, we need a numerical approximation to the unknown function $\Delta g$.
Now we define a function $\psi_h \in X_h$ such that
\begin{equation}\label{eqn:psih}
\gamma_0(\psi_h)= \Ih^\partial( \gamma_0(q) b - \gamma_0(f) ) \quad \text{and}\quad (\psi_h,\phi_h)=-(\nabla \Ih g_\delta, \nabla \phi_h)~~\text{for all}~~ \phi_h \in X_h^0.
\end{equation}
Then we have $\psi_h \approx \Delta g$
provided that $h=O(\delta^\frac13)$. 
This is given by the following lemma.

\begin{lemma}\label{lem:reg-err}
Suppose that Assumptions  \ref{ass:cond-1} and  \ref{assump:numerics} are valid.
Let $\psi_h \in X_h$ be the function defined in \eqref{eqn:psih}.
Then there holds
\begin{equation*}
\|  \psi_h - \Delta g  \|_{L^2\II} \le c \Big(\frac{\delta}{h^2} + h\Big),
\end{equation*}
where the constant $c$ is independent of $h$ and $\delta$.
\end{lemma}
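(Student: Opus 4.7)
The plan is to decompose $\psi_h-\Delta g$ into a noise-controlled part and a discretization-controlled part by introducing an intermediate function $\tilde\psi_h\in X_h$ defined by the \emph{same} variational problem as $\psi_h$ but with the exact data $g$ in place of the noisy data $g_\delta$; that is, $\gamma_0(\tilde\psi_h)=\Ih^\partial(\gamma_0(q^\dag)b-\gamma_0(f))$ and $(\tilde\psi_h,\phi_h)=-(\nabla\Ih g,\nabla\phi_h)$ for all $\phi_h\in X_h^0$. Writing $\psi_h-\Delta g=(\psi_h-\tilde\psi_h)+(\tilde\psi_h-\Delta g)$, I notice that $\psi_h$ and $\tilde\psi_h$ share the same boundary trace, so $\psi_h-\tilde\psi_h\in X_h^0$ and satisfies
\begin{equation*}
(\psi_h-\tilde\psi_h,\phi_h)=-(\nabla\Ih(g_\delta-g),\nabla\phi_h)\qquad\forall\,\phi_h\in X_h^0.
\end{equation*}

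For the noise term, I would test with $\phi_h=\psi_h-\tilde\psi_h$, apply Cauchy--Schwarz, and invoke the inverse inequality of Lemma \ref{lem:inv-ineq} twice (on $\|\nabla\Ih(g_\delta-g)\|_{L^2\II}$ and on $\|\nabla(\psi_h-\tilde\psi_h)\|_{L^2\II}$). Combined with the $L^\infty$-stability of the Lagrange interpolant and $\|g_\delta-g\|_{C(\overline\Omega)}=\delta$ from Assumption \ref{assump:numerics}, this yields $\|\Ih(g_\delta-g)\|_{L^2\II}\le c\delta$ and therefore $\|\psi_h-\tilde\psi_h\|_{L^2\II}\le c\delta/h^2$.

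For the discretization term $\tilde\psi_h-\Delta g$, I would first observe that the direct equation at $t=T$ gives $\Delta g=-f+\Dal u(T;q^\dag)+q^\dag g\in H^1\II$, using $f,q^\dag\in W^{1,p}\II$, Lemma \ref{lem:u-reg}, and $g\in H^2\II$. Then I split $\tilde\psi_h-\Delta g=(\tilde\psi_h-\Ih\Delta g)+(\Ih\Delta g-\Delta g)$; the second piece is $O(h)$ in $L^2\II$ by the interpolation estimate \eqref{eqn:int-err}. Since $\gamma_0(\Ih\Delta g)=\Ih^\partial(q^\dag b-f)=\gamma_0(\tilde\psi_h)$, the first piece lies in $X_h^0$, and integration by parts on $(\Delta g,\phi_h)=-(\nabla g,\nabla\phi_h)$ (legal because $\phi_h|_{\partial\Omega}=0$) produces the identity
\begin{equation*}
(\tilde\psi_h-\Ih\Delta g,\phi_h)=(\nabla(g-\Ih g),\nabla\phi_h)-(\Ih\Delta g-\Delta g,\phi_h)\qquad\forall\,\phi_h\in X_h^0.
\end{equation*}
Testing with $\phi_h=\tilde\psi_h-\Ih\Delta g$ reduces the remaining analysis to a control of $|(\nabla(g-\Ih g),\nabla\phi_h)|$ linear in $\|\phi_h\|_{L^2\II}$.

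The hard part is precisely this last estimate. A naive Cauchy--Schwarz combined with the inverse inequality $\|\nabla\phi_h\|_{L^2\II}\le ch^{-1}\|\phi_h\|_{L^2\II}$ only gives an $O(1)$ bound, since $\|\nabla(g-\Ih g)\|_{L^2\II}$ is itself $O(h)$; this loses the factor of $h$ we need. To recover the optimal rate I would exploit the tensor-product structure: on each rectangular element $\Delta\phi_h\equiv 0$ for $Q_1$ elements, so element-wise integration by parts collapses the left-hand side into a sum of edge-jump terms, and a superconvergence-type estimate for bilinear/trilinear Lagrange interpolation on uniform rectangular meshes then yields $|(\nabla(g-\Ih g),\nabla\phi_h)|\le ch\|\phi_h\|_{L^2\II}$, provided $g$ has sufficient additional regularity (obtained from elliptic regularity applied to $-\Delta g=f-\Dal u(T;q^\dag)-q^\dag g\in H^1\II$ with Dirichlet data $b$). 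Summing the three contributions gives the claimed bound $\|\psi_h-\Delta g\|_{L^2\II}\le c(\delta/h^2+h)$.
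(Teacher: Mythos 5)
Your proposal is correct and follows essentially the same route as the paper: the same intermediate function $\tilde\psi_h$ built from the exact data, the same three-term splitting $\psi_h-\Delta g=(\psi_h-\tilde\psi_h)+(\tilde\psi_h-\Ih\Delta g)+(\Ih\Delta g-\Delta g)$, the inverse inequality giving the $\delta/h^2$ bound for the noise part, and the superconvergence estimate $|(\nabla(g-\Ih g),\nabla\phi_h)|\le ch^2\|g\|_{H^3\II}\|\phi_h\|_{H^1\II}$ combined with $\|\phi_h\|_{H^1\II}\le ch^{-1}\|\phi_h\|_{L^2\II}$ for the critical term. The only nitpick is that for the interpolation error $\|\Ih\Delta g-\Delta g\|_{L^2\II}$ you should state $\Delta g\in W^{1,p}\II$ with $p>\max(d,2)$ (which follows from exactly the facts you cite) rather than merely $\Delta g\in H^1\II$, since the Lagrange interpolation estimate \eqref{eqn:int-err} requires $sp>d$ and $H^1$ does not suffice in dimensions $d=2,3$.
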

\begin{proof}
To derive the estimate, we define the auxiliary function
$ \tilde \psi_h\in X_h$
such that
\begin{equation*}
\gamma_0( \tilde \psi_h)=\Ih^\partial( \gamma_0(q) b - \gamma_0(f) ) \quad \text{and}\quad
( \tilde \psi_h,\phi_h)=-(\nabla \Ih g, \nabla \phi_h)~~\text{for all}~~ \phi_h \in X_h^0.
\end{equation*}
Then we consider the split
\begin{align*}
 \psi_h - \Delta g
 =  (\psi_h-\tilde \psi_h ) +   (\tilde \psi_h  -   \Ih \Delta g )
 + (\Ih \Delta g - \Delta g ) .
\end{align*}
According to the definition of $\psi_h$ and $\tilde \psi_h$, we know $\psi_h-\tilde \psi_h \in X_h^0$.
Then the inverse inequality  in Lemma \ref{lem:inv-ineq} implies
\begin{align*}
 \| \psi_h-\tilde \psi_h \|_{L^2\II} &= \sup_{\phi_h \in X_h^0}\frac{(\psi_h-\tilde \psi_h, \phi_h)}{\| \phi_h \|_{L^2\II}}
 = \sup_{\phi_h \in X_h^0}\frac{(\nabla (\Ih g - \Ih g_\delta), \nabla \phi_h)}{\| \phi_h \|_{L^2\II}}\\
 &\le c h^{-2} \|  \Ih g - \Ih g_\delta   \|_{L^2\II} \le c \delta h^{-2}.
\end{align*}
Meanwhile, using the fact that $\Delta g \in W^{1,p}\II$ for some $p\in(\max(2,d),6)$ by \eqref{eqn:Deltau}, the approximation property of $\Ih$ in \eqref{eqn:int-err} implies
\begin{align*}
  \|\Ih \Delta g - \Delta g  \|_{L^2\II} \le   \|\Ih \Delta g - \Delta g  \|_{L^p\II} \le c h  \| \Delta g \|_{W^{1,p}\II}.
\end{align*}
Finally, according to the definition of $\tilde \psi_h$, we know $\tilde \psi_h  -   \Ih \Delta g \in X_h^0$, and hence
\begin{align*}
  \| \tilde \psi_h  -   \Ih \Delta g  \|_{L^2\II} &=
  \sup_{\phi_h \in X_h^0} \frac{(\tilde \psi_h  -   \Ih \Delta g, \phi_h)}{\|   \phi_h  \|_{L^2\II}}
  = \sup_{\phi_h \in X_h^0} \frac{(\tilde \psi_h  -    \Delta g, \phi_h) + ( \Delta g -   \Ih \Delta g, \phi_h)}{\|   \phi_h  \|_{L^2\II}} \\
 &=  \sup_{\phi_h \in X_h^0} \frac{(\nabla  (g-\Ih g) , \nabla \phi_h)}{\|   \phi_h  \|_{L^2\II}} + ch  \| \Delta g \|_{W^{1,p}\II} .
\end{align*}
Then the superconvergence \cite[Theorem 4.1]{LinLin:2006}
 \begin{align*}
  (\nabla  (g-\Ih g) , \nabla \phi_h) \le ch^2 \| g \|_{H^3\II} \| \phi_h \|_{H^1\II},
\end{align*}
together with the inverse inequality in Lemma \ref{lem:inv-ineq} leads to
\begin{align*}
  \| \tilde \psi_h  -   \Ih \Delta g  \|_{L^2\II}
 &\le   \sup_{\phi_h \in X_h^0} \frac{ c h^2   \| \phi_h \|_{H^1\II}}{\|   \phi_h  \|_{L^2\II}} + ch \le \sup_{\phi_h \in X_h^0} \frac{ ch   \| \phi_h \|_{L^2\II}}{\|   \phi_h  \|_{L^2\II}} + ch \le ch.
\end{align*}
This completes the proof of the lemma.
\end{proof}

Now we define the operator $K_{h,\tau}: \Q \rightarrow \Q$ such that
\begin{equation}\label{eqn:Kh}
 K_{h,\tau} q (x) :=  P_{[0,M_1]}\Big(\frac{f(x)-\bar \partial_\tau^\alpha u_h^N(x;q) + \psi_h(x)  }{g_\delta(x)} \Big),
\end{equation}
where the function $P_{[0,M_1]}:\mathbb{R} \rightarrow \mathbb{R}$ denotes a truncation function defined by
\begin{equation}\label{eqn:P0M1-fully}
P_{[0,M_1]} (a) :=   \max(\min( M_1, a ),0).
\end{equation}

The next lemma shows a contraction property of the operator $K_{h,\tau}$.

\begin{lemma}\label{lem:Dal-uhn-2}
Let $q_1,q_2 \in  \Q$. Then there holds for any positive $\epsilon < \min(1,2-\frac{d}{2}) $,
$$\| K_{h,\tau} q_1 - K_{h,\tau} q_2 \|_{L^2\II} \le c \max(T^{-\alpha},T^{-(1-\epsilon)\alpha}) \|  q_1 -  q_2 \|_{L^2\II} .$$
\end{lemma}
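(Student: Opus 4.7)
The plan is to reduce the lemma to a direct application of Lemma \ref{lem:Dal-uhn} with $t_n = T$, after accounting for the truncation and the division by $g_\delta$.

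First I would exploit the fact that the projection $P_{[0,M_1]}$ defined in \eqref{eqn:P0M1-fully} is $1$-Lipschitz on $\mathbb{R}$. Consequently, pointwise
\begin{equation*}
|K_{h,\tau} q_1(x) - K_{h,\tau} q_2(x)| \le \Big|\frac{\bar\partial_\tau^\alpha u_h^N(x;q_1) - \bar\partial_\tau^\alpha u_h^N(x;q_2)}{g_\delta(x)}\Big|,
\end{equation*}
since $f$, $\psi_h$, and $g_\delta$ are all independent of the argument $q$. Taking $L^2$ norms, I obtain
\begin{equation*}
\| K_{h,\tau} q_1 - K_{h,\tau} q_2 \|_{L^2\II} \le \|g_\delta^{-1}\|_{L^\infty\II} \, \| \bar\partial_\tau^\alpha u_h^N(q_1) - \bar\partial_\tau^\alpha u_h^N(q_2) \|_{L^2\II}.
\end{equation*}

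Next I would bound $\|g_\delta^{-1}\|_{L^\infty\II}$ by a constant independent of $h$, $\tau$, $q_1$, $q_2$ and $T$. By Assumption \ref{ass:cond-1} together with Lemma \ref{lem:u-reg}, the exact terminal data satisfies $g \ge M_2 > 0$, so under Assumption \ref{assump:numerics} the noisy datum obeys $g_\delta \ge M_2 - \delta \ge M_2/2$ once $\delta$ is sufficiently small. This gives $\|g_\delta^{-1}\|_{L^\infty\II} \le 2/M_2$.

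Finally, I would directly invoke Lemma \ref{lem:Dal-uhn} at $n = N$ (so $t_n = T$), which provides exactly the estimate
\begin{equation*}
\| \bar\partial_\tau^\alpha u_h^N(q_1) - \bar\partial_\tau^\alpha u_h^N(q_2) \|_{L^2\II} \le c\,\max(T^{-\alpha}, T^{-(1-\epsilon)\alpha}) \, \|q_1 - q_2\|_{L^2\II}
\end{equation*}
for any $\epsilon \in (0, \min(1, 2-\tfrac{d}{2}))$, with constant independent of $h$, $\tau$, $q_1$, $q_2$ and $t_n$. Combining the three ingredients yields the claim.

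The proof is essentially structural, so I do not expect any real obstacle; the only subtle point is ensuring the lower bound on $g_\delta$ is genuinely uniform, which is why Assumption \ref{assump:numerics} enforces $\|g_\delta - g\|_{C(\overline\Omega)} = \delta$ with $\delta$ small relative to $M_2$. All the heavy analytic work has already been carried out in Lemma \ref{lem:Dal-uhn} via the resolvent estimate \eqref{eqn:relso-step} and the smoothing properties from Lemma \ref{lem:op-fully}.
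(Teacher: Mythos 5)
Your proposal is correct and matches the paper's own proof essentially step for step: the $1$-Lipschitz property of $P_{[0,M_1]}$, the uniform lower bound $g_\delta \ge M_2 - \delta$ coming from Lemma \ref{lem:u-reg} and Assumption \ref{assump:numerics}, and the application of Lemma \ref{lem:Dal-uhn} at $n=N$. No issues.
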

\begin{proof}
By the definition \eqref{eqn:Kh} and the property that $|P_{[0,M_1]} (a)  - P_{[0,M_1]} (b) | \le |a-b|$, there holds
\begin{align*}
|(K_{h,\tau} q_1 - K_{h,\tau} q_2)(x)| \le \Big|\frac{  \bar \partial_\tau^\alpha (u_h^N(x;q_2) - u_h^N(x;q_1) ) }{g_\delta(x)}\Big|
\le \frac{|  \bar \partial_\tau^\alpha (u_h^N(x;q_2) - u_h^N(x;q_1) ) |} {M_2 - \delta},
\end{align*}
where the second inequality follows from the facts that $g(x)=u(x,T)\ge M_2$ (Lemma \ref{lem:u-reg})
and $\|  g - g_\delta \|_{C(\overline\Omega)}= \delta$. Then Lemma \ref{lem:Dal-uhn} yields for any positive   $\epsilon < \min(1,2-\frac{d}{2}) $,
\begin{align*}
\|K_{h,\tau} q_1 - K_{h,\tau} q_2\|_{L^2\II} &\le c \| \bar \partial_\tau^\alpha (u_h^N(q_2) - u_h^N(q_1) ) \|_{L^2\II}\\
&\le c\max(T^{-\alpha},T^{-(1-\epsilon)\alpha}) \| q_1 - q_2 \|_{L^2\II}.
\end{align*}
This completes the proof of the lemma.
\end{proof}

Now we are ready to present the main theorem of this section.

\begin{theorem}\label{thm:err-fully}
Suppose that  Assumptions  \ref{ass:cond-1} and  \ref{assump:numerics} are valid.
Let $K_{h,\tau}$ be the operator defined in \eqref{eqn:Kh}. Then with sufficiently large $T$,
for any $q_0 \in  \Q$, the iteration
\begin{align}\label{eqn:iter-fully}
 q_{n+1} = K_{h,\tau} q_n,\qquad \forall~~ n=0,1,\ldots,
\end{align}
linearly converges to a unique fixed point $q^* \in L^\infty\II$  of $K_{h,\tau}$  with $0\le q^*\le M_1$ s.t.
\begin{align*}
\|   q^* - q_{n+1}  \|_{L^2\II}
\le  cT^{{-(1-\epsilon)\alpha}}
\| q^* - q_{n}  \|_{L^2\II}\qquad \text{for}~~ n\ge 0.  \end{align*}
 Moreover, there holds
\begin{align*} \| q^* - q^\dag \|_{L^2\II} \le  c \Big(\frac{\delta}{h^2} + h + \tau\Big), \end{align*}
\end{theorem}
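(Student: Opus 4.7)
The plan has two parts. First I apply the Banach fixed point theorem on the closed convex subset $\widetilde{\mathcal{Q}} := \{\psi \in L^\infty\II : 0 \le \psi \le M_1\}$ of $L^2\II$. Because the truncation $P_{[0,M_1]}$ keeps $K_{h,\tau}$ inside $\widetilde{\mathcal{Q}}$, and Lemma \ref{lem:Dal-uhn-2} supplies a contraction estimate on $K_{h,\tau}$ with constant $c\max(T^{-\alpha}, T^{-(1-\epsilon)\alpha}) = cT^{-(1-\epsilon)\alpha}$ for $T \ge 1$, choosing $T$ large enough makes this constant at most some $c_0 < 1$. Banach's theorem then yields a unique fixed point $q^* \in \widetilde{\mathcal{Q}}$, and the linear convergence rate follows at once from $\|q^* - q_{n+1}\|_{L^2\II} = \|K_{h,\tau}q^* - K_{h,\tau}q_n\|_{L^2\II}$.

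For the error estimate, I compare the two fixed-point identities
\begin{equation*}
q^*(x) = P_{[0,M_1]}\Big(\frac{f - \bDal^\alpha u_h^N(q^*) + \psi_h}{g_\delta}\Big)(x), \qquad q^\dag(x) = \frac{f - \Dal u(T;q^\dag) + \Delta g}{g}(x),
\end{equation*}
where the truncation acts as the identity on the right since $q^\dag \in [0,M_1]$. Using $|P_{[0,M_1]}(a) - P_{[0,M_1]}(b)| \le |a - b|$, the algebraic identity $\frac{A_1}{B_1} - \frac{A_2}{B_2} = \frac{A_1 - A_2}{B_1} + \frac{A_2(B_2 - B_1)}{B_1 B_2}$, the positivity $g, g_\delta \ge M_2/2$ (valid for small $\delta$), and the bound $f - \Dal u(T;q^\dag) + \Delta g = q^\dag g \in L^\infty\II$, the task reduces to estimating the $L^2\II$ norms of (i) $\Dal u(T;q^\dag) - \bDal^\alpha u_h^N(q^*)$, (ii) $\psi_h - \Delta g$, and (iii) $g - g_\delta$.

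Piece (i) I would split by telescoping as
\begin{equation*}
[\Dal u(T;q^\dag) - \Dal u(T;q^*)] + [\Dal u(T;q^*) - \bDal^\alpha u^N(q^*)] + [\bDal^\alpha u^N(q^*) - \bDal^\alpha u_h^N(q^*)],
\end{equation*}
and estimate the three pieces by Lemmas \ref{lem:stab-0}, \ref{lem:un-err}, and \ref{lem:uhn-err}, obtaining $cT^{-(1-\epsilon)\alpha}\|q^\dag - q^*\|_{L^2\II}$, $c\tau T^{-\alpha-1}$, and $ch^{2-\epsilon}T^{-(1-\epsilon)\alpha}$, respectively. Lemma \ref{lem:reg-err} controls (ii) by $c(\delta/h^2 + h)$, while (iii) yields $c\delta \le c\delta/h^2$. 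Using $h^{2-\epsilon} \le h$ for $h \le 1$ and collecting,
\begin{equation*}
\|q^* - q^\dag\|_{L^2\II} \le c T^{-(1-\epsilon)\alpha}\|q^* - q^\dag\|_{L^2\II} + c\Big(\frac{\delta}{h^2} + h + \tau\Big),
\end{equation*}
and the hypothesis $cT^{-(1-\epsilon)\alpha} \le c_0 < 1$ absorbs the first term into the left-hand side.

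The main obstacle is structural rather than computational: the first summand in the splitting of (i) generates a self-referential $\|q^\dag - q^*\|_{L^2\II}$ contribution whose coefficient must be driven strictly below one, which is precisely why the theorem requires $T$ sufficiently large and is the discrete manifestation of the conditional stability in Theorem \ref{thm:cond-stab}. A secondary point worth noting is that the combination of the $h^{2-\epsilon}$ space-discretization rate from Lemma \ref{lem:uhn-err} with the $\delta/h^2$ rate in Lemma \ref{lem:reg-err} is exactly what caps the best achievable convergence at $O(\delta^{1/3})$ under the balanced parameter choice $h = \tau = O(\delta^{1/3})$.
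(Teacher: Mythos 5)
Your proposal is correct and follows essentially the same strategy as the paper: a contraction argument via Lemma \ref{lem:Dal-uhn-2} for existence, uniqueness and linear convergence of the iteration, followed by a comparison of the two fixed-point identities, a telescoping of the $\Dal u(T;q^\dag)-\bDal^\alpha u_h^N(q^*)$ term, Lemma \ref{lem:reg-err} for $\psi_h-\Delta g$, and absorption of the self-referential $cT^{-(1-\epsilon)\alpha}\|q^\dag-q^*\|_{L^2\II}$ contribution for $T$ large. The one place you diverge is the telescoping order: you swap the potential at the \emph{continuous} level first (invoking Lemma \ref{lem:stab-0}) and then discretize in time and space at $q^*$, whereas the paper keeps all discretization-error lemmas at $q^\dag$ and swaps the potential only at the fully discrete level via Lemma \ref{lem:Dal-uhn}. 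Your ordering is fine, but it silently requires that Lemmas \ref{lem:stab-0}, \ref{lem:un-err} and \ref{lem:uhn-err} be applicable with the potential $q^*$, i.e.\ that $q^*\in\Q$ (in particular continuity, not just $q^*\in L^\infty\II$ with $0\le q^*\le M_1$); this does hold because $K_{h,\tau}$ maps any admissible potential to a continuous function (finite element functions divided by $g_\delta\in C(\overline\Omega)$, $g_\delta\ge M_2-\delta>0$, then truncated), so the fixed-point identity $q^*=K_{h,\tau}q^*$ upgrades $q^*$ to $C(\overline\Omega)$ --- a one-line remark worth adding. The paper's ordering sidesteps this entirely, since the discrete stability Lemma \ref{lem:Dal-uhn} is the only lemma evaluated at $q^*$ and it tolerates $L^\infty$ potentials.
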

where $q^\dag$ is the exact potential  and the constant $c$ is independent of $\tau$, $h$ and $\delta$.
\begin{proof}
Choosing an arbitrary initial guess $q_0 \in \Q$, the contraction mapping theorem and Lemma \ref{lem:Dal-uhn-2} (with sufficiently large terminal time $T$) imply that
the iteration \eqref{eqn:iter-fully} generates a Cauchy sequence $\{q_n\}_{n=1}^\infty$ in $L^2\II$ sense. Therefore the sequence $\{ q_n \}$ converges to a fixed point of  $K_{h,\tau}$  as $n\rightarrow\infty$, denoted by
$q^*\in L^2\II$. Then the use of the box restriction $P_{[0,M_1]}$ indicates  $0 \le q^* \le M_1$.

Next, we show the error estimate between $q^*$ and $q^\dag$. Since $q^\dag \in \Q$,  it holds that
\begin{align*}
\| q^\dag - q^* \|_{L^2\II} &\le \Big\|\frac{f-  \partial_t^\alpha u(T;q^\dag) + \Delta g}{g}
-  \frac{f- \bar \partial_\tau^\alpha u_h^N(q^*) +\psi_h  }{g_\delta}\Big\|_{L^2\II}\\
&\le \Big\|\frac{f - \partial_t^\alpha u( T;q^\dag) + \Delta g}{g}  - \frac{f - \partial_t^\alpha u(T;q^\dag) + \Delta g}{g_\delta}\Big\|_{L^2\II}\\
&\quad + \Big\| \frac{f - \partial_t^\alpha u(T;q^\dag) + \Delta g}{g_\delta} - \frac{f- \bar \partial_\tau^\alpha u_h^N(q^*) +\psi_h  }{g_\delta}\Big\|_{L^2\II}
 =: I + II.
\end{align*}
Due to the fact that $f(x),\ \partial_t^\alpha u(x, t;q^\dag), \ \Delta g \in L^2\II$, it is straightforward to see that the first term satisfies $I \le c\delta$.
So it suffices to establish a bound for $II$. First, we observe that for any positive $\epsilon < \min(1,2-\frac{d}{2}) $,
\begin{align*}
&\quad \| \partial_t^\alpha u(T;q^\dag) - \bar \partial_\tau^\alpha u_h^N(q^*) \|_{L^2\II} \\
& \le \| \partial_t^\alpha u(T;q^\dag) - \bar \partial_\tau^\alpha u^N(q^\dag) \|_{L^2\II}
+ \| \bar \partial_\tau^\alpha u^N(q^\dag) -  \bar \partial_\tau^\alpha u_h^N(q^\dag)  \|_{L^2\II}
+  \|  \bar \partial_\tau^\alpha u_h^N(q^\dag)  - \bar \partial_\tau^\alpha u_h^N(q^*) \|_{L^2\II} \\
& \le c (h^2  + \tau T^{-1}) T^{{-(1-\epsilon)\alpha}} +c T^{{-(1-\epsilon)\alpha}} \|  q^\dag - q^*  \|_{L^2\II},
\end{align*}
where for the last inequality we apply Lemmas \ref{lem:un-err}, \ref{lem:uhn-err} and \ref{lem:Dal-uhn}. This combined with Lemma \ref{lem:Dal-uhn-2} implies that
with $T$ away from $0$ there holds
\begin{align*}
 II \le  c \Big(\frac{\delta}{h^2} + h + \tau \Big) + c T^{{-(1-\epsilon)\alpha}} \|  q^\dag - q^*\|_{L^2\II}.
\end{align*}
Then we arrive at
\begin{align*}
\| q^\dag - q^* \|_{L^2\II} &\le   c_1 \Big(\frac{\delta}{h^2} + h + \tau \Big) + c_2 T^{{-(1-\epsilon)\alpha}} \|  q^\dag - q^* \|_{L^2\II}.
\end{align*}
Therefore, there exists a constant $T_0$ sufficiently large such that $ c_2 T_0^{{-(1-\epsilon)\alpha}} \le c_0 $ with some constant $c_0 \in(0,1)$ and
for any $T\ge T_0$ there holds
\begin{align*}
\| q^\dag - q^* \|_{L^2\II} &\le   \frac{c_1}{1- c_0} \Big(\frac{\delta}{h^2} + h + \tau \Big) \le c\Big(\frac{\delta}{h^2} + h+ \tau\Big) .
\end{align*}
This completes the proof of the theorem.
\end{proof}

\begin{remark}
The  error estimate in Theorem \ref{thm:err-fully} provides useful guidelines to choose discretization parameters $h$ and $\tau$ according to the  \textsl{a priori} known noise level $\delta$. For example, the choice $\tau, h = O(\delta^{\frac13})$ leads to the best convergence rate $O(\delta^{\frac13})$. This is fully supported by our numerical results in Section \ref{sec:numerics}.
\end{remark}

\section{Numerical experiments}\label{sec:numerics}
In this section, we present some one- and two-dimensional numerical results to illustrate the analysis.
The noisy data $g_\delta$ is generated by
$$g_\delta(x_i) = u(x_i,T)+\delta\zeta(x_i),$$
where $\zeta$ follows the standard Gaussian distribution and the points $x_i$ are grid points of a fine partition of $\Omega$.
Then to compute the numerical reconstruction $q^*$, we follow the idea in Section \ref{sec:fully} and design the iterative algorithm \ref{alg}.
 All the computations are carried out on a personal desktop with MATLAB 2021.
\begin{algorithm}
\SetKwInOut{Input}{input}\SetKwInOut{Output}{output}

\caption{An iterative algorithm for finding fixed point $q^*$ from $g_\delta$ }\label{alg}
 \KwData{Order $\alpha$, terminal time $T$, source term $f$, initial condition $v$, boundary data $b$,
 noisy observation $g_\delta$, upper bound constant $M_1$, discretization parameter $h$ and $\tau$;}
\KwResult{Approximate potential $q^*$.}
Compute $\psi_h$ by \eqref{eqn:psih};
set   $q_0  = P_{[0,M_1]}\Big[ \dfrac{f+\psi_h}{g_\delta}\Big]$, $k=0$ and  $e^0 = 1$\;
\While{$e^k>\text{tol}=10^{-10}$}{Compute $u_h^n(q_k)$, the fully discrete solution to \eqref{eqn:fully-i} with potential $q_k$\;
Update the potential by
$$
q_{k+1}  =  K_{h,\tau}q_k  = P_{[0,M_1]}\Big[\frac{f-\bar\partial_\tau^\al u_h^N(q_k)+\psi_h}{g_\delta} \Big];$$\\
Compute  error
$$
e^{k+1} = \|q_{k+1} - q_k\|_{L^2\II};
$$\\
$k\leftarrow k+1$\;
}
$q^*\leftarrow q_k$\;
\Output{The approximated potential $q^*$.}
\end{algorithm}

\subsection{One-dimensional examples}
To begin with, we consider the diffusion model in one dimension with $\Omega = (0,10)$. We set the problem data as
\begin{equation}\label{eqn:ion}
b(0) = b(10) = 1,\quad  v = \frac{1}{50}x(10-x)+1\quad \text{and}\quad f = 10.
\end{equation}
These data satisfy Assumption \ref{ass:cond-1}. In our experiments,
we test the following three (exact) potentials:
\begin{itemize}
\item[(1)] Smooth potential: $q^\dag_1 = 3+ \cos(0.6\pi x)$;
\item[(2)] Piecewise smooth potential: $q^\dag_2 = 4-tri(x)$, where $tri(x)$ is a triangle wave with value between $[0,1]$ and period $2$;
\item[(3)] Nonsmooth potential: $q^\dag_3 = 4-\chi_{[2,4]\cup[6,8]}$, where $\chi_D(x)$ denotes the characteristic function.
\end{itemize}
Note that $q^\dag_1$ and  $q^\dag_2$ satisfy Assumption \eqref{assump:numerics} (i), while $q_3^\dag \in H^{\frac{1}{2}-\epsilon}\II$
for any $\epsilon\in(0,1/2)$.

As we discussed in Section \ref{sec:fully},
we use the standard piecewise linear FEM with uniform mesh size $h$
for the space discretization, and the backward Euler (convolution quadrature) method with
uniform step size $\tau$ for the time discretization.
Since the closed form of exact solution is unavailable, we compute the exact observational data $g(x)=u(T;q^\dag)\approx u_h^N(q^\dag)$
by the fully discrete scheme \eqref{eqn:fully-i} with the fine meshes, i.e. $h=10^{-2}$, $\tau = 10^{-3}$.

For the \text{a priori} known noise level $\delta$, we choose the discretization parameters $h,\tau = O(\delta ^{1/3})$, and examine the relative error
\begin{equation}\label{eqn:eq}
e_q = \|q^\dag - q^*\|_{L^2\II}/\|q^\dag\|_{L^2\II},
\end{equation}
where $q^\dag$ is the exact potential and $q^*$ is the numerical reconstruction by Algorithm \ref{alg}.
Theorem \ref{thm:err-fully} concludes that Algorithm \ref{alg} produces a sequence $\{q_k\}$
linearly converging to a fixed point $q^*$, and the error satisfies $e_q = O(\delta ^{1/3})$.
In Figure \ref{fig:1D:sol-err} (a), (b) and (c), we present the profiles of exact potentials and reconstructed potentials with noise level $\delta=0.001$ and terminal time $T=1$.
We observe that our reconstructions  agree with the exact potentials.
In Figure \ref{fig:1D:sol-err} (d), (e) and (f), we plot the relative error $e_q$ defined by \eqref{eqn:eq} versus $\delta$, with different $\alpha$.
Our numerical results show that for the
smooth potentials $q^\dag_1$ and $q^\dag_2$,
the convergence rate is $O(\delta ^{1/3})$ which fully supports our theoretical results.
However, if the potential is discontinuous, the convergence rate is clearly less than order $1/3$ (cf. Figure \ref{fig:1D:sol-err} (f)).
 This illustrates the necessity of the Assumption on the smoothness of exact potential.
 Meanwhile, the experiments indicate that the error is robust with respect to the order $\alpha$.

\begin{figure}[htbp]
\centering
\begin{subfigure}{.32\textwidth}
\centering
\includegraphics[scale=0.34]{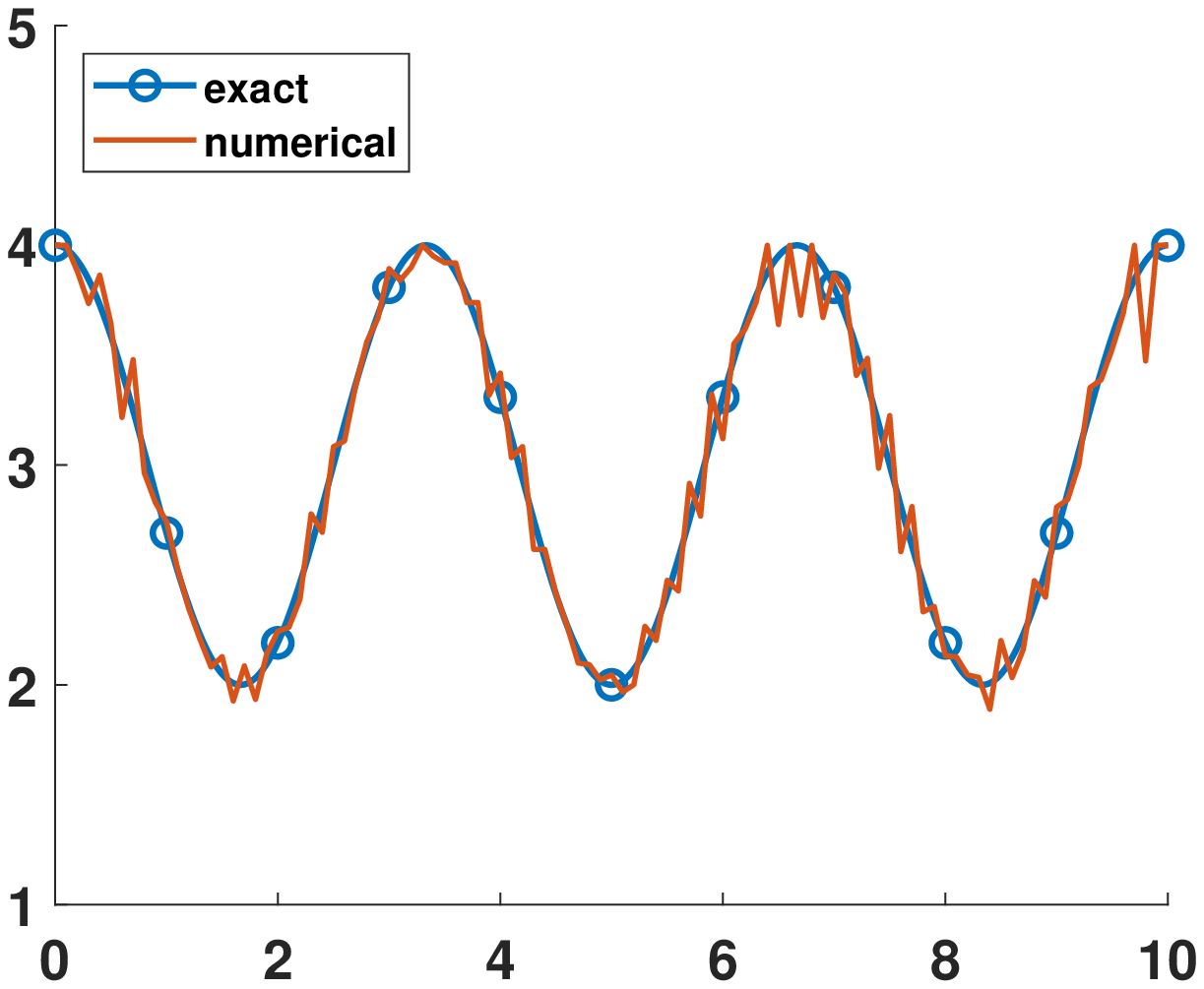}
\caption{Numerical reconstruction of $q^\dag_1$}
\end{subfigure}\hskip5pt
\begin{subfigure}{.32\textwidth}
\centering
\includegraphics[scale=0.34]{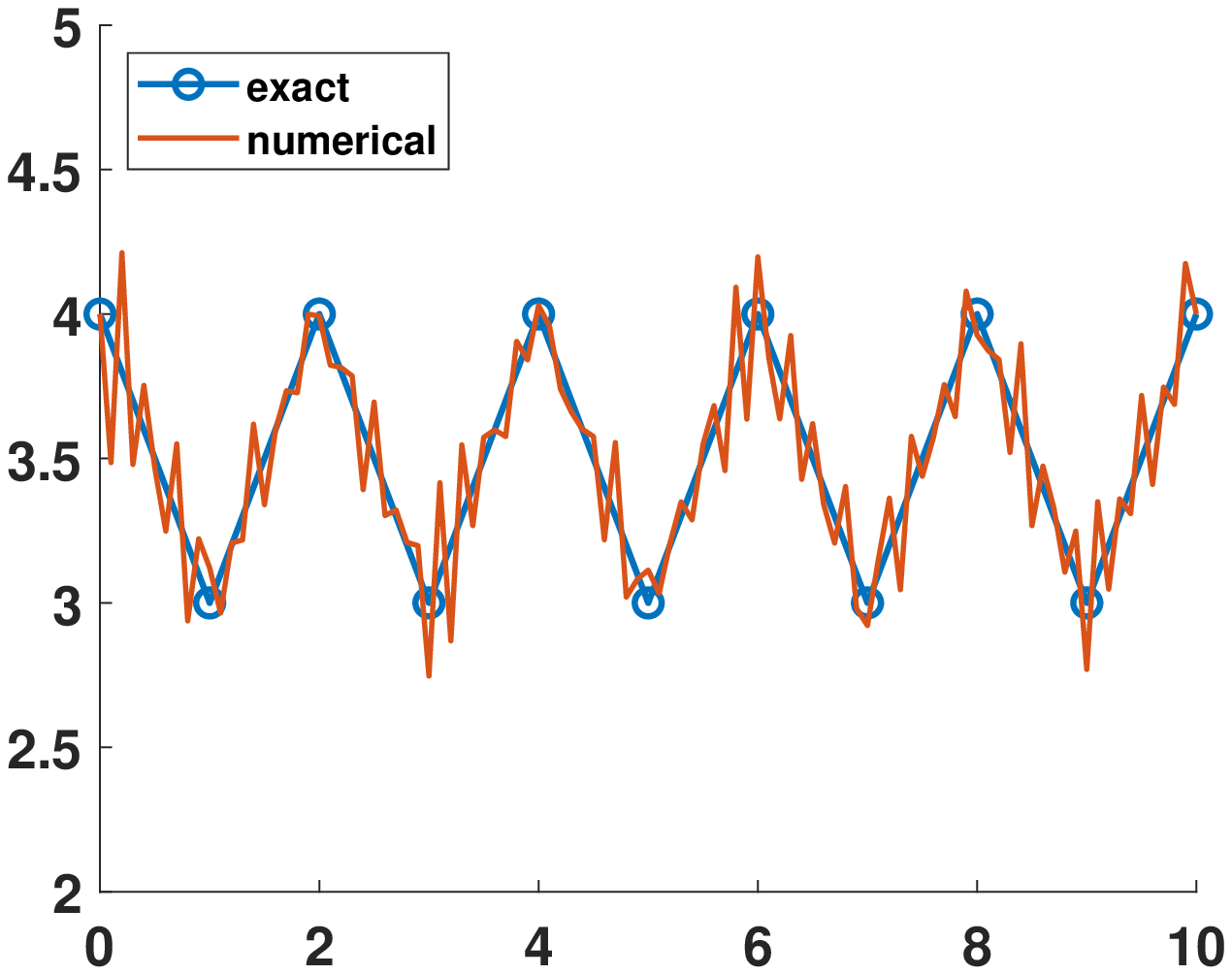}
\caption{Numerical reconstruction of $q^\dag_2$}
\end{subfigure}\hskip5pt
\begin{subfigure}{.32\textwidth}
\centering
\includegraphics[scale=0.34]{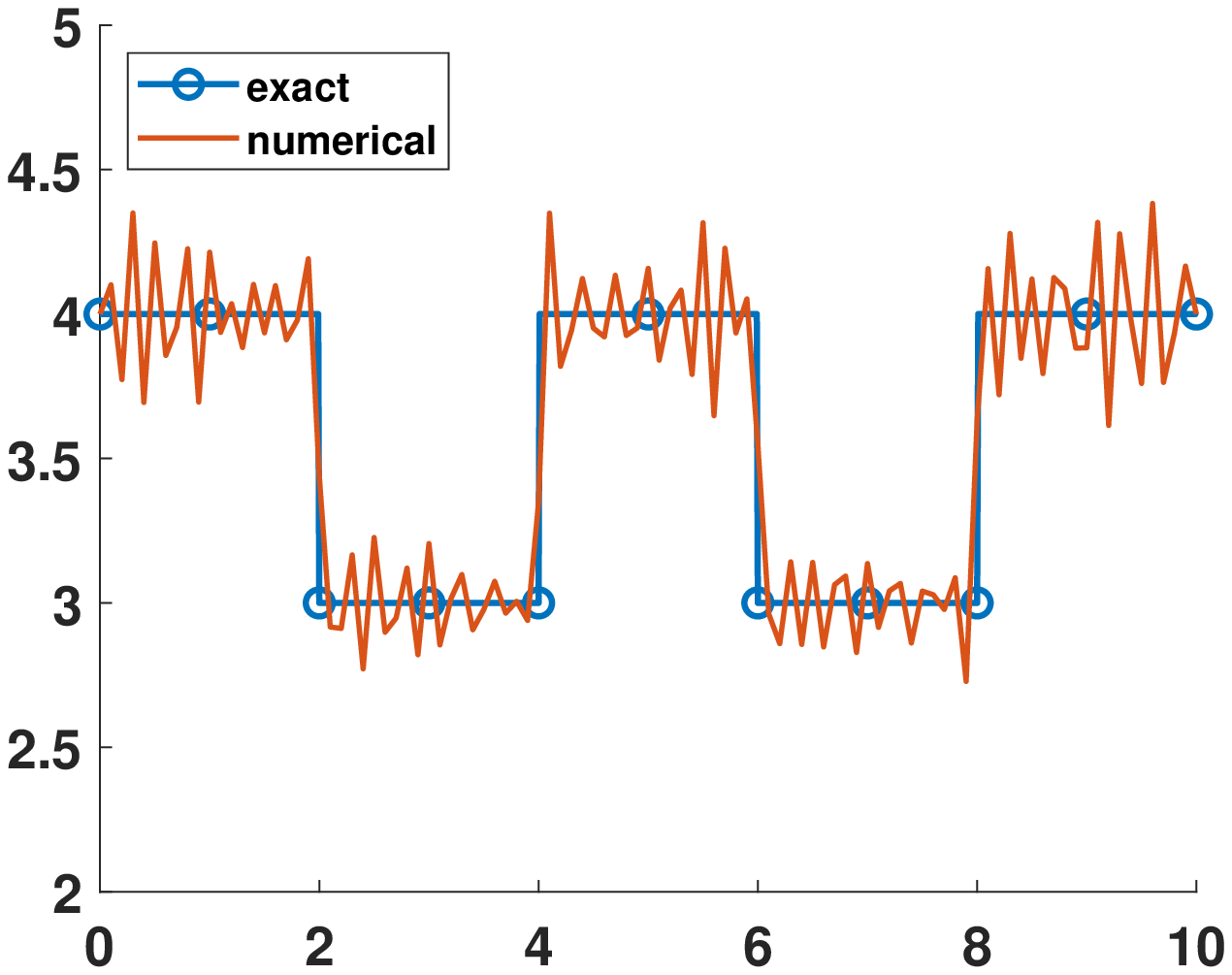}
\caption{Numerical reconstruction of $q^\dag_3$}
\end{subfigure}

\begin{subfigure}{.32\textwidth}
\centering
\includegraphics[scale=0.34]{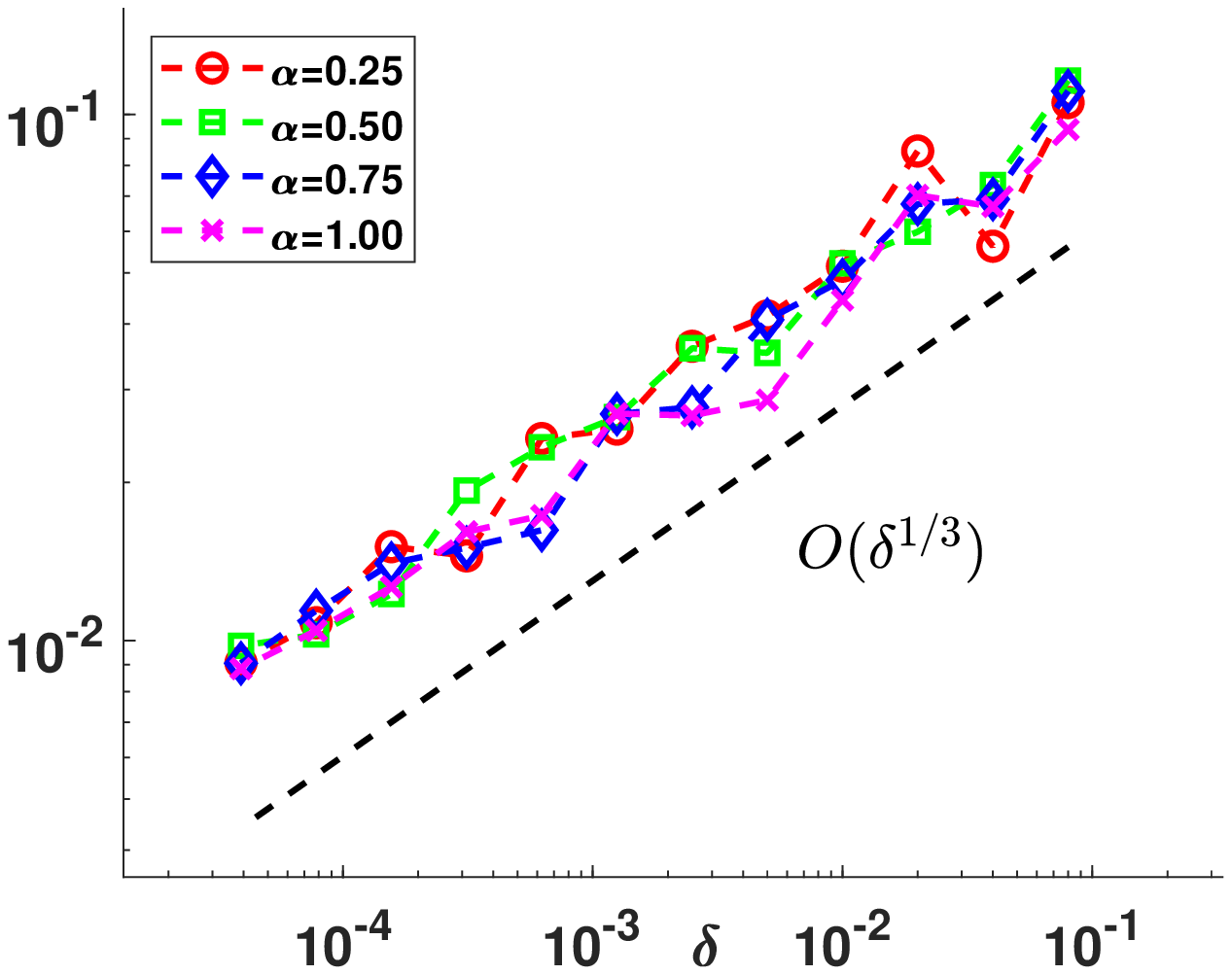}
\caption{Reconstruction error for $q^\dag_1$}
\end{subfigure}\hskip5pt
\begin{subfigure}{.32\textwidth}
\centering
\includegraphics[scale=0.34]{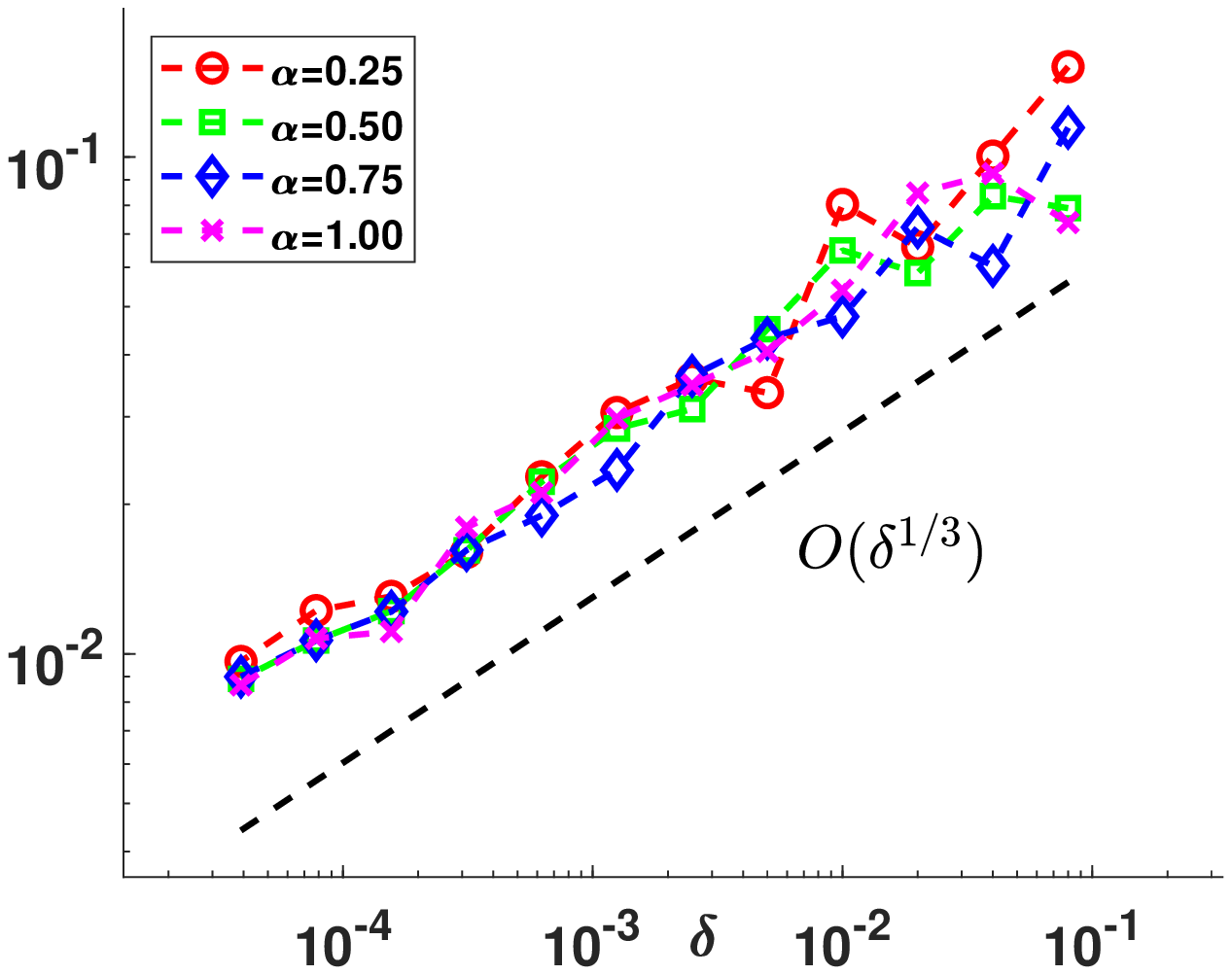}
\caption{Reconstruction error for  $q^\dag_2$}
\end{subfigure}\hskip5pt
\begin{subfigure}{.32\textwidth}
\centering
\includegraphics[scale=0.34]{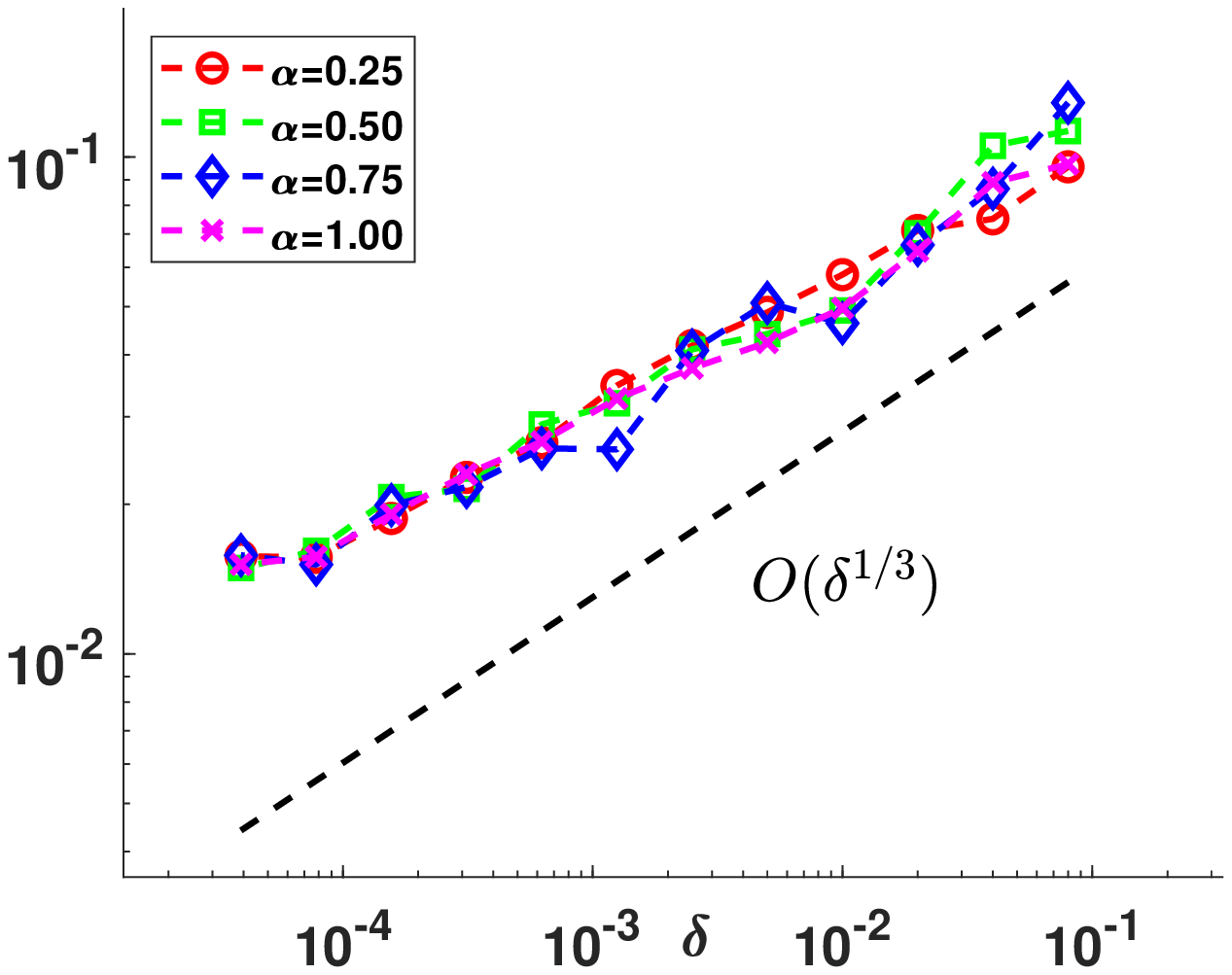}
\caption{Reconstruction error for  $q^\dag_3$}
\end{subfigure}%
\caption{(a), (b) and (c): Numerical reconstruction of potential with $\al = 0.5$, $\delta = 10^{-3}$, $h=0.1$, $\tau = 0.01$.
(d), (e) and (f): relative error $e_q$  versus noise level  $\delta$ with $h= \delta^{1/3}$, $\tau = \delta^{1/3}/10$ and $\al = 0.25,\,0.5,\,0.75,\,1$.}
\label{fig:1D:sol-err}
\end{figure}

Next, we test the convergence of the iteration in Algorithm \ref{alg}, with different $\alpha$ and $T$.
In the experiments, we use the problem data \eqref{eqn:ion} and the exact potential $q^\dag = q^\dag_2$.
Meanwhile, we fix $\delta = 10^{-6}$, $h = 10^{-2}$, $\tau = T/100$ and $q_0=4+x(1-x)/5$.
We let $q_k$ be the numerical solution obtained by $k$-th  iteration in Algorithm \ref{alg}, and compute the error at each iteration:
$$  e_k = \|q_k - q^\dag\|_{L^2\II}\qquad \text{for all}~~k\ge 0.$$
In Figure \ref{fig:1D:err-ite} (a) and (b), we report the convergence histories for $T=0.1$ and $T=2$ with different $\alpha$.
We clearly observe that the iteration converges linearly, and the convergence factor decreases as $T$ becomes larger.
Besides, the convergence appears to be robust to different fractional orders $\alpha$.
The errors stop at $8\times10^{-3}\approx \delta^{1/3}$, which agrees well with our theory.
Moreover, in Figure \ref{fig:1D:err-ite} (c), we test the convergence behavior for both large $T$ and small $T$.
Our experiments show that for small $T$, e.g. $T=10^{-2}$ and $10^{-3}$, the iteration does not converge to a reasonable
approximation to the exact potential.
In  Figure  \ref{fig:counter_eg1} (a) and (b), we plot the numerical reconstructions for $T=10^{-4}$ and $T=1$ respectively,
where we set $\delta=10^{-3}$, $h=0.1$ and $\tau = T/100$.
The numerical results show that Algorithm \ref{alg} produces an excellent reconstruction for $T=1$,
while the numerical reconstruction is inaccurate when $T$ is small.
This observation shows the necessity of the assumption in Theorems \ref{thm:cond-stab} and \ref{thm:err-fully}
that the terminal time $T$ should be sufficiently large.

\begin{figure}[htbp]
\centering
\begin{subfigure}{.33\textwidth}
\centering
\includegraphics[scale=0.33]{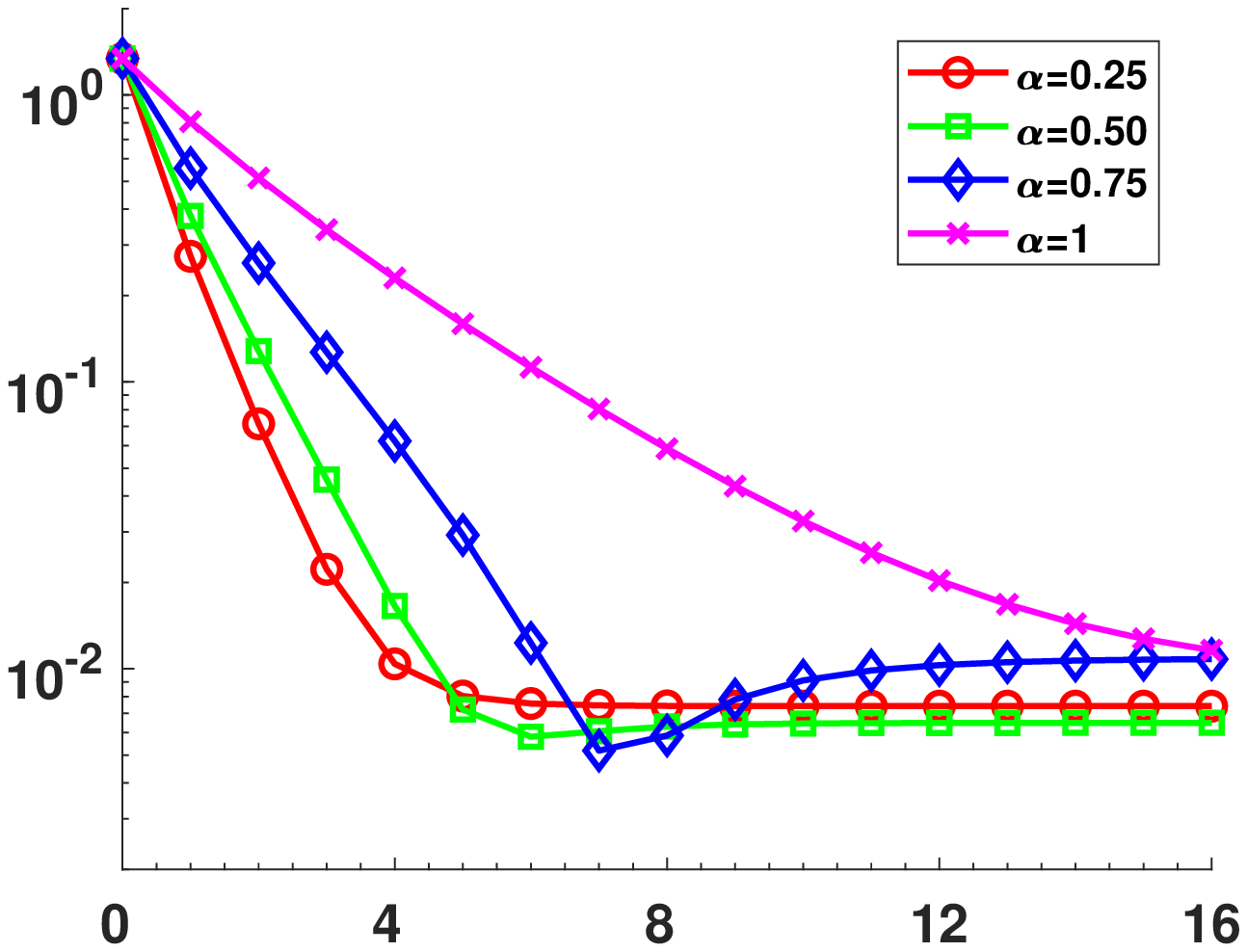}
\caption{$T=0.1$}
\end{subfigure}%
\begin{subfigure}{.33\textwidth}
\centering
\includegraphics[scale=0.33]{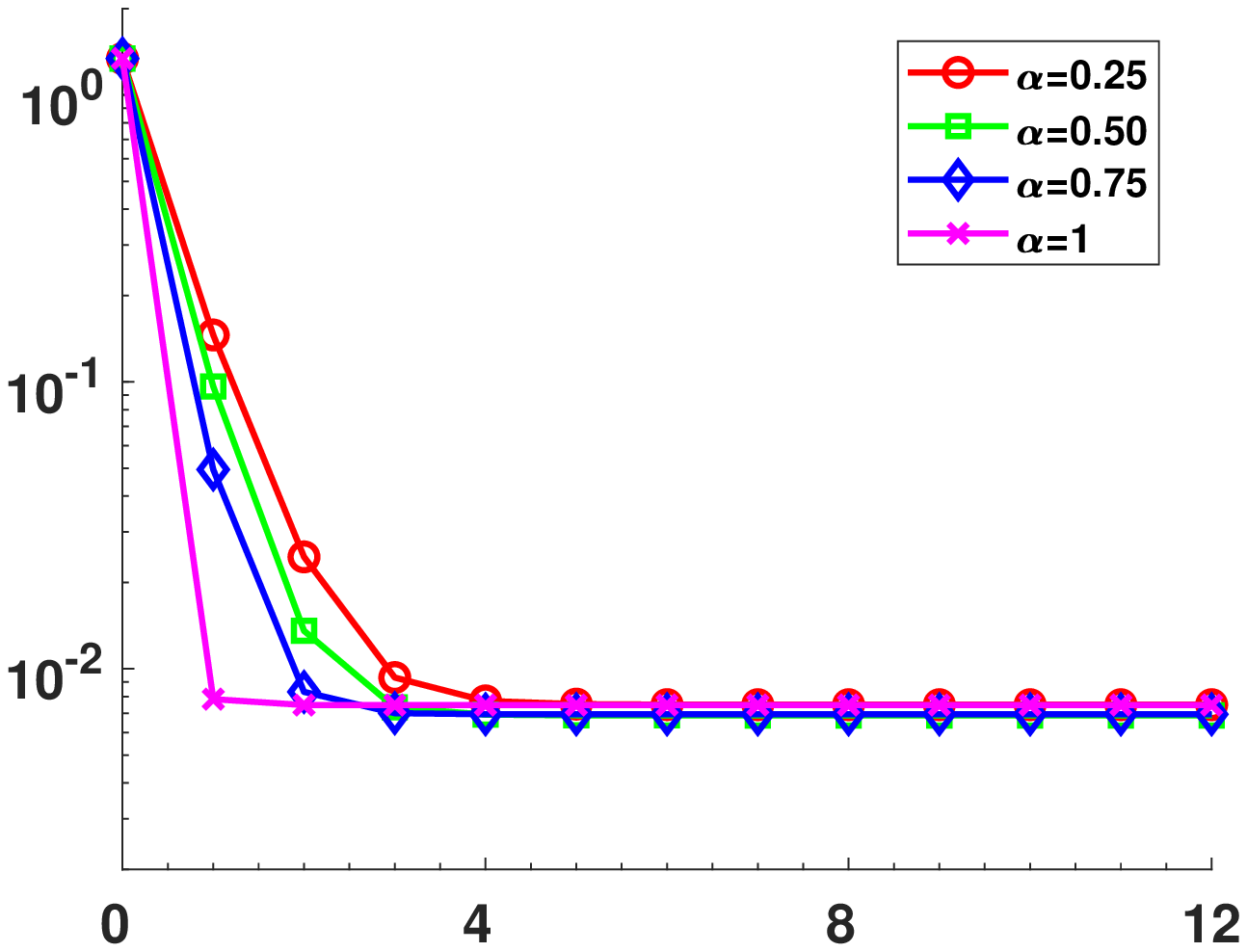}
\caption{$T=2$}
\end{subfigure}%
\begin{subfigure}{.33\textwidth}
\centering
\includegraphics[scale=0.33]{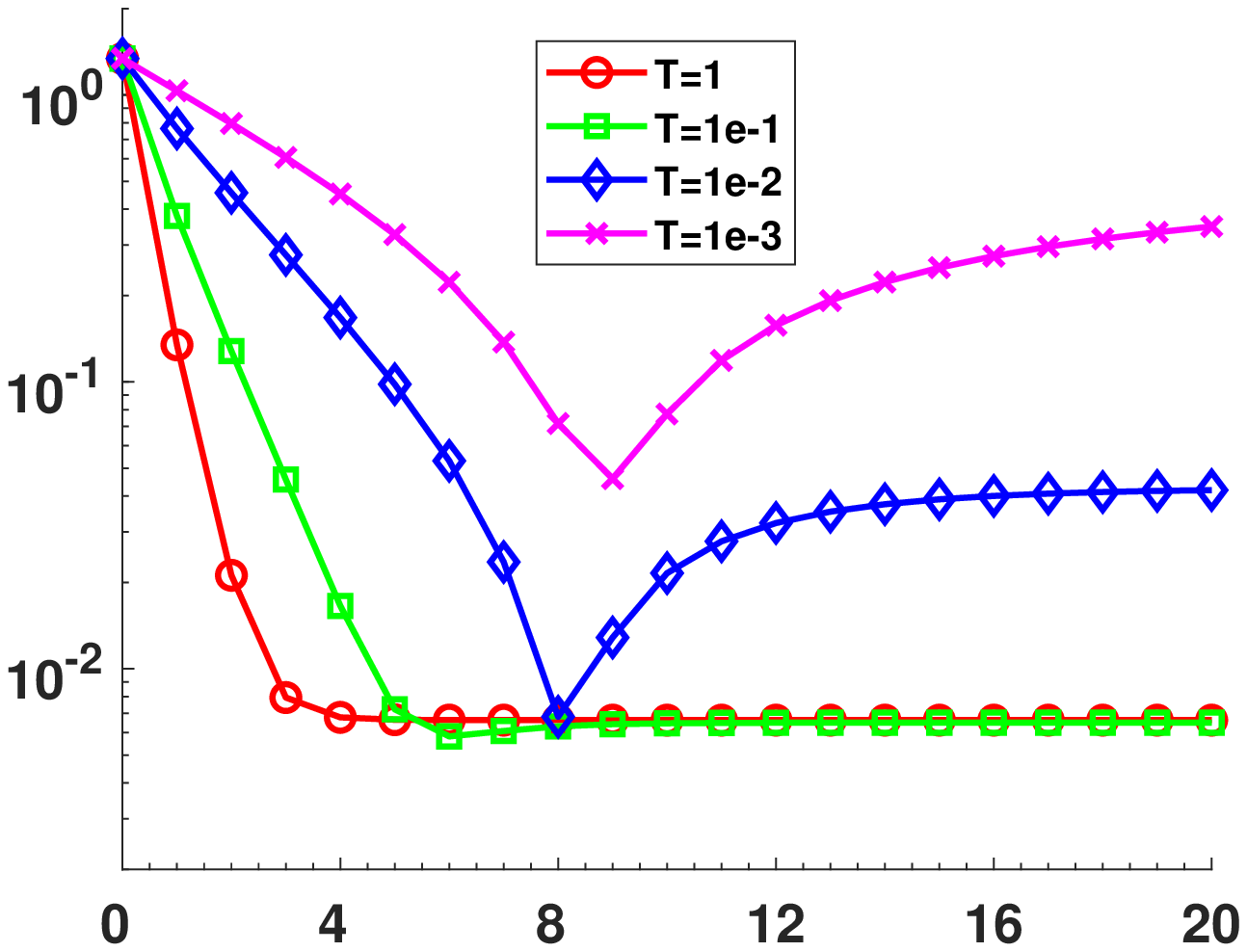}
\caption{$\alpha =0.5$}
\end{subfigure}%
\caption{Convergence histories of Algorithm \ref{alg} with different $T$ and $\alpha$.}\label{fig:1D:err-ite}
\end{figure}

\begin{figure}[htbp]
\centering
\begin{subfigure}{.45\textwidth}
\centering
\includegraphics[scale=0.45]{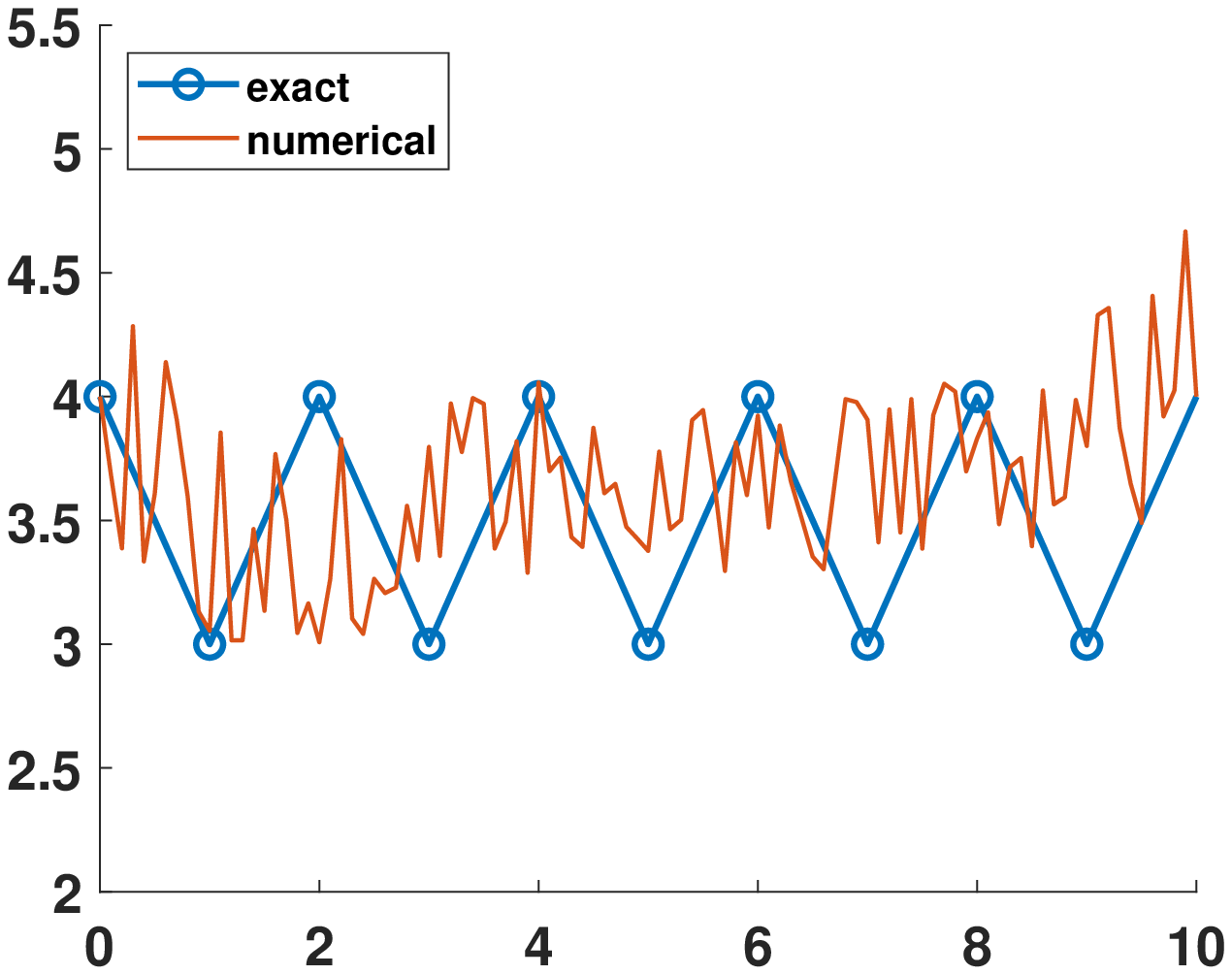}
\caption{$T= 10^{-4}$}
\end{subfigure}%
\begin{subfigure}{.45\textwidth}
\centering
\includegraphics[scale=0.45]{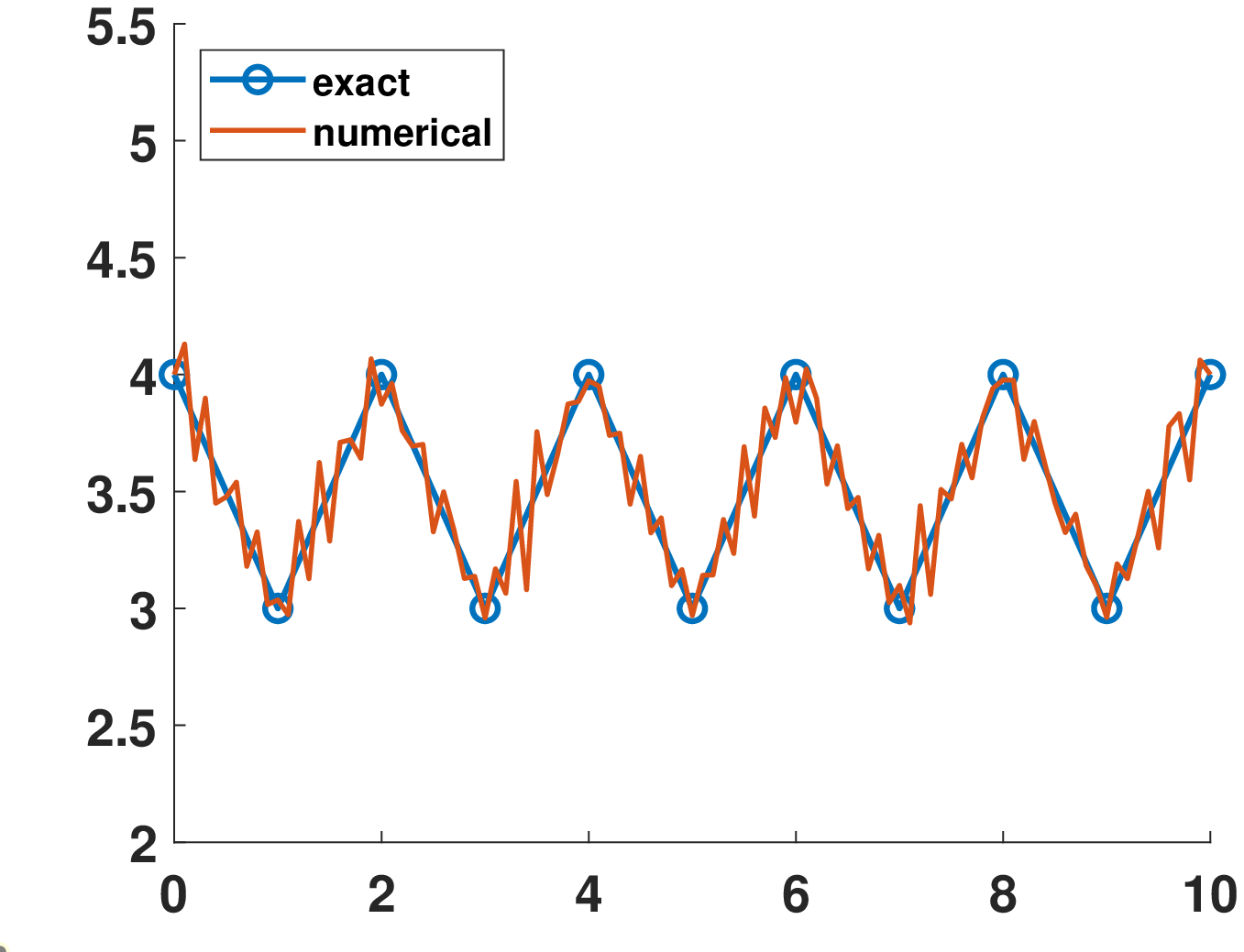}
\caption{$T=1$}
\end{subfigure}%
\caption{Plot of numerical reconstruction $q^*$. Left: $T= 10^{-4}$, $29819$ iterations and $\| q^{29819} - q^{29818} \|_{L^2\II} \le 10^{-10}$.
Right: $T=1$, $11$ iterations and $\| q^{11} - q^{10} \|_{L^2\II} \le 10^{-10}$. }
\label{fig:counter_eg1}
\end{figure}

\subsection{Examples in 2D}
Next, we present numerical experiments for a two-dimensional problem with the domain $(x,y)\in\Omega=(0,3)^2$
and the problem  data
\begin{equation*}
f(x,y) = 10,  ~~ b(x,y) =\frac{x(3-x)}{4}+1,~~ v(x,y) = x(3-x)\Big(\frac 14+\frac{y(3-y)}{10}\Big)+1.
\end{equation*}
Note that those data satisfy Assumption \ref{ass:cond-1}.
In the example, we test a smooth potential function
$$
q^\dag(x,y) = 3- \cos(\pi x)\cos(\pi y),\quad \text(x,y)\in\Omega,
$$
and set $M_1 = 5$.
The  exact observational data $g(x) = u(x,T)\approx U_h^N(x)$ is computed by the fully discrete scheme \eqref{eqn:CQ-BE}
with the spatial mesh size $h=0.01$ and time step size $\tau = T/10^4$.
In Figure \ref{fig:2D:err}, we report the reconstruction error \eqref{eqn:eq}  versus  noise level $\delta$,
where we set $h=\delta^{\frac13}$ and $\tau=\delta^{\frac{1}{3}}\times T/10$.
For $T=1$ and $T=5$, we clearly observe the convergence with rate $O(\delta^\frac13)$, cf. Figure \ref{fig:2D:err} (a) and (b).
Moreover, in case that $T$ is very small, our numerical results show that Algorithm \ref{alg} does not provide a good reconstruction $q^*$ for $\alpha = 0.50,\,0.75,\,1.0$, due to the loss of the stability (Theorem \ref{thm:cond-stab}), cf. Figure \ref{fig:2D:err} (c).
Interestingly, when $T=10^{-4}$, we still observe the convergence of optimal order $O(\delta^\frac13)$ for $\alpha=0.25$. This might be due to the faster decay of $\partial_t^\alpha u(t)$ for small $\alpha$ when $t$ is close to zero.
The exact reason still awaits further theoretical investigation.
See also Figure \ref{fig:2D:sol}
for an illustration of the reconstructions at different noise levels.
\begin{figure}[htbp]
\centering
\begin{subfigure}{.33\textwidth}
\centering
\includegraphics[scale=0.35]{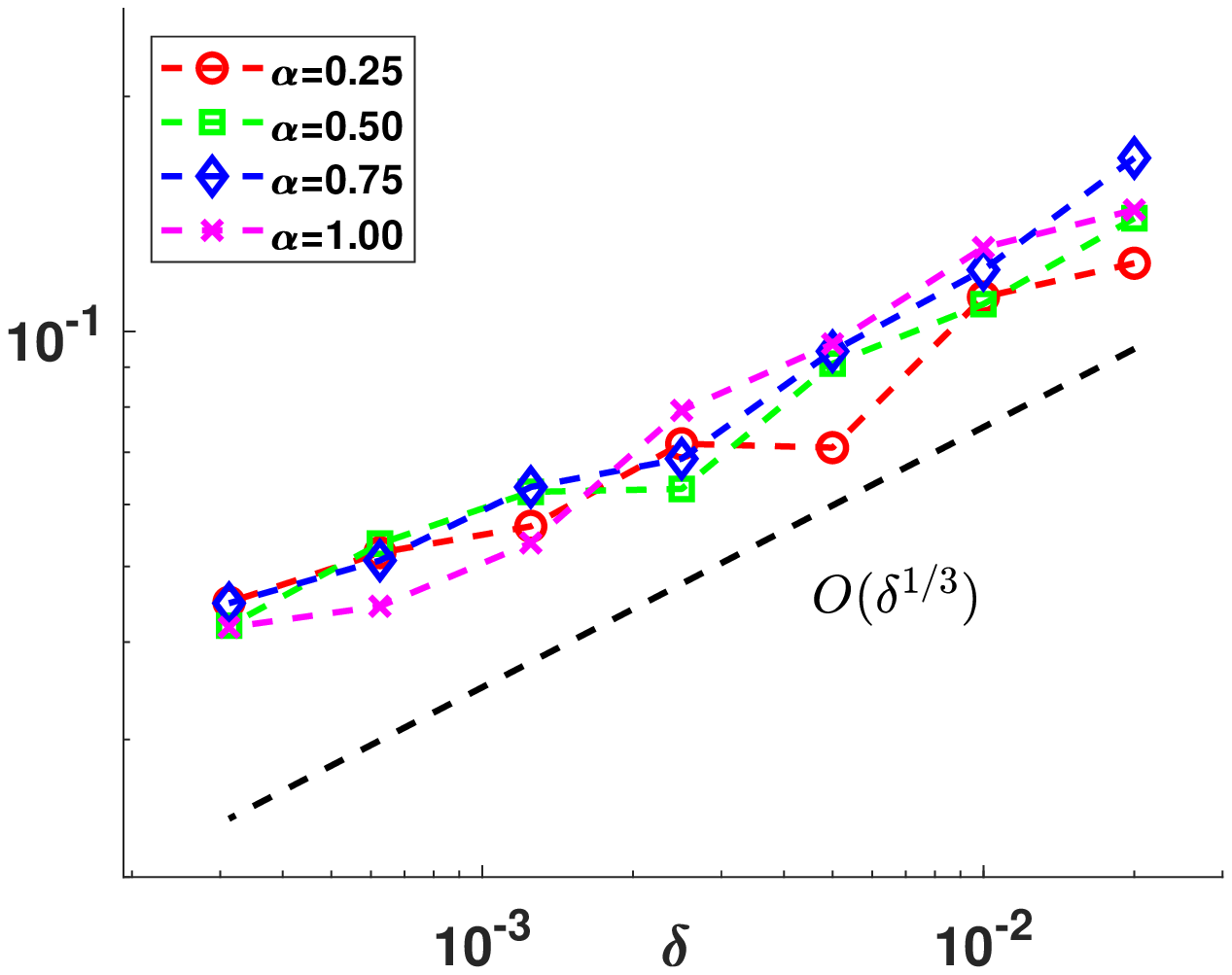}
\caption{$T=1$}
\end{subfigure}%
\begin{subfigure}{.33\textwidth}
\centering
\includegraphics[scale=0.35]{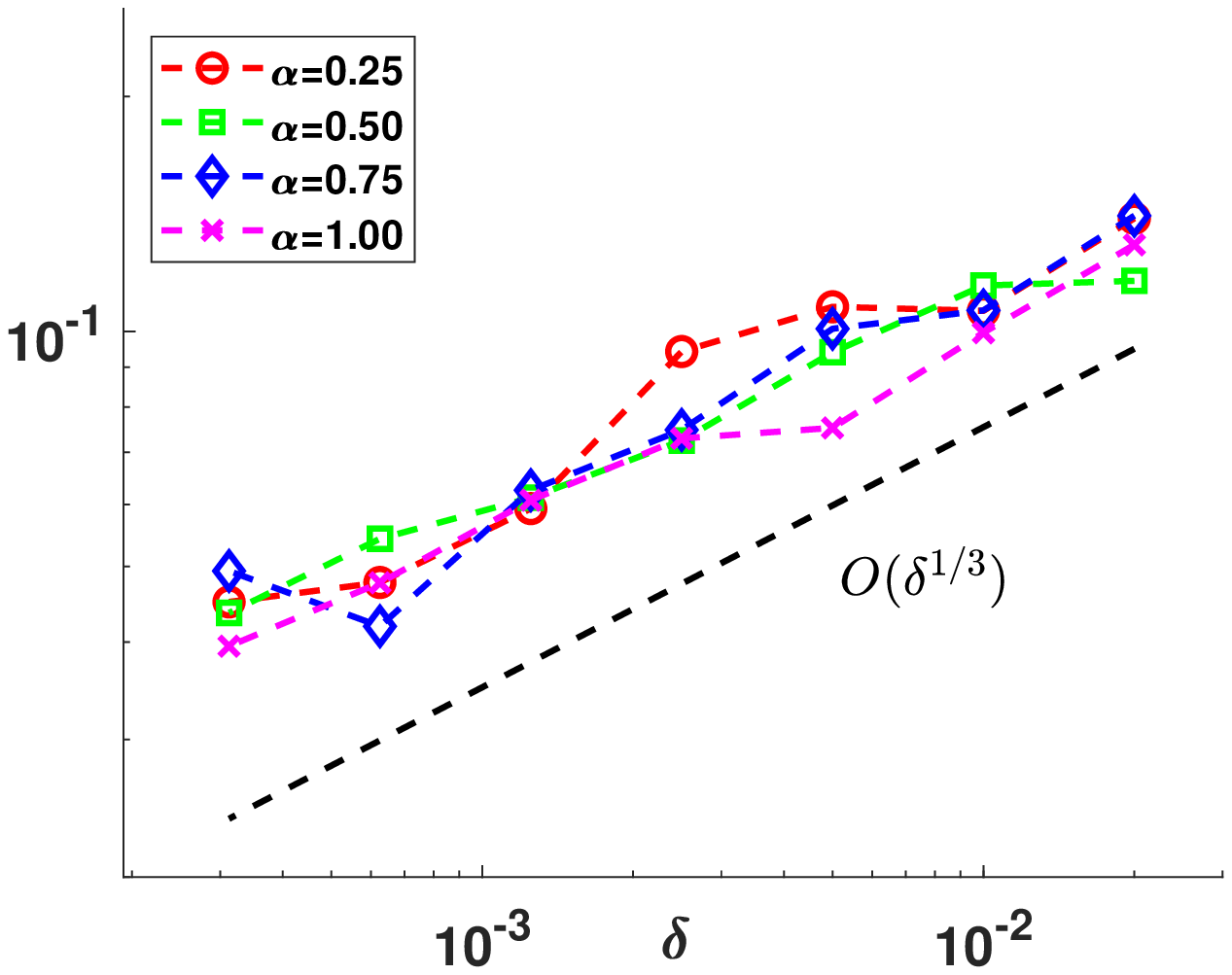}
\caption{$T=5$}
\end{subfigure}%
\begin{subfigure}{.33\textwidth}
\centering
\includegraphics[scale=0.35]{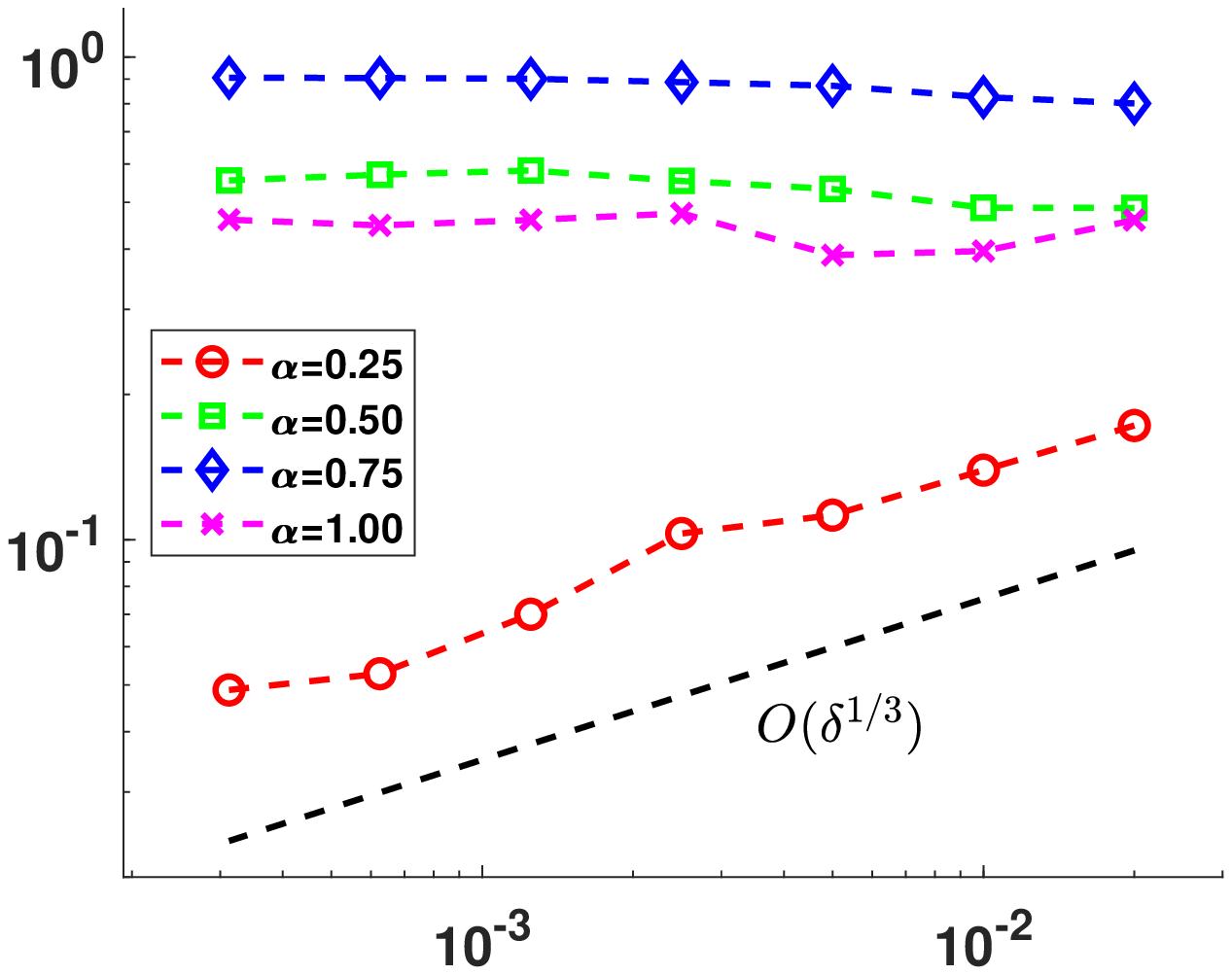}
\caption{$T = 10^{-4}$}
\end{subfigure}%
\caption{Relative error $e_q$ versus noise level $\delta$, where $h= \delta^{1/3}$,
$\tau = \delta^{1/3}/10$ and $\alpha=0.25, \, 0.5, \,0.75, \,1$.}\label{fig:2D:err}
\end{figure}

\begin{figure}[htbp]
\begin{subfigure}{.24\textwidth}
\centering
\includegraphics[scale=0.25]{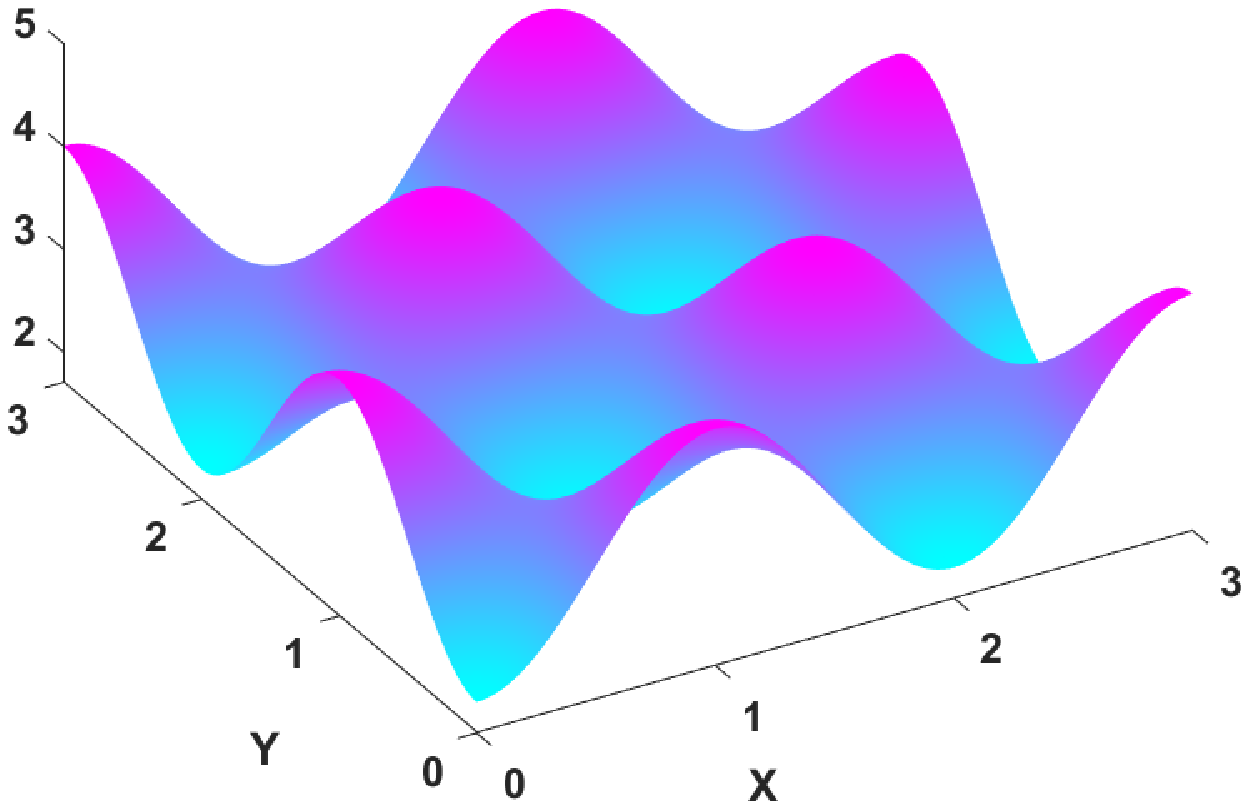}
\end{subfigure}%
\begin{subfigure}{.24\textwidth}
\centering
\includegraphics[scale=0.25]{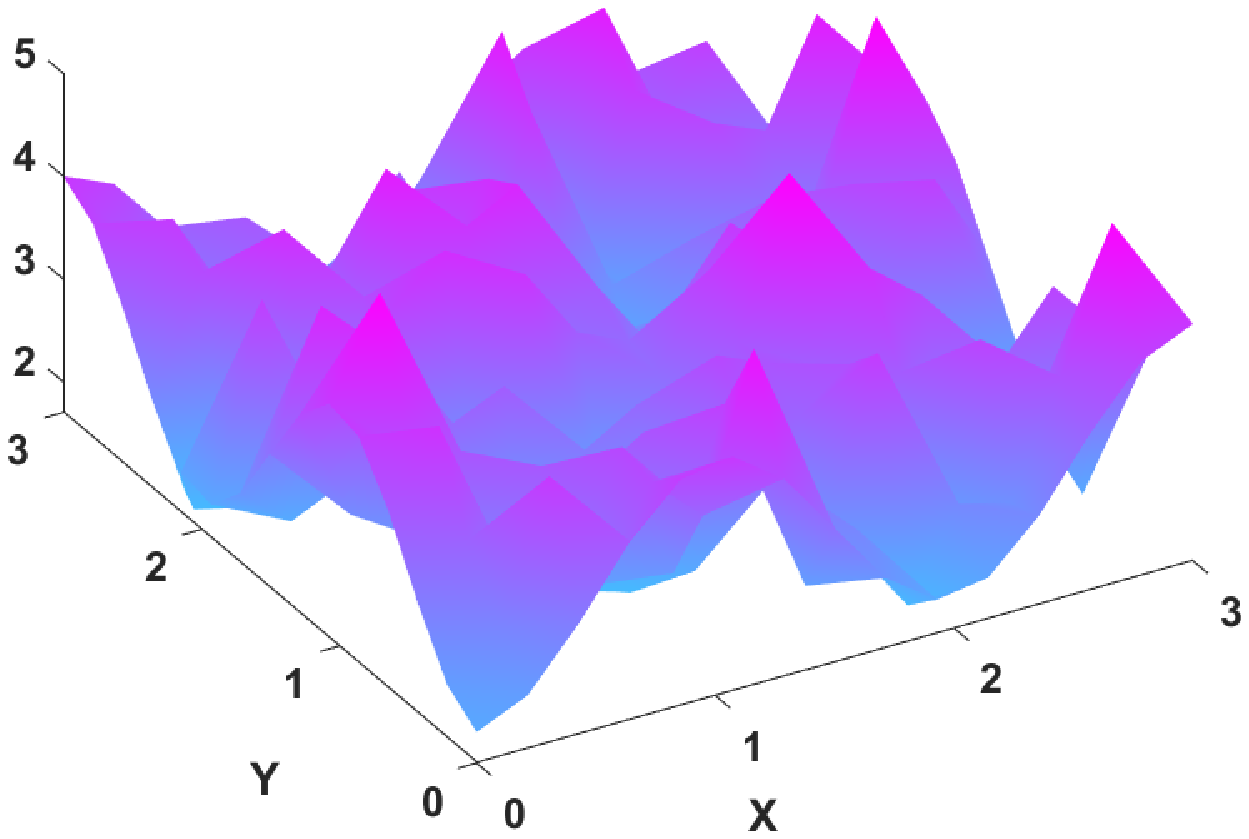}
\end{subfigure}%
\begin{subfigure}{.24\textwidth}
\centering
\includegraphics[scale=0.25]{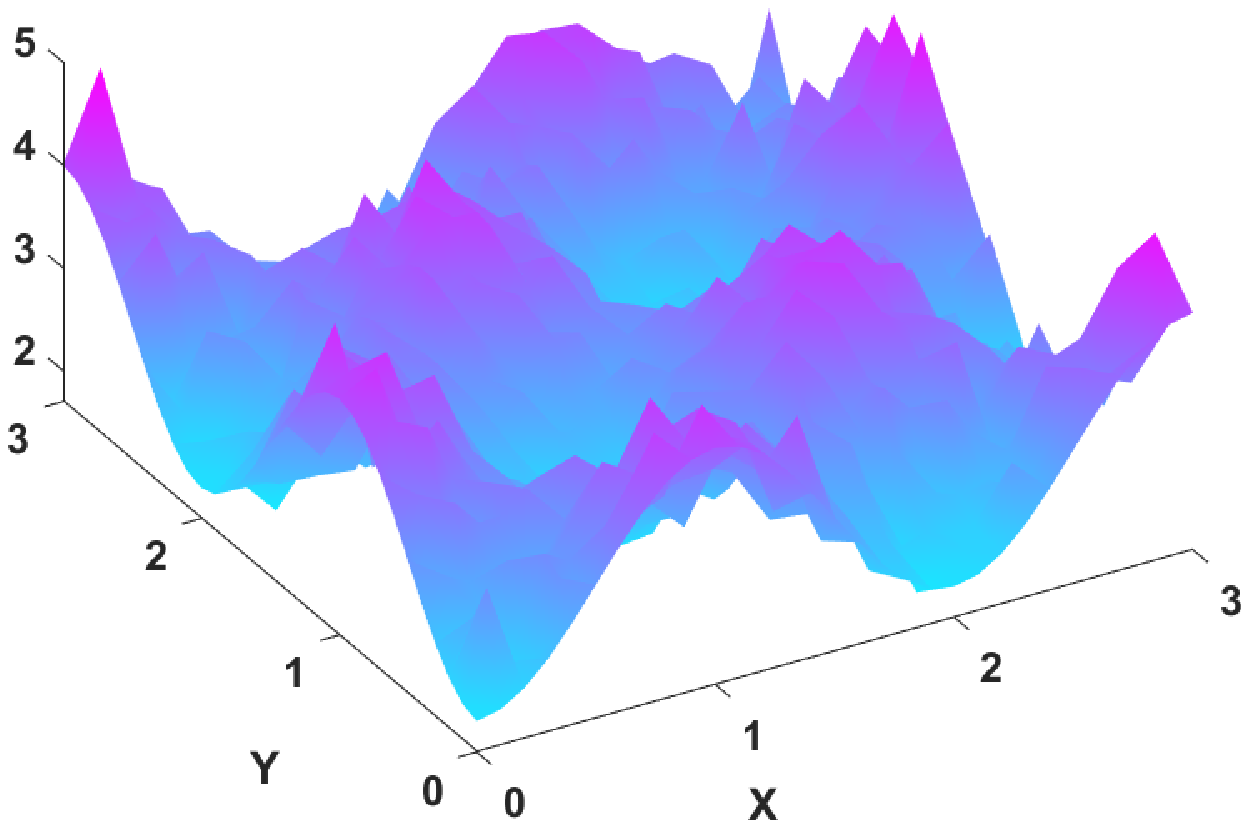}
\end{subfigure}
\begin{subfigure}{.24\textwidth}
\centering
\includegraphics[scale=0.25]{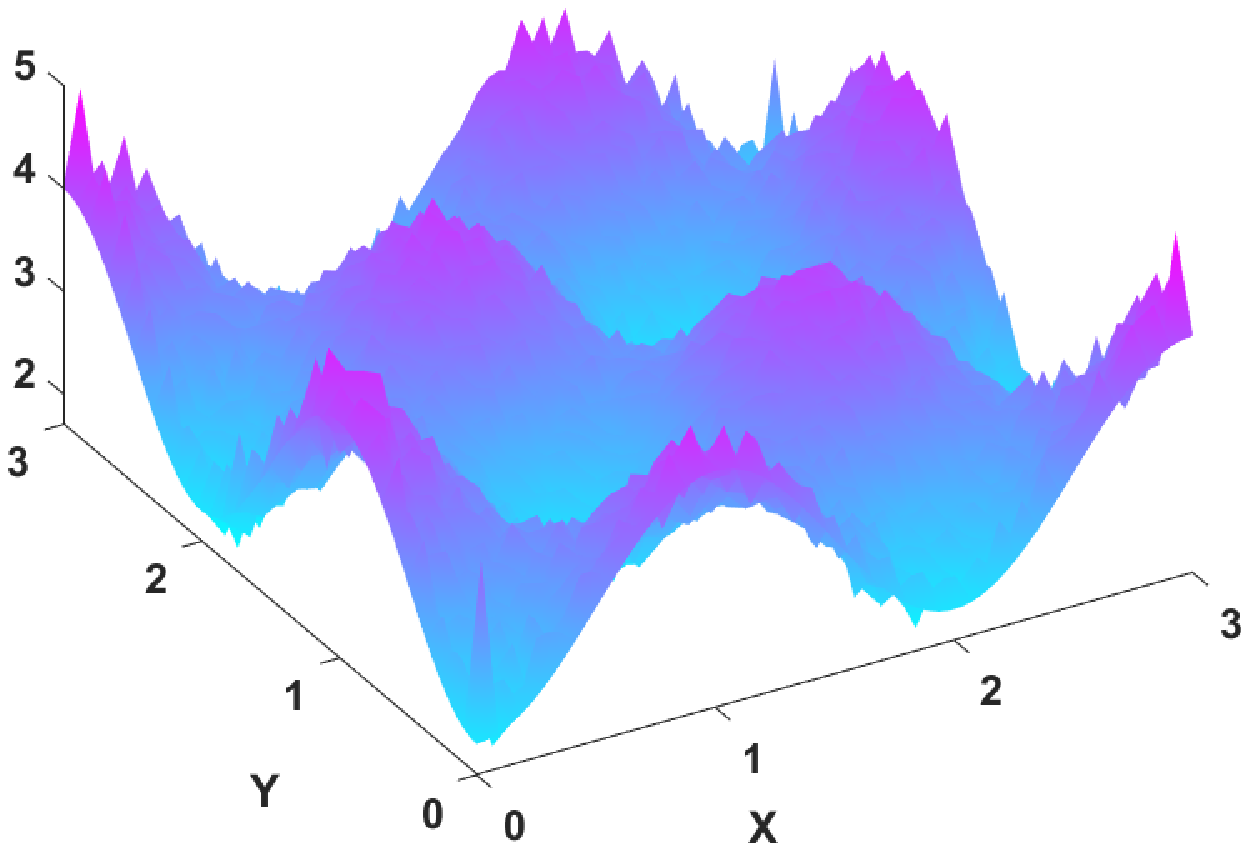}
x
\end{subfigure}
\newline
\raggedleft
\begin{subfigure}{.24\textwidth}
\centering
\includegraphics[scale=0.25]{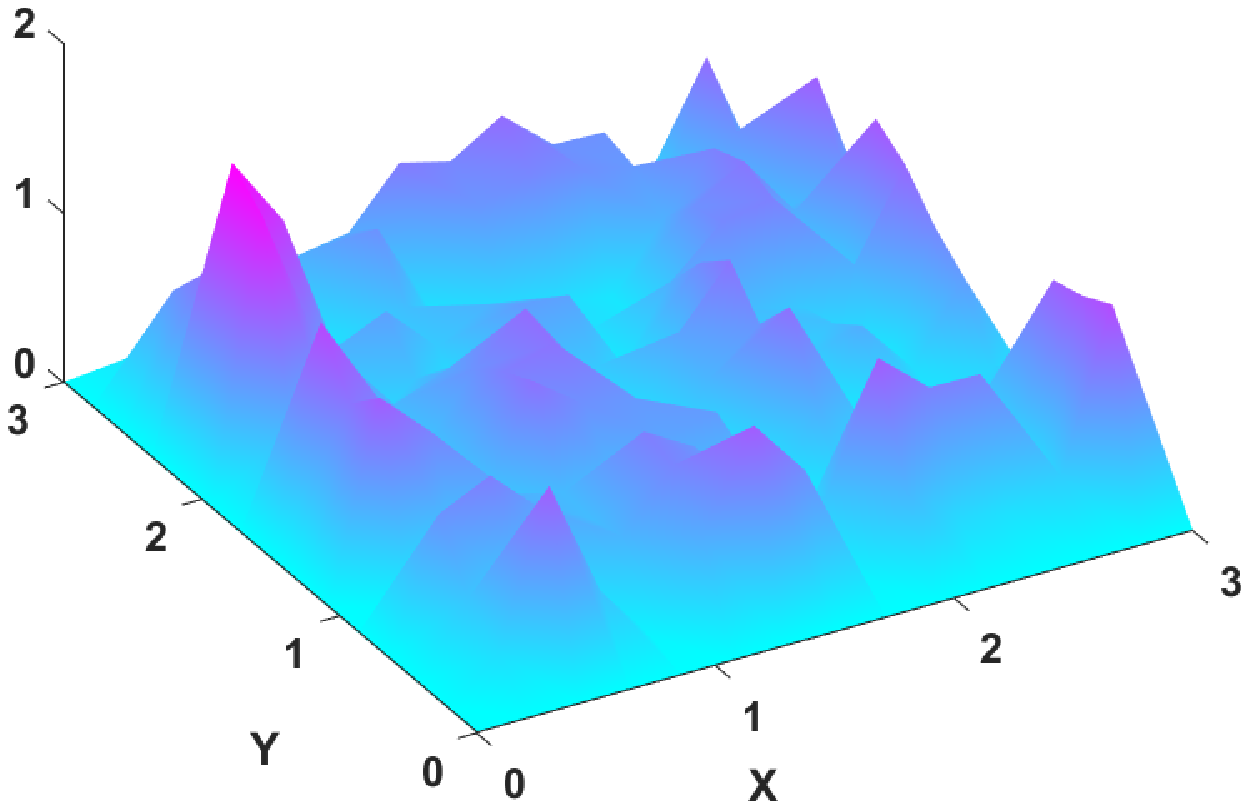}
\caption{$\delta =10^{-2}$}
\end{subfigure}%
\begin{subfigure}{.24\textwidth}
\centering
\includegraphics[scale=0.25]{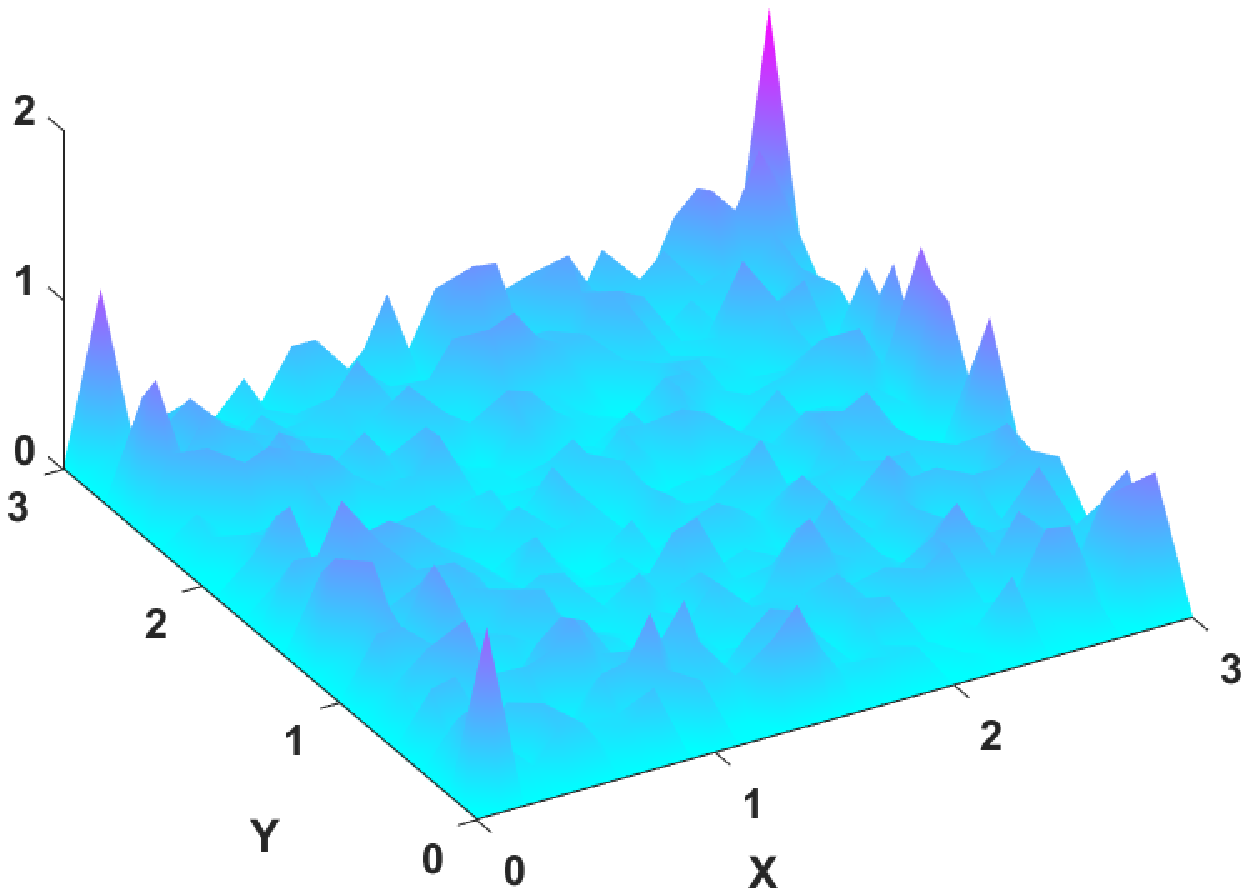}
\caption{$\delta =10^{-3}$}
\end{subfigure}
\begin{subfigure}{.24\textwidth}
\centering
\includegraphics[scale=0.25]{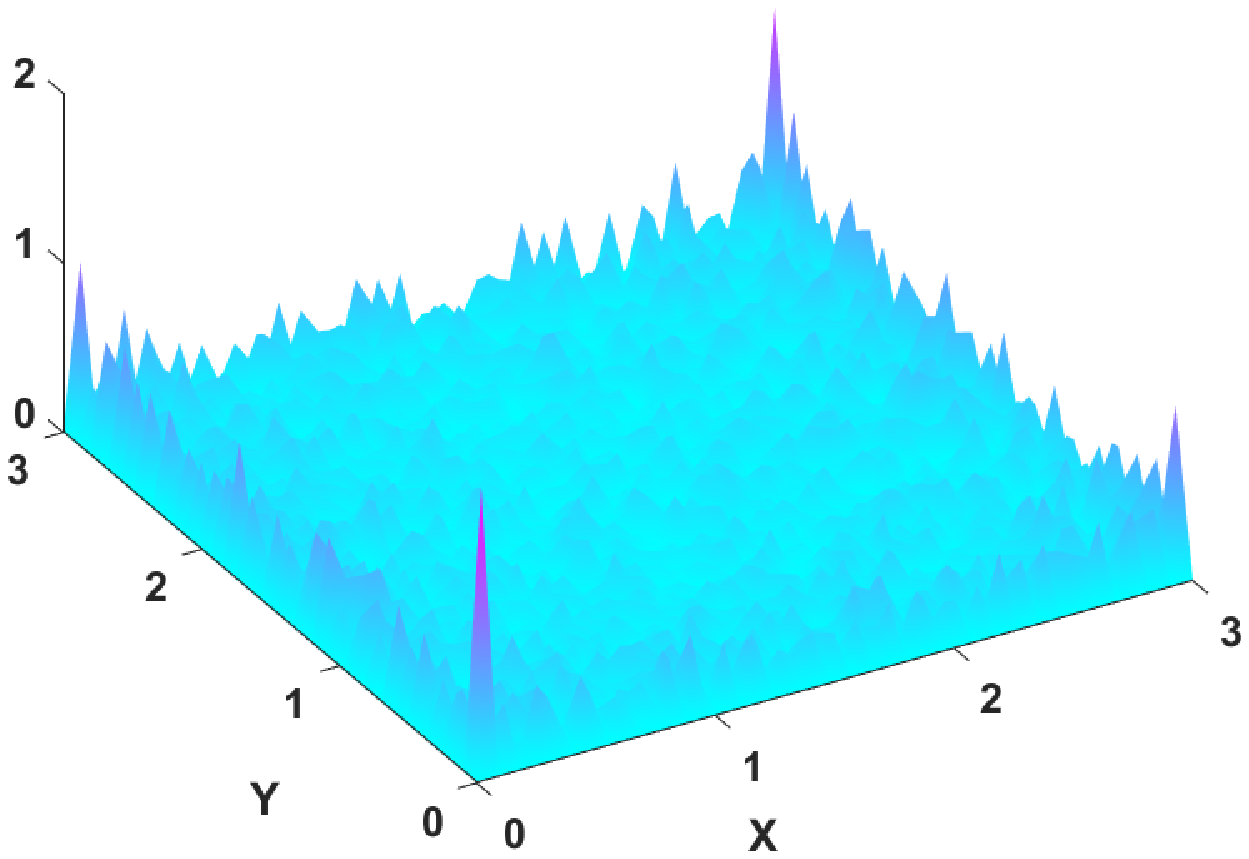}
\caption{$\delta= 10^{-4}$}
\end{subfigure}
\caption{Top left: Exact potential $q^\dag$. The other three columns are profiles of numerical reconstructions $q^*$ and  corresponding pointwise error $e=|q^*-q^\dag|$, with $T=1$, $\al=0.5$, $h=\delta^\frac 13$ and $\tau = \delta^\frac 13/10$.}
\label{fig:2D:sol}
\end{figure}

\bibliographystyle{abbrv}

\end{document}